\renewcommand{\PrintDOI}[1]{\doi{#1}}
\newtheorem{thm}{Theorem}
\newtheorem{lemma}[thm]{Lemma}
\newtheorem{prop}[thm]{Proposition}
\newtheorem{claim}[thm]{Claim}
\newtheorem{cor}[thm]{Corollary}
\newtheorem{conj}[thm]{Conjecture}
\newtheorem{prob}[thm]{Problem}
\newtheorem{qu}[thm]{Question}
\theoremstyle{definition}
\newtheorem{defn}[thm]{Definition}
\let\polishlcross=\l
\def\l{\ifmmode\ell\else\polishlcross\fi}
\def\moverlay{\mathpalette\mov@rlay}
\def\mov@rlay#1#2{\leavevmode\vtop{    \baselineskip\z@skip\lineskiplimit-\maxdimen%
    \ialign{\hfil$\m@th#1##$\hfil\cr#2\crcr}}}
\newcommand{\charfusion}[3][\mathord]{
    #1{\ifx#1\mathop\vphantom{#2}\fi
        \mathpalette\mov@rlay{#2\cr#3}
      }
    \ifx#1\mathop\expandafter\displaylimits\fi}
\DeclareFontFamily{U}  {MnSymbolC}{}
\DeclareSymbolFont{MnSyC}         {U}  {MnSymbolC}{m}{n}
\DeclareFontShape{U}{MnSymbolC}{m}{n}{%
    <-6>  MnSymbolC5
   <6-7>  MnSymbolC6
   <7-8>  MnSymbolC7
   <8-9>  MnSymbolC8
   <9-10> MnSymbolC9
  <10-12> MnSymbolC10
  <12->   MnSymbolC12}{}
\DeclareMathSymbol{\powerset}{\mathord}{MnSyC}{180}
\def\namedlabel#1#2{\begingroup
    #2%
    \def\@currentlabel{#2}%
    \phantomsection\label{#1}\endgroup
}
\numberwithin{thm}{section}
\setlist[itemize]{leftmargin=1cm}
\setlist[enumerate]{leftmargin=1cm}
\renewcommand{\leq}{\leqslant}
\renewcommand{\geq}{\geqslant}
\renewcommand{\le}{\leqslant}
\renewcommand{\ge}{\geqslant}
\renewcommand{\to}{\rightarrow}
\newcommand{\Var}{\operatorname{Var}}
\newcommand{\ex}{\operatorname{ex}}
\let\epsilon\varepsilon%
\def\cH{\mathcal{H}}
\def\Ex{\mathbb{E}}
\def\N{\mathbb{N}}
\def\1{\mathbbm{1}}
\def\<{\langle}
\def\>{\rangle}
\def\ds{\displaystyle}
\def\circ{C^\circlearrowright}
\def\ex{\mathop{\text{\rm ex}}\nolimits}
\let\theta=\vartheta%
\let\rho=\varrho%
\let\phi=\varphi%
\newcommand{\ccol}[1]{\;\eqmakebox[T1]{#1}}
\begin{document}
%\linenumbers%
\onehalfspace%
\shortdate%
\yyyymmdddate%
\settimeformat{ampmtime}
\date{\today, \currenttime}
\footskip=28pt

\title{Counting restricted orientations of random graphs}

\author[M.~Collares]{Maur\'{\i}cio Collares}

\address{Departamento de Matem\'atica, Universidade Federal de Minas Gerais, Belo Horizonte, MG, Brazil}
\email{mauricio@collares.org}

\author[Y.~Kohayakawa]{Yoshiharu Kohayakawa}

\address{Instituto de Matem\'atica e Estat\'{\i}stica, Universidade de
   S\~ao Paulo, S\~ao Paulo, SP, Brazil}
\email{yoshi@ime.usp.br}

\author[R.~Morris]{Robert Morris}

\address{IMPA, Estrada Dona Castorina 110, Jardim Bot\^anico, Rio de Janeiro, RJ, Brazil}
\email{rob@impa.br}

\author[G.~O.~Mota]{Guilherme Oliveira Mota}

\address{Centro de Matem\'atica, Computa\c c\~ao e Cogni\c c\~ao, Universidade Federal do ABC, Santo Andr\'e, SP, Brazil}
\email{g.mota@ufabc.edu.br}

\thanks{The first author was partially supported by CNPq
  (Proc.~158982/2014-2), the second author by FAPESP
  (Proc.~2013/03447-6) and CNPq (Proc.~311412/2018-1, 423833/2018-9),
  the third author by CNPq (Proc.~303275/2013-8) and FAPERJ
  (Proc.~201.598/2014) and the fourth author by FAPESP
  (Proc.~2013/03447-6, 2018/04876-1) and CNPq (Proc.~304733/2017-2).
  This study was financed in part by the Coordena\c{c}\~ao de
  Aperfei\c{c}oamento de Pessoal de N\'ivel Superior, Brasil (CAPES),
  Finance Code~001.} 

\begin{abstract}
We count orientations of $G(n,p)$ avoiding certain classes of oriented graphs. In particular, we study $T_r(n,p)$, the number of orientations of the binomial random graph $G(n,p)$ in which every copy of $K_r$ is transitive, and $S_r(n,p)$, the number of orientations of $G(n,p)$ containing no strongly connected copy of $K_r$. We give the correct order of growth of $\log T_r(n,p)$ and $\log S_r(n,p)$ up to polylogarithmic factors; for orientations with no cyclic triangle, this significantly improves a result of Allen, Kohayakawa, Mota and Parente. We also discuss the problem for a single forbidden oriented graph, and state a number of open problems and conjectures.
\end{abstract}

\maketitle

\section{Introduction}

An orientation $\vec{H}$ of a graph $H$ is an oriented graph obtained by assigning an orientation to every edge of $H$. Over 40 years ago, Erd\H{o}s~\cite{Er74} initiated the study of $D(G,\vec{H})$, the number of $\vec{H}$-free orientations of a graph~$G$, and in particular posed the problem of determining $D(n,\vec H) := \max\big\{D(G,\vec H) : |V(G)| =n \big\}$. For tournaments, this problem was resolved by Alon and Yuster~\cite{AlYu06}, who proved that~$D{(n,T_k) = 2^{\ex(n,K_k)}}$ holds for any tournament~$T_k$, and all sufficiently large $n \in \N$.

Recently, Allen, Kohayakawa, Mota and Parente~\cite{AlKoMoPa14} introduced a related problem in the context of random graphs: that of determining the typical number of $\circ_{r}$-free orientations of the Erd\H{o}s--R\'enyi random graph~$G(n,p)$, where $\circ_{r}$ is the directed cycle of length~$r$. The main result of~\cite{AlKoMoPa14} is as follows.

\begin{thm}\label{thm:AKMP}
Let~$r \geq 3$. Then, with high probability as $n \to \infty$,
\[\log_2 D\big(G(n,p), \circ_{r} \big) = \left\{
\begin{array}{cl}
\big(1 + o(1) \big) p \ds{\binom{n}{2}} & \text{ if } \, n^{-2} \ll p \ll n^{-(r-2)/(r-1)},\nonumber\smallskip  \\
o\big(p n^2 \big) & \text{ if } \, p \gg n^{-(r-2)/(r-1)}.\nonumber
\end{array}
\right.\]
\end{thm}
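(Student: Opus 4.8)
The plan is to handle the two ranges of $p$ separately, in each case getting the easy inequality for free from the deterministic bound $D(G,\circ_{r})\le 2^{e(G)}$ together with the concentration $e\big(G(n,p)\big)=(1+o(1))p\binom{n}{2}$ (valid whp since $p\gg n^{-2}$). This already gives $\log_2 D\big(G(n,p),\circ_{r}\big)\le(1+o(1))p\binom{n}{2}$ throughout; it remains to prove a matching lower bound when $p\ll n^{-(r-2)/(r-1)}$, and to prove the much stronger bound $\log_2 D\big(G(n,p),\circ_{r}\big)=o(pn^{2})$ when $p\gg n^{-(r-2)/(r-1)}$.

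\emph{The lower bound when $p\ll n^{-(r-2)/(r-1)}$.} Here the relevant fact is that copies of $C_{r}$ are scarce: the expected number of copies of $C_{r}$ in $G(n,p)$ is $\Theta(n^{r}p^{r})$, and $n^{r}p^{r}=o(n^{2}p)$ precisely in this range, so by Markov's inequality and the concentration of $e(G)$, whp the number $N$ of copies of $C_{r}$ satisfies $rN=o(e(G))$. Let $E'$ be the set of edges of $G$ lying in no copy of $C_{r}$, so that $|E'|\ge e(G)-rN=(1-o(1))e(G)$. Fix a linear order on $V(G)$ and, for each of the $2^{|E'|}$ orientations of $E'$, orient every edge of $E(G)\setminus E'$ forward with respect to this order. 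A directed copy of $C_{r}$ in such an orientation would have all of its edges inside a copy of $C_{r}$, hence all in $E(G)\setminus E'$, hence oriented consistently with a linear order, which is impossible. Thus $D\big(G(n,p),\circ_{r}\big)\ge 2^{|E'|}=2^{(1-o(1))p\binom{n}{2}}$ whp, as required.

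\emph{The upper bound when $p\gg n^{-(r-2)/(r-1)}$.} This is the heart of the matter: one must show that $\circ_{r}$-free orientations are exponentially rare on the scale of $e(G)$. The exponent $(r-2)/(r-1)$ is the inverse of the $2$-density of $C_{r}$, i.e.\ the random-Tur\'an threshold: above it, $G(n,p)$ is \emph{robustly} full of copies of $C_{r}$ — each edge lies in $\Theta(n^{r-2}p^{r-1})\gg1$ of them, and the random and deterministic Tur\'an densities for $C_{r}$ agree whp (a result in the spirit of Kohayakawa--\L{}uczak--R\"odl, Conlon--Gowers, Schacht, and Balogh--Morris--Samotij). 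The plan is to convert this robustness into a counting bound. Form the hypergraph $\cH$ on the arcs of $G$ whose edges are the arc-sets of directed copies of $C_{r}$, so that $\circ_{r}$-free orientations are exactly its independent transversals; a suitable supersaturation statement for $C_{r}$ in $G(n,p)$ should then, via the hypergraph container method (iterating if necessary), cover all $\circ_{r}$-free orientations by $2^{o(e(G))}$ containers, each admitting at most $2^{o(e(G))}$ orientations, whence $D\big(G(n,p),\circ_{r}\big)\le 2^{o(e(G))}=2^{o(pn^{2})}$.

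\emph{The main obstacle.} Controlling \emph{all} $\circ_{r}$-free orientations at once is genuinely delicate, because forbidding a single oriented graph is such a weak constraint: the orientation of an individual edge is almost never forced by the rest, and for odd $r$ the graph $G(n,p)$ even contains $C_{r}$-free subgraphs with $(1/2+o(1))e(G)$ edges, every orientation of which is $\circ_{r}$-free within that subgraph (though of course most such orientations do not extend to $\circ_{r}$-free orientations of $G$). Consequently the bound cannot come from any purely local or subgraph-by-subgraph argument; the supersaturation input has to be sensitive to the orientation, and one must exploit the global interaction between a near-extremal $C_{r}$-free substructure and the remaining edges — this is where essentially all the quantitative work lies. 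By comparison, the ancillary ingredients (concentration of $e(G)$ and of the number of copies of $C_{r}$, and the stability of $C_{r}$-supersaturation under deleting $o(e(G))$ edges) are routine moment estimates.
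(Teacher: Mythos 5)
This theorem is not proved in the present paper: it is quoted from Allen, Kohayakawa, Mota and Parente~\cite{AlKoMoPa14} as the benchmark that Theorem~\ref{thm:triangles} improves, so there is no in-paper argument for you to match. Your first two parts are nonetheless correct and elementary: the deterministic bound $D(G,\circ_r)\le 2^{e(G)}$ together with concentration of $e(G(n,p))$ gives the upper bound everywhere, and in the subcritical range $p\ll n^{-(r-2)/(r-1)}$ the expected number of copies of $C_r$ is $\Theta(n^rp^r)=o(pn^2)$, so orienting the $(1-o(1))e(G)$ edges lying in no copy of $C_r$ freely and the rest forward along a fixed linear order yields $2^{(1-o(1))p\binom{n}{2}}$ valid orientations, exactly as you argue (a directed $C_r$ would have all its edges forward, which is impossible).

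The third part is where your write-up has a genuine gap, and you more or less say so yourself. The claim that ``a suitable supersaturation statement should then, via the hypergraph container method, cover all $\circ_r$-free orientations by $2^{o(e(G))}$ containers, each admitting at most $2^{o(e(G))}$ orientations'' is not a proof but a description of the theorem one would need. The obstruction you flag is real and is precisely the crux: in the $r$-uniform hypergraph on the $2e(G)$ arcs whose edges are directed copies of $C_r$, there exist independent sets of size $e(G)$ (all forward arcs, say), so no density-based supersaturation applies; the required statement must be relative to the transversal structure (one arc per edge of $G$), and formulating and proving that relative supersaturation-plus-container step is essentially the whole content of the upper bound in~\cite{AlKoMoPa14}. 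Until that step is supplied, the supercritical $o(pn^2)$ bound is unproved. For what it is worth, the present paper proves the supercritical bound by a completely different and elementary route for part of the range: Lemma~\ref{lemma:cycles:upper} (via Gallai--Milgram) shows every $\circ_r$-free orientation of $G$ is determined by $O(n\cdot\alpha(G))$ of its arcs, giving $\log D(G(n,p),\circ_r)=\widetilde O(n/p)$ in Theorem~\ref{thm:cycles:upper}; this yields $o(pn^2)$ once $p\gg n^{-1/2}$, which covers the whole supercritical regime for $r=3$ but leaves the window $n^{-(r-2)/(r-1)}\ll p\ll n^{-1/2}$ open for $r\ge 4$, where the container/regularity argument of~\cite{AlKoMoPa14} remains necessary.
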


Our first theorem provides the following improvement in the case $r = 3$; the $\widetilde{\Theta}(\cdot)$-notation indicates upper and lower bounds that differ by a polylogarithmic factor.

\begin{thm}\label{thm:triangles}
The following bounds hold with high probability as $n \to \infty$.
\begin{numcases}{\log_2 D\big(G(n,p), \circ_{3} \big) =}
  \ccol{$\big(1 + o(1) \big) p \ds{\binom{n}{2}}$}
  &if \, $n^{-2} \ll p \ll n^{-1/2}$,\nonumber\smallskip \\
  \ccol{$\widetilde{\Theta}\big(n / p \big)$}
  &if \, $p \gg n^{-1/2}$.\label{eq:triangles}
\end{numcases}
\end{thm}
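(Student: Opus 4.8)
The plan is to prove the two regimes separately, since they have quite different flavours. For the first regime, $n^{-2} \ll p \ll n^{-1/2}$, the lower bound $\log_2 D(G(n,p),\circ_3) \geq (1-o(1))p\binom n2$ is immediate from \emph{any} orientation being counted against the trivial upper bound $2^{e(G(n,p))} = 2^{(1+o(1))p\binom n2}$ with high probability; in fact all we need is an almost-spanning forest, or simply the observation that a positive proportion of edges lie in no triangle. The real content here is already contained in Theorem~\ref{thm:AKMP}, so this case requires essentially no new work: one simply reads off the first line of that theorem (whose proof already handles the subcritical range $p \ll n^{-1/2} = n^{-(r-2)/(r-1)}$ for $r=3$).

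The new regime is $p \gg n^{-1/2}$, where we must show $\log_2 D(G(n,p),\circ_3) = \widetilde{\Theta}(n/p)$. For the \textbf{upper bound}, the key structural fact is that an orientation of a graph $G$ has no directed triangle if and only if it is obtained by taking a transitive-triangle (i.e.\ acyclic on every triangle) orientation. I would first reduce counting such orientations to a question about the triangle hypergraph: contract the problem to the $2$-core, and observe that in $G(n,p)$ with $p \gg n^{-1/2}$, whp every edge lies in many triangles, so the constraints propagate. The heart of the upper bound is to find a small set $W$ of vertices (or edges) such that the orientation of $G$ is determined by its restriction to $W$ together with polylogarithmically many bits per vertex outside $W$; concretely I expect to build a ``certificate'' of size $\widetilde{O}(n/p)$ by a greedy/absorption argument — order the vertices $v_1,\dots,v_n$ and show that once the orientation on $\{v_1,\dots,v_{i-1}\}$ is fixed, the back-edges from $v_i$ are forced up to $\mathrm{polylog}(n)$ choices because $v_i$ typically has $\Omega(pn)$ earlier neighbours spanning $\Omega(p^3 n^2)$ triangles, and the transitivity constraints on these triangles collapse the number of consistent orientations. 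Summing $\log_2$ of the number of choices over all $i$ gives $\widetilde O(n/p)$, provided the ``exceptional'' vertices with too few earlier triangle-neighbours contribute only $O(\log n)$ bits each and there are $O(n/p)$ of them; this last point is where I would invoke a careful first-moment estimate on $G(n,p)$.

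For the \textbf{lower bound}, I would exhibit $2^{\widetilde\Omega(n/p)}$ triangle-transitive orientations explicitly. The natural construction is to fix a linear order on $V(G)$ and orient every edge from smaller to larger endpoint — this is automatically acyclic hence triangle-transitive — and then count how many essentially different such orientations $G(n,p)$ admits, or rather, flip the orientations inside a sparse ``independent-ish'' structure that meets no triangle. Precisely: whp $G(n,p)$ with $p \gg n^{-1/2}$ contains a set $M$ of $\widetilde\Omega(n/p)$ edges no two of which lie in a common triangle and such that each is in no triangle at all after deletion of the others — for instance a large induced matching, or a large matching in the ``triangle-free part'' of the edge set, whose existence follows from a second-moment / deletion argument since the expected number of triangle-edges is $\Theta(p^3 n^3) \ll p n^2$ when $p \ll 1$, wait—this needs $p^3 n^3 \ll pn^2$, i.e.\ $p \ll n^{-1/2}$, which fails here, so instead one must work harder: take a random subset and use that the triangle hypergraph restricted to a random $\widetilde\Theta(n/p)$-subset of edges is whp a matching by a union bound over the $\widetilde O((n/p) \cdot p^3 n^2 / (\text{density})) $ potential conflicts. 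Given such an $M$, each of the $2^{|M|}$ reorientations of $M$ (keeping the acyclic orientation elsewhere) yields a distinct triangle-transitive orientation, giving the matching lower bound.

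The \textbf{main obstacle} is the upper bound, specifically controlling the exceptional vertices and showing the entropy-compression/certificate argument genuinely loses only polylogarithmic factors per vertex rather than, say, a constant-power-of-$p$ factor; equivalently, proving that the triangle constraint hypergraph of $G(n,p)$ is, whp, ``rigid enough'' that the number of proper ($\circ_3$-free) orientations is $2^{\widetilde O(n/p)}$ and not $2^{\Theta(pn^2)}$. I expect this to require a somewhat delicate iterative exposure of $G(n,p)$ together with a sprinkling argument to guarantee that every not-yet-determined edge acquires a forcing triangle, and it is here that the polylogarithmic slack in $\widetilde\Theta$ will be consumed.
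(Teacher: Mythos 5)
Your treatment of the regime $n^{-2}\ll p\ll n^{-1/2}$ is fine: this case is inherited directly from Theorem~\ref{thm:AKMP}, exactly as you say. The issues are in the regime $p\gg n^{-1/2}$, and both directions of your argument have real gaps.

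\textbf{Upper bound.} Your general strategy (a vertex-by-vertex certificate whose total length is $\widetilde O(n/p)$) is the right one, but you leave the crux open and explicitly flag it as the ``main obstacle''. The missing idea is a short, purely deterministic observation, which is the entire content of Lemma~\ref{lemma:triangles:upper}: in any $\circ_3$-free orientation, when you reveal a new vertex $v$, the set of out-edges $(v,w)$ that you must record explicitly (because they cannot be deduced from earlier data) has endpoints forming an \emph{independent set} in $G$. Indeed, if $(v,u)$ and $(v,w)$ were both recorded and $uw\in E(G)$, then the (already-determined) orientation of $uw$ together with the orientation of $vu$ forces the orientation of $vw$ via the transitive-triangle constraint, so $(v,w)$ was redundant. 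Hence at most $\alpha(G)$ out-edges and $\alpha(G)$ in-edges per vertex need to be written down, giving a certificate of size at most $2n\alpha(G)$ and the bound $D(G,\circ_3)\le\sum_{i\le 2n\alpha(G)}\binom{e(G)}{i}2^i$. This holds \emph{deterministically} for every graph; one then plugs in $\alpha(G(n,p))=O((\log n)/p)$ whp. No exceptional-vertex analysis, iterative exposure of $G(n,p)$, or sprinkling is required; the polylog slack comes only from $\alpha(G(n,p))$ and from the binomial sum.

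\textbf{Lower bound.} Your construction does not work as described, and the fix you propose does not fix the actual problem. You correctly note that you cannot find $\widetilde\Omega(n/p)$ edges lying in no triangle when $p\gg n^{-1/2}$, but the proposed remedy --- requiring only that the triangle hypergraph \emph{restricted to} $M$ be a matching --- is not sufficient. The dangerous triangles have exactly \emph{one} edge in $M$ and two edges outside $M$: if $uv\in M$ with $u<v$ is reversed against the acyclic base orientation, then every $w$ with $u<w<v$ and $uw,wv\in E(G)$ yields a directed triangle $u\to w\to v\to u$. In this regime every edge lies in $\Omega(p^2n)$ triangles, so you cannot avoid these by a union bound over pairs in $M$. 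The construction the paper uses (Proposition~\ref{lower:triangles}, and sketched in Section~\ref{sec:lower}) is different in a crucial way: fix a linear order, pick $a\approx p^{-2}/\omega$, and orient every edge of length $\ge a$ forward. The key observation is that a triangle $u<w<v$ can be non-transitive only if \emph{at least two} of its edges are short (length $<a$), because with at most one short edge the triangle is transitive no matter how the short edge is oriented. The expected number of such ``dangerous'' triangles is $O(p^3a^2n)$, which by the choice of $a$ is much smaller than the number of short edges $\approx pan$; Markov's inequality then gives $\Omega(n/(\omega p))$ short edges lying in no dangerous triangle, and these may be oriented freely. What you missed is that the freely-oriented edges must be \emph{short}, so that any triangle they could corrupt is itself constrained to be small and therefore rare.
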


We will in fact prove a similar upper bound for $D(G(n,p),
\circ_r)$ for all $r \geq 3$ (see Theorem~\ref{thm:cycles:upper}). However, we do not believe that our
bound is sharp when $r \geq 4$; instead, we believe that the natural
generalization of the lower bound construction in Theorem~\ref{thm:triangles} (see Section~\ref{sec:lower}) is sharp
up to polylogarithmic factors.

\begin{conj}\label{conj:cycles}
Let $r \geq 4$. If $p \gg n^{-(r-2)/(r-1)}$, then
\begin{equation*}
\log D\big(G(n,p), \circ_{r} \big) = \widetilde{\Theta}\bigg( \frac{n}{p^{1/(r-2)}} \bigg)
\end{equation*}
with high probability as $n \to \infty$.
\end{conj}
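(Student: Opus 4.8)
We indicate here how one might approach Conjecture~\ref{conj:cycles}, treating the two bounds in turn.

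\emph{The lower bound.} Let $m$ be $p^{-(r-1)/(r-2)}$ divided by a suitable power of $\log n$; since $p \gg n^{-(r-2)/(r-1)}$ we have $m \le n$. Take a uniformly random partition $V(G) = W_1 \dcup \cdots \dcup W_t$ into $t = \lceil n/m \rceil$ parts of size $\approx m$, and fix a linear order $\pi$ on $V(G)$ placing $W_1$ below $W_2$ below $\cdots$ below $W_t$. For each $j$ let $B_j \subseteq E(G[W_j])$ be the set of edges that lie in some copy of $C_r$ inside $G[W_j]$, and put $B = B_1 \cup \cdots \cup B_t$. Orient every edge of $G$ that joins two distinct parts, or lies in $B$, forwards according to $\pi$, and orient each remaining edge — which lies inside a single part and in no copy of $C_r$ inside that part — in either direction, freely. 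Along any directed cycle of the resulting orientation the part index is nondecreasing, hence constant, so such a cycle lies inside one $W_j$ and is a copy of $C_r$ in $G[W_j]$ all of whose edges were oriented forwards by $\pi$, which is impossible; thus every one of these $2^{\sum_j e(G[W_j]) - |B|}$ orientations is $\circ_r$-free. Taking expectations over the random partition, $\Ex\big[\sum_j e(G[W_j])\big] = (1+o(1))\,e(G)\,m/n$ and $\Ex\big[|B|\big] = O\big(e_{C_r}(G)\,(m/n)^{r-1}\big)$, where $e_{C_r}(\cdot)$ counts copies of $C_r$; using the concentration of $e(G)$ around $\tfrac12 pn^2$, the bound $e_{C_r}(G) = \widetilde O(p^r n^r)$ whp (Markov's inequality), Markov's inequality once more for the random partition, and the fact that $p^{r-1}m^{r-2}$ is $o(1)$ with room to spare, one obtains whp a partition with $\sum_j e(G[W_j]) \ge (1-o(1))\tfrac12 pmn$ and $|B| = o(pmn)$. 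Since $\tfrac12 pmn = \widetilde\Theta\big(n/p^{1/(r-2)}\big)$, this gives $\log_2 D\big(G(n,p),\circ_r\big) \ge \widetilde\Omega\big(n/p^{1/(r-2)}\big)$ whp, which is the construction of Section~\ref{sec:lower}.

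\emph{The upper bound.} The real content of the conjecture is the matching bound $\log_2 D\big(G(n,p),\circ_r\big) = \widetilde O\big(n/p^{1/(r-2)}\big)$; this is part of Theorem~\ref{thm:triangles} when $r = 3$, while Theorem~\ref{thm:cycles:upper} gives only a weaker bound for $r \ge 4$. The plan is to run the hypergraph container method on the $r$-uniform conflict hypergraph $\cH$ whose vertex set is the set of $2\,e(G)$ oriented edges of $G$ (two per edge of $G$) and which has one hyperedge for each cyclically oriented copy of $C_r$ in $G$: the $\circ_r$-free orientations of $G$ are exactly the independent sets of $\cH$ containing exactly one orientation of every edge of $G$. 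One would verify the co-degree (``$\Delta$'') hypotheses for $\cH$ relative to $G(n,p)$, obtaining a family $\cC$ of containers with $|\cC| = 2^{\widetilde O(n/p^{1/(r-2)})}$ such that every $\circ_r$-free orientation lies inside some $C \in \cC$, every such $C$ contains at least one orientation of each edge of $G$, and every such $C$ spans only few hyperedges of $\cH$. Since the orientations of $G$ contained in a fixed such $C$ number exactly $2^{|F_C|}$, where $F_C$ is the set of edges of $G$ having both orientations in $C$, the bound reduces to showing $|F_C| = \widetilde O\big(n/p^{1/(r-2)}\big)$ for every container $C$ in the family.

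\emph{The main difficulty.} This last step rests on a supersaturation estimate: whp over $G \sim G(n,p)$, any set $\cD$ of oriented edges that contains at least one orientation of every edge of $G$ and for which the set $F$ of edges having both orientations in $\cD$ satisfies $|F| > C(r)\,\big(n/p^{1/(r-2)}\big)(\log n)^{C(r)}$ must span many hyperedges of $\cH$, with enough ``spread'' to meet the quantitative requirement of the container lemma. Equivalently: if $F \subseteq E(G)$ is not too sparse then, whatever orientation $\sigma$ is fixed on $E(G)\setminus F$, only a vanishing fraction of the $2^{|F|}$ orientations of $F$ fail to complete $\sigma$ to an orientation containing $\circ_r$. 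Two points make this delicate. First, it must be sharp up to the polylogarithmic factor: the block construction above produces, at exactly this threshold, sets $F$ whose underlying graph $(V(G),F)$ is genuinely $C_r$-free, so no extra room is available. Second, it is uniform over the background orientation $\sigma$, and the copies of $\circ_r$ one must exhibit will typically use both freely orientable edges of $F$ and edges already oriented by $\sigma$; in particular there is no reduction to the random Tur\'an theorem for $C_r$, which for odd $r$ is not even of the right order, since $\ex\big(G(n,p),C_r\big) = \Theta(pn^2)$ in that case. Establishing this supersaturation statement for all $r \ge 4$ — in effect, a tight lower bound on the number of copies of $C_r$ ``activated'' by an arbitrary not-too-sparse set of free edges against an arbitrary background orientation — is where I expect the real difficulty to lie, and it appears to require genuinely new ideas beyond those that handle the triangle case.
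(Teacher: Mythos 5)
The statement you were given is Conjecture~\ref{conj:cycles}; the paper does not prove it. Proposition~\ref{lower:cycles} establishes the lower bound, and Theorem~\ref{thm:cycles:upper} gives only the weaker upper bound $\widetilde{O}(n/p)$ for $r\ge 4$; the matching upper bound $\widetilde{O}\big(n/p^{1/(r-2)}\big)$ is open, and you correctly treat it as such rather than claim a proof.

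Your lower bound is correct, but it is a block-partition variant of what the paper does more directly. Proposition~\ref{lower:cycles} fixes a single linear order, sets $a = p^{-(r-1)/(r-2)}/\omega$, orients every edge of length at least $a$ forward, and observes that any potential $\circ_r$ must have all of its vertices within distance $(r-1)a$ of each other, so the expected number of dangerous copies of $C_r$ is $O(p^r a^{r-1} n) \ll pan$; a single application of Markov's inequality then yields $n/(2\omega p^{1/(r-2)})$ freely orientable edges, for any $\omega\gg 1$, with no $\log$ loss and no auxiliary random object. Your random-blocks version reaches the same conclusion with an extra polylogarithmic factor and requires a concentration argument for $\sum_j e(G[W_j])$ over the random partition that you invoke but do not supply; it buys nothing over the short-edge form.

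For the upper bound your roadmap is explicitly speculative, which is the right stance. Two remarks on it. First, the upper bounds the paper actually proves (Theorems~\ref{thm:triangles:upper}, \ref{thm:cliques:upper:top}, \ref{thm:cliques:upper:middle}, \ref{thm:strongly:upper}, \ref{thm:strongly:upper2} and~\ref{thm:cycles:upper}) do not use hypergraph containers: they rest on the elementary encoding argument of Lemma~\ref{lemma:triangles:upper} and its variants, which exhibits a small set of oriented edges forcing the whole orientation. For $\circ_r$ with $r\ge 4$ this argument, run via Gallai--Milgram as in Lemma~\ref{lemma:cycles:upper}, gives a per-vertex cost of order $(r-2)\alpha(G)\approx (\log n)/p$, which is exactly what produces the $\widetilde{O}(n/p)$ bound and where it falls short of $p^{-1/(r-2)}$. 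Second, your diagnosis that the crux is a sharp, orientation-robust supersaturation statement for $\circ_r$, and that it cannot be reduced to the random Tur\'an problem for $C_r$ when $r$ is odd, is sound and consistent with the paper's own discussion around Question~\ref{basic:qu}. But whether the right tool is containers, as you propose, or instead a refinement of the deterministic encoding that replaces $\alpha(G)$ by a quantity of order $p^{-1/(r-2)}\cdot\mathrm{polylog}(n)$ per vertex, is itself part of what is open; the paper's remark after Lemma~\ref{lemma:triangles:upper} about weak saturation hints at the latter route.
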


Note that $D(G, \circ_{3})$ is the number of orientations of $G$ in
which every triangle is transitive, or equivalently, in which no
triangle is strongly connected. This suggests two natural
generalizations of Theorem~\ref{thm:triangles}, which we discuss below.

\subsubsection*{Avoiding non-transitive tournaments} Our first
generalization deals with the number of orientations of $G(n,p)$
avoiding non-transitive tournaments of a given size.

\begin{defn}
Let $T_r(n,p)$ denote the random variable which counts the number of orientations of the random graph $G(n,p)$ in which every copy of $K_r$ is transitively oriented (we will simply say that ``every $K_r$ is transitive'').
\end{defn}

Our next main result generalizes Theorem~\ref{thm:triangles} by determining the typical value of $\log T_r(n, p)$ up to polylogarithmic factors for every $r \ge 3$.

\begin{thm}\label{thm:transitive}
Let $r \geq 3$. The following bounds hold with high probability as $n \to \infty$.
\begin{numcases}{\log_2 T_r(n, p) =}
    \ccol{$\big(1 + o(1) \big) p \ds{\binom{n}{2}}$}
    &if \,$n^{-2} \ll p \ll n^{-2/(r+1)}$,\nonumber\smallskip  \\
    \ccol{$\widetilde{\Theta}\Big(p^{2 - \binom{r}{2}} n^{4-r} \Big)$}
    &if \,$n^{-2/(r+1)} \ll p \ll n^{-2/(r+2)}$,\label{eq:main-2}\medskip \\
    \ccol{$\widetilde{\Theta}\big(n / p \big)$}
    &if \,$p \gg n^{-2/(r+2)}$.\label{eq:main-3}
\end{numcases}
\end{thm}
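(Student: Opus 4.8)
The plan is to prove matching upper and lower bounds (up to polylog factors) in each of the three ranges of~$p$, treating the upper and lower bounds separately.

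For the \emph{lower bound}, the idea is that a large family of orientations can be obtained by first orienting a carefully chosen sparse subgraph $R \subseteq G(n,p)$ ``freely'' (i.e.\ in all $2^{e(R)}$ possible ways) and then orienting the remaining edges in a fixed way that makes every $K_r$ transitive no matter how the edges of~$R$ were oriented. Concretely, one picks a vertex ordering and orients all edges of $G - R$ consistently with it; then a copy of $K_r$ fails to be transitive only if it contains an edge of~$R$ oriented against the ordering. So we want $R$ to be chosen so that \emph{no} copy of $K_r$ in $G(n,p)$ contains an edge of~$R$, while $e(R)$ is as large as possible. In the dense range $p \gg n^{-2/(r+2)}$, every edge lies in many copies of $K_r$, but one can instead take $R$ to be a matching of ``robustly $K_r$-free'' edges, or better, delete a near-minimum set of edges hitting all copies of $K_r$ and let $R$ be an independent-ish remnant; a second-moment / deletion argument should give $e(R) = \widetilde{\Theta}(n/p)$, since the number of copies of $K_r$ is $\Theta(p^{\binom r2} n^r)$ and one expects to need to delete about that many edges' worth, leaving order $n/p$ edges uncovered (this matches the $\circ_3$ construction of Theorem~\ref{thm:triangles} in Section~\ref{sec:lower}). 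In the intermediate range, $K_r$'s are rarer, and the extremal $R$ consists of the edges not covered by any $K_r$ together with a sparse piece inside the $K_r$'s; balancing gives $e(R) = \widetilde{\Theta}(p^{2-\binom r2} n^{4-r})$. In the sparsest range $p \ll n^{-2/(r+1)}$, with high probability $G(n,p)$ contains so few $K_r$'s (in fact $o(pn^2)$ of them, and they are essentially vertex-disjoint or share at most an edge) that almost all edges can be oriented freely, yielding $(1+o(1))p\binom n2$.

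For the \emph{upper bound}, the key tool is the observation that once we condition on an orientation of $G(n,p)$ in which every $K_r$ is transitive, the transitivity constraints on overlapping copies of $K_r$ propagate and force many edges. The cleanest way to capture this is via a \emph{container-} or \emph{entropy-compression}-type argument: one shows that any valid orientation is determined by specifying the orientation of a small ``core'' set of edges — namely a set $S$ of edges meeting every $K_r$ of $G(n,p)$, together with a bounded amount of extra data — because outside the $K_r$'s the orientation is arbitrary but the constraint structure inside the union of $K_r$'s has few ``free'' bits. Thus $\log_2 T_r(n,p) \le e(G(n,p)) - e\big(\bigcup K_r\big) + (\text{free bits inside } \bigcup K_r) + o(\cdot)$. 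Using that $e(G(n,p)) = (1+o(1))p\binom n2$ and computing, with high probability, the number of edges lying in at least one $K_r$ (which is $\widetilde{\Theta}(p^{\binom r2}n^r)$ when this is $\ll pn^2$ and is $(1-o(1))p\binom n2$ when $p \gg n^{-2/(r+2)}$), one obtains the first-line bound immediately in the sparse range. For the denser ranges one must additionally bound the number of free bits inside the $K_r$-rich part: here I would analyse the ``link'' of a typical edge or vertex, showing that if an edge lies in many copies of $K_r$ then the transitivity constraints pin down its orientation given a small separating set, and sum these local savings. This should be done by a greedy/iterative peeling of edges that are forced once enough of their neighbourhood is revealed, controlling the number of ``unforced'' edges by the same quantities $n/p$ and $p^{2-\binom r2}n^{4-r}$ that appear in the statement.

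The main obstacle is the upper bound in the two denser ranges, specifically obtaining the exact polynomial exponents $n/p$ and $p^{2-\binom r2}n^{4-r}$ rather than something lossy. The difficulty is that the constraint hypergraph — whose vertices are edges of $G(n,p)$ and whose hyperedges are the $\binom r2$-sets forming a $K_r$ with the added non-constraint that a transitive orientation is allowed — is not a simple covering problem, because a transitively oriented $K_r$ is consistent with $r!$ of the $2^{\binom r2}$ orientations of its edges, so each hyperedge removes only about $\binom r2 - \log_2(r!)$ bits, not all of them, and these savings can overlap in complicated ways across many $K_r$'s sharing edges. Handling this requires a careful structural analysis of how copies of $K_r$ intersect in $G(n,p)$ (for which one needs sharp concentration, via Kim--Vu or the upper tail machinery, for counts of $K_r$'s, of pairs of $K_r$'s sharing $j$ vertices, etc.) together with a clean ``defect'' version of the entropy bound that correctly tallies the free bits. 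I expect the lower-bound constructions to be comparatively routine given the analogous construction for $\circ_3$, so the bulk of the work — and the place where the polylog slack in $\widetilde{\Theta}$ gets used — will be in the upper bound for $p$ above the first threshold.
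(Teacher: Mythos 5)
Your high-level plan (lower bound by identifying a large set of ``freely orientable'' edges; upper bound by finding a small determining set of edges and counting $\sum_i \binom{e(G)}{i}2^i$) matches the paper's structure, and you correctly anticipate that some greedy/peeling argument is needed on the upper-bound side. But as written, both halves of your sketch are missing the specific ideas that make the exponents come out right, and the route you propose for the upper bound would likely founder.

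\emph{Lower bound.} You gesture at the $\circ_3$ construction, but the mechanism by which it generalizes to $K_r$ is absent. The paper's construction fixes a vertex ordering, picks a critical length~$a$, and orients all $a$-long edges forward; the crucial observation is that \emph{any non-transitive tournament on $K_r$ contains a cyclic triangle}, and a cyclic triangle whose longest edge is forward must have its two remaining (short) edges sharing a vertex. Hence a ``dangerous'' $K_r$ is not an arbitrary $K_r$ but one containing two $a$-short edges incident to a common vertex; the expected number of these is $\Theta(n^{r-2}a^2p^{\binom r2})$, which is much smaller than the number of copies of $K_r$, and $a$ is calibrated so that this count is $\ll pan$. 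Your alternative --- delete a hitting set of the $K_r$'s and let $R$ be the remnant --- requires $R$ to be disjoint from the union of all $K_r$-edges, which is essentially everything once $p\gg n^{-2/(r+2)}$, and even in the intermediate range the edge-count of $\bigcup K_r$ exceeds $p^{2-\binom r2}n^{4-r}$ by a polynomial factor, so that construction does not give the stated exponents. The ``cyclic triangle'' reduction is what saves the day and you need to state it.

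\emph{Upper bound.} Your worry about $\log_2(r!)$ free bits per $K_r$ and overlapping constraints, and your invocation of Kim--Vu/entropy machinery, are red herrings for this proof. The paper never counts bits per $K_r$. Instead it reduces everything to triangles: a transitive $K_r$ has all its triangles transitive, so if two edges $v u$ and $v w$ from a vertex $v$ lie with $uw$ in some $K_r$, then one of the two is determined by the other together with $uw$. This gives the clean vertex-by-vertex induction (Claim~\ref{claim:trans}): when adding vertex $v_{k+1}$, the new edges you are \emph{forced} to record form a set $T^+$ (and symmetrically $T^-$) with the property that no two vertices of $T^+$ lie in a common $K_r$ together with $v_{k+1}$. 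A one-shot application of Janson's inequality (Lemma~\ref{lemma:useJanson}) then shows w.h.p.\ that every $T\subset N(v)$ of size $\ge t_r(n,p)$ does contain two vertices forming a $K_r$ with $v$, yielding $|T^\pm|\le t_r(n,p)$ and hence $|S|\le 2nt_r(n,p)$. No upper-tail concentration for $K_r$-counts, no analysis of how $K_r$'s pairwise intersect beyond what Janson needs, and no entropy-defect bookkeeping are required. Without the ``deduce one edge of a triangle from the other two, inside a transitive $K_r$'' observation and the Janson lemma tailored to it, I do not see how your entropy-compression plan produces a rigorous bound with the stated exponents.
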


\smallskip

Note that the functions in~\eqref{eq:main-2} and~\eqref{eq:main-3} coincide when $r = 3$. Figure~\ref{figure:graphs} illustrates the typical behaviour of $T_r(n,p)$ given by Theorem~\ref{thm:transitive} when $r \ge 4$.
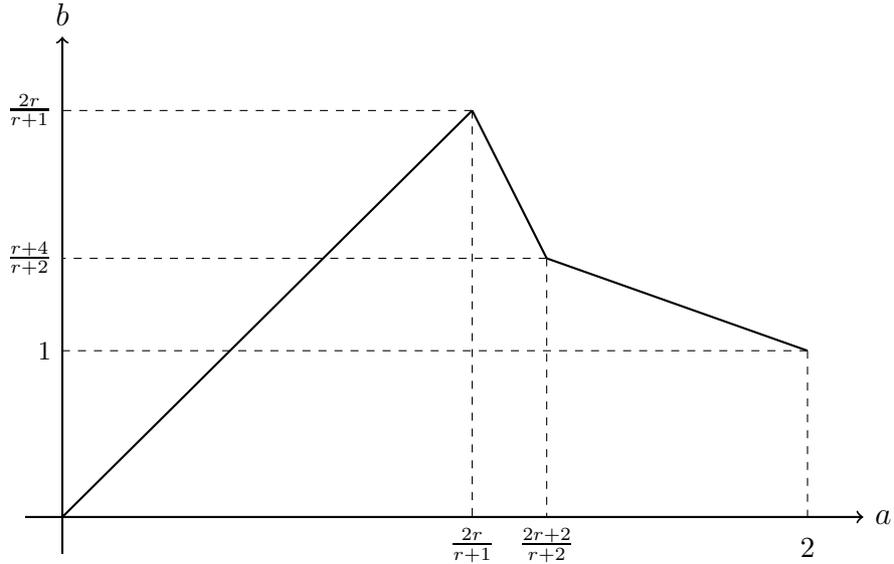
\begin{figure}[ht]
\centering
\begin{tikzpicture}[scale=4.9]
        \draw[thick,->] (-0.1,0) -- (2.15,0) node[right] {$a$};
        \draw[thick,->] (0,-0.1) -- (0,1.3) node[above] {$b$};

        \draw [scale=1,black,thick] (0,0) -- (1.1,1.1) ;
        %node[right,xshift=-2cm,yshift=-1cm]{$p{\binom{n}{2}$};
        \draw [scale=1,black,thick] (1.1,1.1) -- (1.3,0.7);
        %node[right,xshift=-0.2cm,yshift=1.2cm]{$p^{2-{\binom{r}{2}}n^{4-r}$};
        \draw [scale=1,black,thick] (1.3,0.7) -- (2,0.45);
        % node[right,xshift=-2cm,yshift=0.9cm]{$n/p$};

        \draw [dashed] (1.3,0.7) -- (0,0.7) node [left] {\footnotesize $\frac{r+4}{r+2}$};
        \draw [dashed] (1.1,1.1) -- (0,1.1) node [left] {\footnotesize $\frac{2r}{r+1}$};
        \draw [dashed] (2,0.45) -- (0,0.45) node [left] {\footnotesize $1$};
        \draw [dashed] (1.1,1.1) -- (1.1,0) node [below] {\footnotesize $\frac{2r}{r+1}$};
        \draw [dashed] (1.3,0.7) -- (1.3,0) node [below] {\footnotesize $\frac{2r+2}{r+2}$};
        \draw [dashed] (2,0.45) -- (2,0) node [below,yshift=-0.135cm] {\small $2$};

        %\node [circle,fill=white,draw=black,scale=0.45] (t) at (4,4) {};
        %\node [circle,fill=white,draw=black,scale=0.45] (t) at (5,2) {};
\end{tikzpicture}
\caption{The graph of $b = b(a)$, where $T_r(n,p) =
\exp\left(n^{b+o(1)}\right)$ and $p{\binom{n}{2}} = n^{a}$.}%
\label{figure:graphs}
\end{figure}
We remark that, despite the more complicated behaviour of $T_r(n,p)$ when $r \ge 4$, the proof of Theorem~\ref{thm:transitive} is not significantly more difficult than that of Theorem~\ref{thm:triangles}.

\subsubsection*{Avoiding strongly connected tournaments}
The construction used to prove the lower bounds in Theorems~\ref{thm:triangles} and~\ref{thm:transitive} and Conjecture~\ref{conj:cycles} can also be used to prove the lower bound in the following conjecture.

\begin{conj}\label{conj:strongly}
Let $\vec{H}$ be a strongly connected tournament on $r \ge 3$ vertices, and suppose that $p \gg n^{-2/(r+1)}$. Then
\begin{equation*}
\log D\big(G(n,p), \vec{H} \big) = \widetilde{\Theta}\bigg( \ds\frac{n}{p^{(r-1)/2}} \bigg)
\end{equation*}
with high probability as $n \to \infty$.
\end{conj}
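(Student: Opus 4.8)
Since at present only the lower bound is within reach — which is why the statement is phrased as a conjecture — I would prove that bound in full (it is the construction of Section~\ref{sec:lower}) and then describe the strategy and the main obstacle for the upper bound. For the lower bound I would begin by fixing $m$ of order $p^{-(r+1)/2}$ (a small constant factor below this value, so that $G(m,p)$ falls within the scope of the lower bound of Theorem~\ref{thm:transitive}), observing that $m=o(n)$ holds precisely because $p\gg n^{-2/(r+1)}$. I would fix any ordering of $V(G(n,p))$, cut it into $t=\lceil n/m\rceil$ consecutive intervals $B_1,\dots,B_t$, and restrict attention to the orientations of $G(n,p)$ of the following special form: every edge between $B_i$ and $B_j$ with $i<j$ is oriented from $B_i$ towards $B_j$, while inside each $B_\ell$ we take an \emph{arbitrary} orientation of $G[B_\ell]$ in which every copy of $K_r$ is transitively oriented.

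Next I would check that every orientation of this form is $\vec{H}$-free. Let $K$ be a copy of $K_r$ in $G(n,p)$. If $V(K)\subseteq B_\ell$ for some $\ell$ then $K$ is transitively oriented, hence is not a copy of $\vec{H}$, since a strongly connected tournament on $r\ge 3$ vertices is not transitive. Otherwise $V(K)$ meets at least two blocks; letting $B_i$ be the one of smallest index that it meets, every vertex of $K$ lying in $B_i$ has in-degree $0$ within $K$, so $K$ is not strongly connected and again is not a copy of $\vec{H}$. Since the choices made in different blocks are independent of one another (and of the forced edges between blocks), writing $T_r(G)$ for the number of orientations of a graph $G$ in which every $K_r$ is transitive, this gives
\[
D\big(G(n,p),\vec{H}\big)\;\ge\;\prod_{\ell=1}^{t} T_r\big(G[B_\ell]\big).
\]

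To evaluate this I would use that $G[B_1],\dots,G[B_t]$ are independent copies of $G(m,p)$. For $m\asymp p^{-(r+1)/2}$ — the scale at which the first two regimes of Theorem~\ref{thm:transitive} meet — that theorem gives $\log_2 T_r\big(G[B_\ell]\big)=\widetilde{\Omega}\big(p\binom{m}{2}\big)=\widetilde{\Omega}\big(p^{-r}\big)$ with probability bounded below (indeed $1-o(1)$ when $p=o(1)$; when $p=\Omega(1)$ one may instead use blocks of size $2$, each contributing a free edge with probability $p$). By independence and a Chernoff bound, with high probability a positive proportion of the $t$ blocks attain this, and summing over them yields
\[
\log_2 D\big(G(n,p),\vec{H}\big)\;=\;\widetilde{\Omega}\big(t\,p^{-r}\big)\;=\;\widetilde{\Omega}\big(n\,p^{(r+1)/2-r}\big)\;=\;\widetilde{\Omega}\bigg(\frac{n}{p^{(r-1)/2}}\bigg).
\]
Optimising the block size over the three regimes of Theorem~\ref{thm:transitive} shows that $m\asymp p^{-(r+1)/2}$ is the best choice for this construction, which is why we expect it to be tight.

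The hard part, and what we cannot currently do, is the matching upper bound $\log_2 D\big(G(n,p),\vec{H}\big)=\widetilde{O}\big(n/p^{(r-1)/2}\big)$ with high probability. The natural plan follows the upper bounds behind Theorems~\ref{thm:triangles} and~\ref{thm:transitive}: prove that a typical $G(n,p)$ has the property that any $\vec{H}$-free orientation is pinned down by a ``certificate'' of only $\widetilde{O}\big(n/p^{(r-1)/2}\big)$ bits, via a container-type or entropy-counting argument in which revealing the orientation of a carefully chosen sparse set of edges forces the orientation of almost every other edge. The obstacle is that forbidding a \emph{single} strongly connected tournament $\vec{H}$ is a far weaker hypothesis than ``every $K_r$ is transitive'': an $\vec{H}$-free orientation need not be transitive on any copy of $K_r$, so the local rigidity that drives the $T_r$ upper bound is not available, and one would have to extract the required forcing from the strong connectivity of $\vec{H}$ in a more global and delicate way. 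Obtaining the exact exponent $(r-1)/2$ — rather than a weaker one — from such an argument is where a genuinely new idea appears to be needed.
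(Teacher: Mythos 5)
You correctly recognise that the paper treats this statement as a conjecture: only the lower bound is proved (Proposition~\ref{lower:strongly}), while the upper bound is established only under the stronger requirement of avoiding \emph{every} strongly connected tournament on $r$ vertices (Theorem~\ref{thm:strongly}), or in the case $r=3$ (Theorem~\ref{thm:triangles}). Your lower bound argument is correct in substance but takes a slightly different route from the paper's. Proposition~\ref{lower:strongly} works directly: fix a linear order, set $a = p^{-(r+1)/2}/\omega$, orient every $a$-long edge forward, observe that any strongly connected $K_r$ has all $r$ vertices within distance $O(a)$, and then use Markov's inequality to extract $\Omega(pan)$ short edges lying in no such ``dangerous'' $K_r$; these can be oriented freely. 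You instead split $[n]$ into consecutive blocks of size $m$, orient all between-block edges forward, and count $K_r$-transitive orientations of each block via Theorem~\ref{thm:transitive}. Both constructions rest on the same ``orient long edges forward'' idea, but the paper's version is shorter and self-contained, while yours routes through the statement of Theorem~\ref{thm:transitive} (whose first regime is itself proved by exactly the first-moment count you need here). Two small points should be fixed. First, the sentence ``every vertex of $K$ lying in $B_i$ has in-degree $0$ within $K$'' is false when $|V(K)\cap B_i|\ge 2$; the correct observation is that $V(K)\cap B_i$ receives no edges from $V(K)\setminus B_i$, which already shows $K$ is not strongly connected. Second, taking $m\asymp p^{-(r+1)/2}$ places you exactly on the boundary between the first two regimes of Theorem~\ref{thm:transitive}, where neither hypothesis holds as stated; you should take $m$ a factor $\omega^{-1}$ (or a small enough constant) below that scale, which puts each block firmly in the first regime and incurs the same $\omega$-loss that Proposition~\ref{lower:strongly} accepts. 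Your account of the upper-bound obstacle --- that forbidding a single strongly connected $\vec H$ gives much less local rigidity than ``every $K_r$ transitive'' or ``no $K_r$ strongly connected'' --- is consistent with why the paper leaves this as a conjecture.
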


Theorem~\ref{thm:triangles} proves the conjecture in the case $r=3$. Moreover, we are able to prove that the upper bound holds if we instead forbid \emph{all} strongly connected tournaments on $r$ vertices.

\begin{defn}
Let $S_r(n,p)$ denote the random variable which counts the number of orientations of the random graph $G(n,p)$ in which no copy of $K_r$ is strongly connected.
\end{defn}

Note that $T_r(n, p) \leq S_r(n,p)$, and that $D(G(n,p), \circ_{r}) \leq S_r(n,p)$, since every strongly connected orientation of $K_r$ contains a Hamiltonian cycle. The following theorem
determines the typical value of $\log S_r(n, p)$ up to polylogarithmic factors
for every $r \ge 3$.

\begin{thm}\label{thm:strongly}
  Let $r \geq 3$. The following bounds hold with high probability as
  $n \to \infty$.
  \begin{equation*}
    \log_2 S_r(n, p) =
    \begin{cases}
      \ccol{$\big(1 + o(1)\big) p \ds{\binom{n}{2}}$}
      &\text{if }\,n^{-2} \ll p \ll n^{-2/(r+1)},\medskip\\
      \ccol{$\widetilde{\Theta}\bigg(\ds\frac{n}{p^{(r-1)/2}}\bigg)$}
      &\text{if }\,p \gg n^{-2/(r+1)}.
    \end{cases}
  \end{equation*}
\end{thm}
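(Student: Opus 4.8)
The plan is to handle the two density regimes separately: the sparse regime $n^{-2}\ll p\ll n^{-2/(r+1)}$ and the lower bound in the dense regime $p\gg n^{-2/(r+1)}$ are comparatively soft, while the upper bound in the dense regime is the heart of the matter. Throughout write $G=G(n,p)$, and call an orientation \emph{valid} if it contains no strongly connected copy of $K_r$.

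\emph{The soft parts.} Trivially $S_r(n,p)\le 2^{e(G)}$, and $e(G)=(1+o(1))p\binom n2$ w.h.p.\ by Chernoff; this already gives the upper bound claimed in the sparse regime. For the matching lower bound when $p\ll n^{-2/(r+1)}$, let $E_1\subseteq E(G)$ be the set of edges lying in at least one copy of $K_r$ in $G$, and $E_2=E(G)\setminus E_1$. The expected number of copies of $K_r$ in $G$ is $\Theta\big(n^rp^{\binom r2}\big)$, which is $o\big(n^2p\big)=o(e(G))$ precisely because $p\ll n^{-2/(r+1)}$ (here $\binom r2-1=\tfrac12(r-2)(r+1)$, so $n^{r-2}p^{\binom r2-1}=\big(np^{(r+1)/2}\big)^{r-2}\to0$); hence $|E_1|\le\binom r2\cdot\#\{\text{copies of }K_r\text{ in }G\}=o(e(G))$ w.h.p.\ by Markov. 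Orienting $E_1$ by a fixed linear order on $V(G)$ makes every copy of $K_r$ transitive --- hence not strongly connected --- no matter how $E_2$ is oriented, so $S_r(n,p)\ge 2^{|E_2|}=2^{(1-o(1))e(G)}$ w.h.p. For the lower bound in the dense regime, fix a small constant $\eps>0$, put $m=\lfloor\eps\,p^{-(r+1)/2}\rfloor$ (so $1\le m\le n$ here), fix a linear order $\prec$ on $V(G)$, and partition $V(G)$ into consecutive $\prec$-blocks $L_1\prec\dots\prec L_k$ of size at most $m$. Orient every edge between two distinct blocks upward (from the lower block to the higher), and inside each $L_i$ orient the edges that lie in some copy of $K_r\subseteq G[L_i]$ upward with respect to $\prec$ while orienting all remaining edges arbitrarily. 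A copy of $K_r$ that meets two blocks is reducible, since its vertices in the lowest block it meets dominate all its other vertices through upward inter-block edges; a copy inside a single block has all its edges oriented upward, hence is transitive. Thus every orientation obtained this way is valid, and their number is $2^{\sum_i(e(G[L_i])-|E_1^{(i)}|)}$, where $E_1^{(i)}$ is the set of within-block $K_r$-edges of $L_i$. A routine first-moment computation (since $mp^{(r+1)/2}=O(\eps)$, the mean of $\sum_i|E_1^{(i)}|\le\binom r2\sum_i\#\{K_r\subseteq G[L_i]\}$ is a small fraction of the mean of $\sum_ie(G[L_i])=(1+o(1))\sum_i\binom{|L_i|}2p=\Theta(nmp)$) shows that this is w.h.p.\ at least $2^{(1-o(1))\Theta(nmp)}$, i.e.\ $\log_2S_r(n,p)=\widetilde\Omega\big(n/p^{(r-1)/2}\big)$.

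\emph{The dense upper bound: reduction to feedback arc sets.} For a digraph $\vec D$ let $\mathrm{fas}(\vec D)$ be the minimum number of arcs whose deletion makes $\vec D$ acyclic. Deleting all arcs internal to the strongly connected components of $\vec D$ leaves a DAG, so $\mathrm{fas}(\vec D)\le\sum_C e(\vec D[C])$, the sum over strongly connected components. Moreover $\mathrm{fas}(\vec D)$ equals the minimum, over linear orders of $V(\vec D)$, of the number of backward arcs, so any orientation $\vec G$ of $G$ is determined by a linear order realising $\mathrm{fas}(\vec G)$ together with the set of those backward arcs; hence, with $F:=\max\{\mathrm{fas}(\vec G):\vec G\text{ a valid orientation of }G\}$,
\[
S_r(n,p)\ \le\ n!\cdot\sum_{j\le F}\binom{e(G)}{j}\ \le\ 2^{\,O(n\log n)+O(F\log n)}.
\]
Since $n\log n=\widetilde O(n)\le\widetilde O\big(n/p^{(r-1)/2}\big)$, it suffices to show that w.h.p.\ $F=\widetilde O\big(n/p^{(r-1)/2}\big)$. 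So let $\vec G$ be valid with strongly connected components $C_1,\dots,C_t$; then $\mathrm{fas}(\vec G)\le\sum_j e(G[C_j])$, and each $\vec G[C_j]$ is a strongly connected orientation of $G[C_j]$ with no strongly connected copy of $K_r$. Put $s_0:=p^{-(r+1)/2}(\log n)^{C_0}$ for a suitable constant $C_0$; we may assume $s_0\le n$, since otherwise $F\le e(G)=\widetilde O\big(n/p^{(r-1)/2}\big)$ already. A standard union bound over families of pairwise disjoint vertex sets of size at most $s_0$ shows that w.h.p.\ every such family has $\sum e(G[A])=\widetilde O\big(p\,s_0\,n\big)=\widetilde O\big(n/p^{(r-1)/2}\big)$; applying this to the components $C_j$ with $|C_j|\le s_0$ reduces everything to ruling out large components.

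\emph{The main obstacle.} What remains is the following claim: w.h.p.\ $G(n,p)$ with $p\gg n^{-2/(r+1)}$ has no induced subgraph on more than $s_0$ vertices that admits a strongly connected orientation with no strongly connected copy of $K_r$. Granting this, every strongly connected component of every valid orientation has at most $s_0$ vertices, and the previous paragraph gives $F=\widetilde O\big(n/p^{(r-1)/2}\big)$, completing the proof. I expect proving this claim to be by far the hardest step. The route I would take is to fix $U\subseteq V(G)$ with $|U|=s>s_0$, condition on $G[U]$, and bound the probability $q_s$ that $G[U]$ admits such an orientation; as there are $\binom ns\le n^s$ choices of $U$ and at most $n$ choices of $s$, it suffices to obtain, say, $q_s\le n^{-2s}$. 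The combinatorial core is a density version of a classical fact about tournaments --- a strongly connected tournament on at least $r$ vertices contains a strongly connected $K_r$, since by Moon's theorem it is vertex-pancyclic and hence contains a directed $r$-cycle, whose $r$ vertices then span a strongly connected sub-tournament --- namely that a strongly connected orientation of a graph that is ``$K_r$-dense enough'' must already contain a strongly connected $K_r$. The arithmetic of $G(n,p)$ makes $p^{-(r+1)/2}$ the right threshold: it is exactly the scale $s$ above which $G(s,p)$ has more copies of $K_r$ than edges --- the same phenomenon that fixes the threshold $n^{-2/(r+1)}$ in the sparse regime and makes this upper bound match the block construction. Turning the heuristic into the bound $q_s\le n^{-2s}$ --- presumably via a supersaturation or second-moment estimate over the ways a strongly connected, strongly-connected-$K_r$-free orientation of $G[U]$ could arise, exploiting that any such orientation is forced to have an atypically small feedback arc set for a graph that is simultaneously as dense and as highly arc-connected as $G[U]$ --- is where I expect essentially all of the technical work to lie, and is the point at which the hypothesis $p\gg n^{-2/(r+1)}$ (rather than anything weaker) is genuinely used.
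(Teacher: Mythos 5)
Your treatment of the sparse regime and the dense lower bound is correct and matches the paper in spirit (the paper proves the dense lower bound in Proposition~\ref{lower:strongly} via the single--linear-order, ``short vs.\ long edges'' construction; your block construction is a cosmetic variant and works, with the small caveat that you should argue, as you do, that a $K_r$ meeting two blocks has no way back from a higher block to the lowest one). The trouble is in the dense upper bound, which is the substance of the theorem, and there your argument has a genuine gap.

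You reduce the upper bound, via the $n!\cdot\sum_{j\le F}\binom{e(G)}{j}$ feedback-arc-set encoding, to showing $F=\widetilde O\big(n/p^{(r-1)/2}\big)$, and then further reduce this (through $\mathrm{fas}(\vec G)\le\sum_C e(\vec G[C])$) to the claim that w.h.p.\ no induced subgraph of $G(n,p)$ on more than $s_0\approx p^{-(r+1)/2}(\log n)^{C_0}$ vertices admits a strongly connected orientation with no strongly connected $K_r$. You correctly identify this claim as the crux and do not prove it; the paragraph that follows is a heuristic, not an argument. The intended union bound requires $q_s\le n^{-2s}$ where $q_s$ is the probability that a fixed $s$-set admits such an orientation, and you give no mechanism for establishing this. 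The claim concerns a single, adversarially chosen orientation out of the $2^{e(G[U])}$ possibilities, and the obstruction you appeal to (``a strongly connected orientation of a $K_r$-dense graph must contain a strongly connected $K_r$'') is not a known statement; Moon's theorem is about \emph{tournaments}, and $G[U]$ here is far from complete. In fact the bound $\mathrm{fas}(\vec G)\le\sum_C e(\vec G[C])$ you use is rather lossy, so even granting the SCC-size claim you would be throwing information away; it is not clear the route can be pushed through. This is a missing idea, not a calculation left to the reader.

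For comparison, the paper avoids any global structural statement about SCCs. Its upper bound argument is local: it shows (Claim~\ref{claim:strongly}) that every valid orientation of $G$ is reconstructible from a ``seed'' set of at most $2n\cdot s_{r-1}(n,p)$ oriented edges, where $s_{r-1}(n,p)=\widetilde\Theta\big(p^{-(r-1)/2}\big)$ is the size above which every vertex subset of $G(n,p)$ contains a $K_{r-1}$ (Lemma~\ref{lemma:Kr:everywhere}). The key observation, which your proposal does not use, is that \emph{every} orientation of $K_{r-1}$ contains a Hamiltonian directed path (R\'edei). Processing vertices one at a time, if more than $s_{r-1}(n,p)$ out-edges of the new vertex $v$ had to be recorded, their endpoints would span a $K_{r-1}$; its Hamiltonian path $a\to\cdots\to b$ is already determined, and since $v\to a$ is recorded and $K_r=\{v\}\cup K_{r-1}$ must not be strongly connected, $v\to b$ is forced and need not be recorded, contradicting minimality. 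The count $\sum_{i\le 2n\,s_{r-1}(n,p)}\binom{pn^2}{i}2^i$ then gives the theorem directly. I would suggest replacing your FAS/SCC approach with this encoding argument: the only probabilistic input is Lemma~\ref{lemma:Kr:everywhere} (a routine Janson estimate), and the structural step about SCC sizes --- which is where your proof stalls --- is never needed.
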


We consider this result to be reasonably strong evidence in favour of
Conjecture~\ref{conj:strongly}. In Section~\ref{sec:open} we will
discuss other forbidden oriented graphs; we do not know of any
oriented graph $\vec{H}$ containing a cycle for which our lower bound
construction fails to give sharp bounds on $D\big(G(n,p), \vec{H}
\big)$.

The rest of the paper is organised as follows: in Section~\ref{sec:lower} we prove the various lower bounds; in Sections~\ref{sec:upper:triangle},~\ref{sec:upper:transitive} and~\ref{sec:strongly} we prove the upper bounds in Theorems~\ref{thm:triangles},~\ref{thm:transitive} and~\ref{thm:strongly} respectively; in Section~\ref{sec:upper:cycles} we prove an upper bound for arbitrary cycles, and in
Section~\ref{sec:open} we discuss some further open problems and conjectures.

\section{Lower bounds}\label{sec:lower}

Each of the bounds proved in this section (and the general lower bound
given in Section~\ref{sec:general:H}) follows from the same
simple construction. Roughly speaking, we fix a
linear order on the vertex set (let us identify it with the set $[n] =
\{1,\ldots,n\}$), choose a ``critical length'' $a$, and orient all
edges of length at least $a$ in the same direction (``forward''). The edges shorter than $a$ may be oriented in either direction, as long as they are not at risk of creating a forbidden substructure; by choosing $a$ carefully, we can guarantee that (with high probability) there are many such ``free'' edges.

To illustrate this construction with a simple example, let us begin by proving the lower bound in equation~\eqref{eq:triangles} in the case $p = o(1)$. (The case $p = \Theta(1)$ follows from Proposition~\ref{lower:acyclic}, below.) 

\begin{prop}\label{lower:triangles}
If $\omega \gg 1$ and $n^{-1/2} \le p = o(1)$, then
\begin{equation}\label{eq:simple_triangle}
  \log D\big(G(n,p), \circ_{3} \big) \ge \frac{n}{\omega \cdot p}
\end{equation}
with high probability as $n \to \infty$.
\end{prop}

\begin{proof}
We may assume that $\omega \to \infty$ sufficiently slowly. We will show that with high probability there
  exists a set of at least $n / \omega p$ edges which can be oriented
  freely without creating a non-transitive triangle. To do so, set
  $a = 2 \cdot p^{-2} / \omega$, and note that $a = o(n)$, since $p \ge n^{-1/2}$ and $\omega \to \infty$, and that $a \gg 1$, since $p = o(1)$ and $\omega \to \infty$ sufficiently slowly. 
  Let us say that an edge $uv$ is
  $a$-short if its length $|u - v|$ is less than $a$, and $a$-long
  otherwise. Note that if we orient all $a$-long edges forward, then
  any non-transitive triangle must contain at least two $a$-short
  edges (a ``dangerous triangle''), at least one of which must be oriented backward.

Now, observe (e.g., by Chernoff's inequality) that with high
probability the number of $a$-short edges in $G(n,p)$ is $\big(1 +
o(1) \big) pan$, and that the expected number of triangles in $G(n,p)$
containing at least two $a$-short edges is $O\big(p^3 a^2 n \big) \ll
pan$. By Markov's inequality, it follows that with high probability
there exists a set of at least $pan/2 = n / \omega p$ edges that are
not in any such triangle and therefore can be oriented freely, as required.
\end{proof}

We conjecture (see Conjecture~\ref{conj:triangles:sharp}) that the lower bound on $D\big(G(n,p), \circ_{3} \big)$ given by Proposition~\ref{lower:triangles} is sharp up to a constant
factor in the exponent when $p$ is not too large. On the other hand, when $p \geq 1/\log n$ we can obtain a stronger lower bound by varying
the linear order used in the construction. The following proposition also
provides a suitable lower bound for all non-acyclic $\vec H$ when $p =
\Theta(1)$.

\begin{prop}\label{lower:acyclic}
  If $\vec H$ contains a cycle and $p \gg (\log n)/n$, then
  \[D\big(G(n,p), \vec H \big) \ge p^n \cdot n! \,/\,e^{o(n)}\]
  with high probability as $n \to \infty$.
\end{prop}

Proposition~\ref{lower:acyclic} is a straightforward consequence of a result of
Goddard, Kenyon, King and Schulman~\cite[Theorem 2.5]{GKKS} which gives a lower
bound on the number of acyclic orientations of an arbitrary graph $G$ in terms
of its degree sequence (see also~\cite{IG}). However, since the proof requires
some tedious calculation using Stirling's formula, we provide the details of the
proof in the appendix.

It is also straightforward to replace the factor of $1/\omega$ in
\eqref{eq:simple_triangle} by a small fixed constant using the second
moment method. To illustrate this, we will prove the following bound on
$T_r(n,p)$.

\begin{prop}\label{lower:transitive}
  Let $r \geq 3$. There exists $c > 0$ such that if
  $n^{-2/(r+1)} \ll p \ll n^{-2/(r+2)}$, then
  \[\log T_r(n,p) \ge c \cdot \frac{n^{4-r}}{p^{\binom{r}{2} - 2}}\]
  with high probability as $n \to \infty$.
\end{prop}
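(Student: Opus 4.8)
The plan is to carry out the construction outlined at the start of this section, with the critical length tuned to $K_r$ and with the loss of the $1/\omega$ factor removed via the second moment method. Identify the vertex set with $[n]$ with its natural linear order, let $\epsilon>0$ be a small constant (depending only on $r$, to be chosen), set
\[
a \;:=\; \epsilon\cdot\frac{n^{3-r}}{p^{\binom{r}{2}-1}},
\]
and orient forward every edge of length at least $a$ (the \emph{long} edges), calling an edge \emph{short} if its length is smaller than $a$. One checks that $a=o(n)$ and that $pan\to\infty$ precisely in the stated range; these are the only two places where the hypotheses on $p$ are used.

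The construction rests on the following deterministic observation: \emph{every copy $T$ of $K_r$ in $G$ containing at most one short edge is ``safe'', in the sense that every orientation of $T$ in which all long edges point forward is transitive}. Indeed, write $V(T)=\{v_1<\dots<v_r\}$; if some short edge $v_iv_j$ of $T$ (say $i<j$) had a vertex $v_m$ with $i<m<j$, then $v_iv_m$ and $v_mv_j$ would be edges of $T$ of strictly smaller length, hence also short, so $T$ would have at least three short edges. Thus, if $T$ has at most one short edge, that edge (if present) joins consecutive vertices $v_k,v_{k+1}$ of $T$; and reversing the arc $v_k\to v_{k+1}$ in the transitive tournament on $v_1,\dots,v_r$ keeps it acyclic, since a directed cycle would have to traverse this unique backward arc and then return from $v_k$ to $v_{k+1}$ along forward arcs, which is impossible. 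Now let $S$ be the set of short edges of $G$ contained in no copy of $K_r$ in $G$ having two or more short edges. If we orient the edges of $S$ arbitrarily and all remaining edges forward, then every copy of $K_r$ is transitive: a copy with at least two short edges contains no edge of $S$ and so is oriented entirely forward, while a copy with at most one short edge is safe. Hence $\log_2 T_r(n,p)\ge |S|$, and it remains to prove $|S|\ge c\,n^{4-r}/p^{\binom{r}{2}-2}$ with high probability.

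Next I would estimate $|S|=Y-B$, where $Y$ is the number of short edges of $G$ and $B$ the number of short edges lying in some copy of $K_r$ with at least two short edges. There are $(1+o(1))an$ short pairs, so by Chernoff's inequality $Y=(1+o(1))pan=:M$ with high probability, where $M=\epsilon\,n^{4-r}/p^{\binom{r}{2}-2}$. Bounding $B$ by $\binom{r}{2}X$, where $X$ is the number of copies of $K_r$ in $G$ with at least two short edges, and splitting according to whether the two short edges share a vertex, a routine first moment computation gives $\Ex[X]=O\big(a^2n^{r-2}p^{\binom{r}{2}}\big)=O(\epsilon)\cdot M$; choosing $\epsilon$ small enough (in terms of $r$) we may assume $\Ex[B]\le\binom{r}{2}\Ex[X]\le M/4$.

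Finally I would upgrade the bound on $B$ to hold with high probability via Chebyshev's inequality applied to $X=\sum_{W\in\cW}\1[W\subseteq G]$, where $\cW$ is the (deterministic) family of $K_r$-copies on $[n]$ with at least two short edges. Since edge-disjoint $W,W'$ contribute nothing to $\Var(X)$, we have $\Var(X)\le\sum\Pr[W\cup W'\subseteq G]$, the sum over ordered pairs $W,W'\in\cW$ sharing at least one edge; classifying such pairs by the isomorphism type of $W\cap W'$ and by the positions of the short edges, one checks that every term is $o(M^2)$, so $\Var(X)=o(M^2)$ and hence $X=(1+o(1))\Ex[X]$ with high probability. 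Then $B\le\binom{r}{2}X\le M/3$ with high probability, which together with $Y=(1+o(1))M$ gives $|S|\ge M/2$ with high probability; taking $c:=\epsilon/2$ finishes the proof. The main work is this last step: the second moment estimate for $X$ requires bounding several overlap types separately, which is somewhat lengthy, although each individual bound is routine and comfortably of lower order than $M^2$.
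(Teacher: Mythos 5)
Your approach is essentially the same as the paper's: the construction, the choice of critical length $a = \Theta\big(n^{3-r}p^{1-\binom{r}{2}}\big)$, and the use of the second moment method all match what is done in the paper (sketch in Section~2, details in the appendix). The one substantive difference is the ``dangerous'' class: you declare a $K_r$ dangerous if it has at least two short edges, whereas the paper uses the tighter condition that it has two short edges \emph{sharing a vertex}, which follows from the observation that a non-transitive tournament contains a cyclic triangle, and a cyclic triangle whose longest edge points forward must have its two other (shorter, hence short) edges backward and incident at a common vertex. Your broader class is still of the same order of magnitude in expectation (two disjoint short edges give $O(n\cdot a\cdot n\cdot a\cdot n^{r-4}) = O(a^2n^{r-2})$ choices, matching the shared-vertex case), and the dominant overlap terms in the variance also coincide, so your argument does go through — your safety criterion (at most one short edge) is simply more conservative than the paper's (no two short edges sharing a vertex), both of which are valid. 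One small inaccuracy worth flagging: you assert that $a=o(n)$ and $pan\to\infty$ are ``precisely'' the stated range and ``the only two places where the hypotheses on $p$ are used.'' For $r\geq 4$ the condition $pan\to\infty$ is strictly weaker than $p\ll n^{-2/(r+2)}$; the full strength of the upper bound on $p$ is in fact used in the deferred variance computation, where one needs $a = \Omega(p^{-2})$ (equivalently $p \lesssim n^{-2/(r+2)}$) to control the overlap term with $|R_1\cap R_2|=3$. You would discover this when carrying out the calculation, so it is not a gap, but the claim as written is not correct.
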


Observe that Propositions~\ref{lower:triangles}~and~\ref{lower:acyclic} imply
the lower bound in equation~\eqref{eq:main-3}, and
Proposition~\ref{lower:transitive} implies the lower bound in \eqref{eq:main-2}.
Indeed, the lower bound in~\eqref{eq:main-3} follows simply by noticing that in
a $\circ_3$-free oriented graph, all copies of $K_r$ are transitive.

Since the proof of Proposition~\ref{lower:transitive} requires some slightly tedious calculations, we will give here just a sketch of the proof; for the full details, see the appendix.

\begin{proof}[Sketch proof of Proposition~\ref{lower:transitive}]
We repeat the proof strategy of Proposition~\ref{lower:triangles}, using the second moment method instead of Markov's inequality. Choose $a = c \cdot n^{3-r} p^{1 - \binom{r}{2}}$, and observe that the lower bound on $p$ implies that $a = o(n)$, and the upper bound implies that $a = \Omega( p^{-2} )$. As before, say that an edge is $a$-short if its length is less than $a$, and $a$-long otherwise. Note that any non-transitive copy of $K_r$ contains a cyclic triangle, so if we orient all $a$-long edges forward, then any non-transitive $K_r$ must contain at least two $a$-short edges sharing a vertex, at least one of which must be oriented backward.

Let $X$ denote the number of copies of $K_r$ in $G(n,p)$ that are ``dangerous'', in the sense that they contain two $a$-short edges incident to a single vertex. A straightforward calculation gives
\begin{equation*}
  \Ex[X] \, = \, \Theta\big( n^{r-2} a^2 p^{\binom{r}{2}} \big) \, \leq \, pan/2r^2,
\end{equation*}
if $c > 0$ is sufficiently small, and
\begin{equation*}
  \Var(X) \, = \, O\Big(  n^{2r-5} a^3 p^{2\binom{r}{2}-1} + n^{2r-5}
  a^2 p^{2\binom{r}{2} - 3} + n^{r-1} a^2 p^{\binom{r}{2}+ r - 1} \Big)
  \ll (pan)^2,
\end{equation*}
since $pan \gg 1$ and $a = \Omega( p^{-2} )$. It follows, by
Chebyshev's inequality, that $X \le pan / r^2$ with high probability, and hence there exists a set of at least $pan/2$ edges that can be oriented freely, as required.
\end{proof}

It is straightforward to generalize this idea to prove the lower bounds in Theorem~\ref{thm:strongly} and Conjectures~\ref{conj:cycles} and~\ref{conj:strongly}, so we will be somewhat brief with the details. To avoid tedious calculations, we will prove the slightly weaker bounds given by Markov's inequality, rather than the slightly stronger bounds given by the second moment method.

\begin{prop}\label{lower:cycles}
Let $r \ge 3$ and $\omega \gg 1$. If $p \ge n^{-(r-2)/(r-1)}$,  then
\[\log D\big(G(n,p), \circ_{r} \big) \ge \frac{n}{\omega \cdot p^{1/(r-2)}}\]
with high probability as $n \to \infty$.
\end{prop}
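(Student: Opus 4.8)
The plan is to mimic the proof of Proposition~\ref{lower:triangles}, adjusting the ``critical length'' $a$ to the $r$-cycle setting. First I would fix the linear order on $[n]$, pick
\[
a \,=\, \Theta\Big( n \cdot p^{-(r-1)/(r-2)} \Big),
\]
and check that the hypothesis $p \ge n^{-(r-2)/(r-1)}$ guarantees $a = O(n)$ (so that $a$-short edges are not vacuous), while $p \gg n^{-2}$ ensures $pan \gg 1$. As before, call an edge \emph{$a$-short} if its length is below $a$, and \emph{$a$-long} otherwise, and orient every $a$-long edge ``forward''. The key structural observation is that if $C$ is a directed cycle of length $r$ under such an orientation, then $C$ cannot consist entirely of $a$-long edges: following the cycle, the forward edges push the coordinate strictly upward, so to return to the starting vertex at least one edge must be oriented backward, and (orienting all long edges forward) that backward edge must be $a$-short. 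Hence every copy of $\circ_r$ in the resulting orientation contains at least one $a$-short backward edge; so if we can find many $a$-short edges lying in \emph{no} copy of $C_r$ that also uses another $a$-short edge, we may orient those freely.

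The second step is a first-moment computation. By Chernoff's inequality, with high probability the number of $a$-short edges in $G(n,p)$ is $(1+o(1))pan$. Next I would bound the expected number of copies of $C_r$ in $G(n,p)$ containing at least two $a$-short edges; here some care is needed, because the ``dangerous'' short edges need not share a vertex. A cycle $C_r$ using a prescribed set $S$ of $\ge 2$ short edges has its vertices confined to intervals of length $O(a)$ around already-chosen vertices for each short edge, while the remaining $r - |S|$ edges are unconstrained. Summing over $|S| \in \{2,\dots,r\}$ and over the cyclic arrangements of the short edges, the dominant term comes from $|S| = 2$, giving an expected count of order $n^{r-2} a^2 p^r$; one checks that with the choice $a = \Theta(n \, p^{-(r-1)/(r-2)})$ this equals $\Theta(n^{r-2} \cdot n^2 p^{-2(r-1)/(r-2)} \cdot p^r)$, and the exponent of $p$ is $r - 2(r-1)/(r-2) = (r(r-2) - 2(r-1))/(r-2) = (r^2 - 4r + 2)/(r-2)$, which one verifies is positive for $r \ge 4$ (and for $r = 3$ the statement reduces to Proposition~\ref{lower:triangles}), so this is $o(a^2 n) \cdot p^2 n^{r-2}$; more to the point, a direct comparison shows it is $\ll pan$ exactly when $a \ll (p^{r-1} n^{r-3})^{-1/1}$-type bounds hold, which is precisely arranged by our choice of $a$ together with $p \ge n^{-(r-2)/(r-1)}$. (The terms with $|S| \ge 3$ are smaller by further factors of $ap/n \cdot$ something and are handled the same way.)

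The third step is the conclusion by Markov's inequality, exactly as in Proposition~\ref{lower:triangles}: since the expected number of dangerous $C_r$'s is $\ll pan$, with high probability there are at most $pan/2$ of them, hence at most $pan/2$ short edges lie in a dangerous cycle; removing those from the $(1+o(1))pan$ short edges leaves at least $pan/2 \ge n/(\omega p^{1/(r-2)})$ short edges (after absorbing constants into $\omega$, using $a = \Theta(n p^{-(r-1)/(r-2)})$ so that $pan = \Theta(n^2 p^{-1/(r-2)})$, which dominates $n/(\omega p^{1/(r-2)})$) that lie in no copy of $C_r$ using another short edge. Orienting these freely (and all other short edges forward) never completes a directed $r$-cycle, so each of the $2^{(\text{that many})}$ orientations is $\circ_r$-free, giving the claimed bound on $\log D(G(n,p), \circ_r)$. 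The main obstacle, and the only place requiring genuine care, is the bookkeeping in the second step: one must correctly account for all cyclic patterns of which edges of the $C_r$ are short and verify that the $|S| = 2$ term dominates and is $\ll pan$ under the stated range of $p$; everything else is a routine adaptation of the triangle case.
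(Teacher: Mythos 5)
Your choice $a = \Theta\big(n\,p^{-(r-1)/(r-2)}\big)$ carries a spurious factor of $n$: since $p\leq 1$ this forces $a \geq n$, so every edge would be $a$-short, the Chernoff estimate $(1+o(1))pan$ for the number of short edges would fail, and the claim ``$a = O(n)$'' would be false. The intended value, mirroring the triangle case, is $a = p^{-(r-1)/(r-2)}/\omega$, which the hypothesis $p \geq n^{-(r-2)/(r-1)}$ correctly makes $o(n)$ and for which $pan = n/(\omega\,p^{1/(r-2)})$.

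Even with $a$ corrected, the first-moment step has a genuine gap for $r \geq 4$. You declare a $C_r$ dangerous when it contains at least two $a$-short edges, and you correctly identify the dominant expected count as $\Theta(n^{r-2}a^2p^r)$. What has to be checked is that this is $\ll pan$, i.e.\ that $n^{r-3}a\,p^{r-1} \ll 1$. Substituting $a = p^{-(r-1)/(r-2)}/\omega$ gives
\[
n^{r-3}a\,p^{r-1} \;=\; \frac{1}{\omega}\Big(n\,p^{(r-1)/(r-2)}\Big)^{r-3}.
\]
The hypothesis only guarantees $n\,p^{(r-1)/(r-2)} \geq 1$; as soon as $p$ is polynomially larger than the threshold $n^{-(r-2)/(r-1)}$, the quantity $n\,p^{(r-1)/(r-2)}$ is polynomially large, so for $r \geq 4$ the right-hand side is $\gg 1$ for any slowly growing $\omega$. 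Your observation that the $p$-exponent $(r^2-4r+2)/(r-2)$ is positive for $r \geq 4$ does not help: it concerns the absolute size of $n^{r-2}a^2p^r$, not its size relative to $pan$. In short, your ``dangerous'' count can vastly exceed the number of short edges, and Markov's inequality then gives nothing. (Decreasing $a$ to fix this costs you a polynomial factor in the final bound.)

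The missing idea is a much tighter necessary condition, which the paper exploits: if a $C_r$ can be oriented as a $\circ_{r}$ with all $a$-long edges forward, then its \emph{entire} vertex set lies in an interval of length at most $(r-1)(a-1)$. Indeed, a $\circ_r$ has at most $r-1$ backward edges, each of length $\leq a-1$, and the spread of a directed cycle is bounded by its total backward displacement. This confines the dangerous $C_r$s to $O(n\,a^{r-1})$ vertex tuples rather than $O(n^{r-2}a^2)$, so the expected count is $O(n\,a^{r-1}p^r)$ and
\[
\frac{n\,a^{r-1}p^r}{pan} \;=\; a^{r-2}p^{r-1} \;=\; \omega^{-(r-2)} \;\ll\; 1
\]
throughout the stated range of $p$. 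For $r=3$ the two criteria coincide and your argument is fine; for $r\geq 4$ the ``two short edges'' criterion is too weak and the proof as written does not establish the proposition.
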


\begin{proof}
  The lower bound in Proposition~\ref{lower:acyclic} is stronger when $p
  = \Omega(1)$, so we may assume $p = o(1)$. Set $a = p^{-(r-1)/(r-2)} / \omega = o(n)$, and consider any orientation $\vec{G}$ of $G(n,p)$ in which all $a$-long edges are oriented forward. We claim that if a copy $C = (c_1, \ldots, c_r)$ of $C_r$ in $G(n,p)$ is oriented cyclically in $\vec{G}$, then $|c_i - c_j| \leq (r-1)(a-1)$ for every $i,j \in [r]$. To see this, simply note that every backwards edge has length at most $a - 1$, and at most $r - 1$ of the edges can be directed backwards. 

  In particular, if we orient only the $a$-long edges of $G(n,p)$ forward, the
  number of choices for $V(C)$ of size $r$ such that $C$ could form a $\circ_r$
  for some orientation of the $a$-short edges is at most $n(ra)^{r-1}$.
  Therefore, the expected number of such copies of $C_r$ in $G(n,p)$ is
  $O\big(p^r a^{r-1} n \big) \ll pan$, so the claimed bound follows as in the
  proof of Proposition~\ref{lower:triangles}.
\end{proof}

The next proposition implies the lower bounds in Conjecture~\ref{conj:strongly} and Theorem~\ref{thm:strongly}.

\begin{prop}\label{lower:strongly}
Let $r \ge 3$ and $\omega \gg 1$. If $p \ge n^{-2/(r+1)}$, then
\[\log S_r(n,p) \ge \frac{n}{\omega \cdot p^{(r-1)/2}}\]
with high probability as $n \to \infty$.
\end{prop}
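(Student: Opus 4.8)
The plan is to mimic the proof of Proposition~\ref{lower:cycles}, choosing the critical length $a$ so that the orientation of all $a$-long edges forward kills every strongly connected copy of $K_r$ except those that have enough $a$-short edges to be rare in $G(n,p)$. First I would set $a = \Theta\big( p^{-(r+1)/2} / \omega \big)$ and verify from the hypothesis $p \gg n^{-2/(r+1)}$ that $a = o(n)$. Fix the linear order on $[n]$ and orient every $a$-long edge forward. The key structural observation is the following: if a copy of $K_r$ is strongly connected, then in particular its vertices do not all lie in a ``linear'' order induced by its edges, so it must contain a backward edge; more carefully, I want to argue that a strongly connected $K_r$ (with the $a$-long edges already oriented forward) must contain many $a$-short edges. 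Concretely, let $v$ and $w$ be the leftmost and rightmost vertices of a dangerous copy of $K_r$ (one that could still be made strongly connected by some orientation of its $a$-short edges). Since the edge $vw$ has length at least the spread of the copy, and strong connectivity forces a directed path from $w$ back to $v$ that must at some point move ``left'', the copy must contain at least one backward edge, hence at least one $a$-short edge; but a single short edge is not enough for the count to work, so the real point is to bound how short a strongly connected $K_r$ must be, i.e.\ to show the spread $|w-v|$ is at most $O(a)$ — equivalently that a dangerous $K_r$ has \emph{all} of its vertices within an interval of length $O(a)$ — \emph{or}, failing that, that it contains at least $r-1$ edges of length $< a$.

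The cleanest version of the argument: observe that a strongly connected tournament on $r$ vertices has no dominant vertex and no dominated vertex, so in particular the leftmost vertex $v$ (in the linear order) has at least one in-edge within the copy, and that in-edge points backward (from a vertex to its right towards $v$), hence is $a$-short unless $v$'s right-neighbours in the copy are all within distance $a$ — wait, a backward edge \emph{into} $v$ from a vertex $x>v$ has length $x-v$, which must be $<a$ for the copy to be dangerous; similarly the rightmost vertex $w$ has an out-edge going backward, of length $w - y < a$ for some $y<w$ in the copy. Iterating this observation at the second-leftmost and second-rightmost vertices, and so on, one shows that a dangerous $K_r$ must be contained in an interval of length $O(ra)$: every vertex of the copy, ordered by position, is forced within distance $O(a)$ of the previous one by a short backward edge. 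This is exactly the bound I need. Therefore the number of copies of $K_r$ in $K_n$ that are dangerous is at most $n \cdot (O(ra))^{r-1}$, so the expected number in $G(n,p)$ is $O\big( n\, a^{r-1} p^{\binom{r}{2}} \big)$. By the choice $a = \Theta(p^{-(r+1)/2}/\omega)$ we get $a^{r-1} p^{\binom{r}{2}} = \Theta\big( a^{r-2} \cdot a\, p^{\binom{r}{2}} / p^{(r-1)/2}\cdot p^{(r-1)/2}\big)$; more simply, $a^{r-1}p^{\binom{r}{2}} = \Theta\big( p^{-(r-1)(r+1)/2}\omega^{-(r-1)} p^{r(r-1)/2} \big) = \Theta\big( \omega^{-(r-1)} p^{(r-1)/2} \big)$, hence the expected number of dangerous copies is $O\big( n p^{(r-1)/2} \omega^{-(r-1)}\big) \ll n a p / \omega \asymp \text{(number of $a$-short edges)}/\omega^2$, which is $o(1)$ times the number of $a$-short edges provided $\omega$ is taken growing slowly enough; here the number of $a$-short edges in $G(n,p)$ is $(1+o(1)) p a n = \Theta\big( n / (\omega p^{(r-1)/2}) \big)$ with high probability by Chernoff.

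Putting this together: by Markov's inequality, with high probability the number of $a$-short edges lying in some dangerous $K_r$ is at most $pan/2$, while the total number of $a$-short edges is $(1+o(1))pan$; deleting the former, we are left with a set of at least $pan/3$ edges, each of which lies in no dangerous copy of $K_r$. Orienting those edges arbitrarily (and the remaining $a$-short edges forward, say) produces an orientation with no strongly connected $K_r$, and any of the $2^{pan/3}$ choices on the free edges gives a distinct such orientation, so $\log S_r(n,p) \ge pan/3 = \Theta\big( n / (\omega' p^{(r-1)/2})\big)$, which after renaming $\omega'$ yields the claimed bound. The main obstacle is the structural claim that a dangerous copy of $K_r$ has spread $O(ra)$ — i.e.\ pinning down exactly why strong connectivity, after the $a$-long edges are forced forward, confines the copy to a short interval; the iterated leftmost/rightmost-vertex argument sketched above should do it, but one must be a little careful that at each step the ``new'' forced short edge is genuinely between consecutive vertices of the copy (in the induced order) and not already counted, so that the $r-1$ short edges chain the copy together into an interval of length at most $(r-1)(a-1)$. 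Everything else is a routine first-moment computation exactly paralleling Proposition~\ref{lower:cycles}.
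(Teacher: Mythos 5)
Your proposal follows the same strategy as the paper: set $a = p^{-(r+1)/2}/\omega$, orient all $a$-long edges forward, show the expected number of ``dangerous'' copies of $K_r$ is $O\big(p^{\binom{r}{2}} a^{r-1} n\big) \ll pan$, and free up $\Theta(pan)$ short edges by Markov's inequality. The parameter choice and the first-moment bound are correct (modulo a sign slip: $a^{r-1}p^{\binom{r}{2}} = \Theta\big(\omega^{-(r-1)} p^{-(r-1)/2}\big)$, not $\Theta\big(\omega^{-(r-1)}p^{(r-1)/2}\big)$; the corrected ratio of expected dangerous copies to $a$-short edges is $\Theta(\omega^{-(r-2)}) \to 0$, so the conclusion stands).

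The one genuine gap is in your justification of the structural claim that a dangerous copy of $K_r$ has spread $O(ra)$. You argue by iterating the observation at the leftmost vertex to the second-leftmost, third-leftmost, etc., but the iteration breaks at the very first step: while the leftmost vertex $v_1$ must receive a short backward in-edge, the second-leftmost vertex $v_2$ need not, since its unique in-neighbour in the tournament could be $v_1$ itself (e.g.\ in the $3$-cycle $v_1 \to v_2 \to v_3 \to v_1$). The claim is true and easy to prove correctly. The paper's argument: strong connectivity provides a directed path from the rightmost vertex $w$ to the leftmost vertex $v$ using at most $r-1$ edges; the net leftward displacement $w - v$ must be covered by the backward (hence $a$-short) edges on this path, giving $w - v \le (r-1)(a-1)$. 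An even simpler alternative is a cut argument: if some consecutive gap satisfied $v_{i+1} - v_i \ge a$, then every edge between $\{v_1,\dots,v_i\}$ and $\{v_{i+1},\dots,v_r\}$ would be $a$-long and hence oriented forward, leaving no directed path back across the cut, contradicting strong connectivity. Either repairs the step; the rest of your argument then goes through.
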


\begin{proof}
By Proposition~\ref{lower:acyclic}, we may again assume that $p = o(1)$. Set $a = p^{-(r+1)/2} / \omega = o(n)$. Denoting by $v$ and $w$ the
leftmost and rightmost vertices of a strongly connected copy of $K_r$,
as in Proposition~\ref{lower:cycles}, we see that $|w - v| \leq
(r-1)(a-1)$. Therefore, all vertices in a ``dangerous'' copy of $K_r$ are within distance $O(a)$ of each other. The expected number of such copies of $K_r$ in $G(n,p)$ is $O\big(p^{\binom{r}{2}} a^{r-1} n \big) \ll pan$, so the claimed bound again follows as before.
\end{proof}

Finally, the lower bounds of the form $\big(1 + o(1) \big) p\binom{n}{2}$ in Theorems~\ref{thm:transitive} and~\ref{thm:strongly} follow easily from the observation that, by Markov's inequality, if $p \ll n^{-2/(r+1)}$ then with high probability $G(n,p)$ contains $o(pn^2)$ copies of $K_r$. Indeed, if we orient the edges that are contained in a copy of $K_r$ according to a fixed linear order of the vertex set, then we may orient the remaining edges arbitrarily.

\section{Upper bound for triangles}\label{sec:upper:triangle}

In this section we prove the following theorem, which implies the upper bound in Theorem~\ref{thm:triangles}, and whose proof contains the key idea introduced in this paper.

\begin{thm}\label{thm:triangles:upper}
Let $0 < p \leq 1$. Then, with high probability as $n \to \infty$,
\[D\big(G(n,p), \circ_{3} \big) \le \exp\bigg( \frac{6n{(\log n)}^2}{p} \bigg).\]
 \end{thm}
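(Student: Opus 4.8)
The plan is to bound the number of $\circ_3$-free orientations of $G = G(n,p)$ by a counting argument that exploits the rigidity forced by avoiding cyclic triangles. The key observation is that in a $\circ_3$-free orientation, every triangle is transitive, which means the orientation behaves locally like a partial order: if $u \to v$ and $v \to w$ and $uw \in E(G)$, then necessarily $u \to w$. So the orientation of an edge is forced by "transitivity closure" along triangles. The idea is to find a small set $W \subseteq V(G)$ such that the orientation of all edges \emph{not} incident to $W$ is determined by the orientation of edges incident to $W$; then the number of orientations is at most $2^{e(W)} \cdot (\text{something small})$, where $e(W)$ is the number of edges incident to $W$. Choosing $|W|$ of order roughly $(\log n)/p$ would give $e(W) = O(|W| \cdot np) = O(n\log n)$, which is too weak by a logarithmic factor; the correct threshold should come from a more careful iterative/entropy argument yielding $O(n(\log n)^2/p)$.

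Concretely, I would proceed as follows. First, I would show that with high probability $G(n,p)$ has the property that for every pair of vertices $u,v$ with $uv \notin E(G)$ that lie in a common "short path structure," and indeed for most edges, the edge orientation is determined once we fix the orientation on a bounded-size "seed." The cleanest route: reveal the orientations edge by edge in a carefully chosen order, and at each step bound the number of choices by $2$ if the edge is "free" and $1$ if it is "forced" by a transitive triangle with two already-oriented edges. Then $\log_2 D(G,\circ_3) \le (\text{number of free edges in the worst ordering})$. To make the number of free edges small, I would use the following structural fact about $G(n,p)$ for $p$ not too small: a typical vertex $v$ has $\Theta(np)$ neighbours, and among any two neighbours $x,y$ of $v$, the edge $xy$ is present with probability $p$, so $v$ "controls" roughly $\binom{np}{2} p \approx (np)^2 p/2$ edges via transitivity. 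Processing vertices one at a time in decreasing order of their "new controlled edges," after processing about $k$ vertices we have forced all but $O(n^2/(kp))$... — this heuristic suggests $k \approx n/p$ suffices, again off by logs, so the actual argument must be an amortized one where each of the first $\approx (\log n)/p \cdot \log n$ vertices forces a $(1 - \Omega(1/\log n))$ fraction of the remaining edges, geometric-series style, giving the $(\log n)^2$.

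The main obstacle, and the place where the "key idea introduced in this paper" presumably lies, is making the forcing argument quantitatively tight: one needs that a \emph{random} orientation restricted to a small vertex set, together with transitivity, propagates to determine \emph{almost all} edges of $G(n,p)$, and one needs the right concentration/expansion statement about $G(n,p)$ to drive the iteration. I would isolate a lemma of the form: with high probability, for every subset $U \subseteq V$ with $|U| \ge (\log n)/p$, and every orientation of $G[U]$ that is $\circ_3$-free, the number of edges of $G$ with at least one endpoint outside $U$ whose orientation is \emph{not} forced by $G[U]$ is at most $n^2 p / \log n$ (or similar), and then apply it iteratively, peeling off one extra vertex-batch at a time, so that the total number of "free" binary choices telescopes to $O(n(\log n)^2/p)$. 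The delicate point is uniformity over all $\circ_3$-free orientations of $G[U]$ — this is where a union bound over the $\le 2^{e(U)} \le 2^{O(n\log n)}$ possible seed-orientations must be beaten by an exponentially small failure probability coming from the randomness of the edges outside $U$, which forces the precise choice of the batch sizes and the $(\log n)^2$ in the exponent.
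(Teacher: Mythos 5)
There is a genuine gap: you never arrive at a concrete argument, and the key structural idea is missing. Your proposal oscillates between heuristics (choose a small vertex set $W$; process vertices in some clever order; peel off batches geometrically) without isolating a lemma that actually closes the count, and you explicitly flag the sticking point yourself (``the main obstacle\dots is making the forcing argument quantitatively tight''). Your candidate lemma --- ``whp, for every $U$ with $|U|\geq (\log n)/p$ and every $\circ_3$-free orientation of $G[U]$, at most $n^2p/\log n$ edges outside $U$ are unforced'' --- is both unproved and structurally not what is needed: there is no reason a single small vertex set should determine almost all of $G(n,p)$ via triangle-transitivity, and the union-bound obstacle you raise (beating $2^{e(U)}$ seed orientations with the randomness of $G$) is real if you insist on a probabilistic forcing statement. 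You have also muddled a calculation: with $|W|\approx (\log n)/p$ you write $e(W)=O(n\log n)$ and call this ``too weak by a logarithmic factor,'' but $n\log n$ is \emph{smaller} than $n(\log n)^2/p$ for small $p$; the problem is not that the resulting bound would be weak, it is that the premise (that such a determining set $W$ exists) is unjustified.

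The paper's route is both cleaner and essentially orthogonal to the probabilistic-forcing picture you have in mind. It proves a purely \emph{deterministic} lemma: for any graph $G$ on $n$ vertices and any $\circ_3$-free orientation $\vec G$, there is a seed set $S\subset E(\vec G)$ of size at most $2n\cdot\alpha(G)$ such that $\vec G$ is the unique $\circ_3$-free orientation of $G$ containing $S$. The lemma is proved by induction on $n$: having a seed $S'$ that determines $\vec G - v$, one takes a minimal set $T$ of edges at $v$ whose addition determines $\vec G$, and observes that the heads of $T^+=\{(v,w)\in T\}$ must form an \emph{independent set} --- if $(v,u),(v,w)\in T^+$ and $uw\in E(G)$, then the orientation of $uw$ is already fixed by $S'$, and transitivity of the triangle $uvw$ forces the orientation of one of $vu,vw$ from the other, contradicting minimality. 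Hence $|T^+|\leq\alpha(G)$, likewise $|T^-|$, so $|T|\leq 2\alpha(G)$ and the induction gives $|S|\leq 2n\alpha(G)$. The only probabilistic input is then the standard whp bound $\alpha(G(n,p))\leq 3(\log n)/p$, after which $D(G,\circ_3)\leq\sum_{i\leq 2n\alpha}\binom{e(G)}{i}2^i$ yields $\exp(6n(\log n)^2/p)$. In short, the missing idea is that the seed is built \emph{per vertex} with the independent-set observation controlling the per-vertex cost by $\alpha(G)$; this replaces the entire union-bound/expansion machinery you were contemplating, and there is no iterative or geometric-series argument.
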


 In order to prove Theorem~\ref{thm:triangles:upper}, we start with the simple
 (but key) observation that knowing the orientation of two edges of a triangle
 in a $\circ_{3}$-free graph is sometimes enough to uniquely determine the
 orientation of the third edge. Given any $\circ_{3}$-free orientation $\vec{G}$
 of $G(n,p)$, we will use this fact to find a small set of ``non-redundant''
 edges $S \subset E(\vec{G})$ such that the prescribed orientation $\vec{G}$ is
 uniquely determined by $S$. This is formalized in the deterministic lemma
 below. We remark that the next sections will use similar deterministic
 statements (in Claim~\ref{claim:trans}, Claim~\ref{claim:strongly}, and
 Lemma~\ref{lemma:cycles:upper}).

\begin{lemma}\label{lemma:triangles:upper}
Let $\vec{G}$ be a $\circ_{3}$-free orientation of a graph $G$ on $n$ vertices. There exists a set $S \subset E(\vec{G})$ with
\[|S| \le 2 n \cdot \alpha(G)\]
such that $\vec{G}$ is the unique $\circ_{3}$-free orientation of $G$ containing $S$.
\end{lemma}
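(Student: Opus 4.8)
The plan is to prove Lemma~\ref{lemma:triangles:upper} by induction on $n = |V(G)|$, the case $n \le 1$ being vacuous (take $S = \emptyset$). For the inductive step I would fix an arbitrary vertex $v \in V(G)$ and apply the induction hypothesis to the $\circ_3$-free orientation $\vec G - v$ of $G - v$, obtaining a set $S' \subseteq E(\vec G - v)$ with $|S'| \le 2(n-1)\,\alpha(G - v) \le 2(n-1)\,\alpha(G)$ such that $\vec G - v$ is the unique $\circ_3$-free orientation of $G - v$ containing $S'$. It then remains to adjoin to $S'$ a set of at most $2\alpha(G)$ edges incident to $v$ which, given that $\vec G - v$ is already pinned down, forces the orientation of every edge at $v$.

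Write $N^- = N^-_{\vec G}(v)$ and $N^+ = N^+_{\vec G}(v)$ for the in- and out-neighbourhoods of $v$. The key definition is: let $I^- \subseteq N^-$ be an independent set such that every vertex of $N^-$ can be joined to $I^-$ by a directed path in $\vec G[N^-]$ (the trivial path allowed), and symmetrically let $I^+ \subseteq N^+$ be independent with every vertex of $N^+$ reachable from $I^+$ by a directed path in $\vec G[N^+]$. Such sets exist: take one vertex from each terminal strongly connected component of $\vec G[N^-]$ (respectively, each source strongly connected component of $\vec G[N^+]$) — this set is independent because there is no edge between two distinct terminal components, and it is reached from every vertex because in the condensation of $\vec G[N^-]$ every vertex reaches a sink. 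I would then set $S = S' \cup \{u \to v : u \in I^-\} \cup \{v \to u : u \in I^+\}$. Since $I^-$ and $I^+$ are independent sets in $G$, we get $|S| \le 2(n-1)\alpha(G) + |I^-| + |I^+| \le 2(n-1)\alpha(G) + 2\alpha(G) = 2n\,\alpha(G)$, which is exactly the claimed bound.

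For uniqueness, let $\vec{G}'$ be any $\circ_3$-free orientation of $G$ with $S \subseteq E(\vec{G}')$. Since $S' \subseteq S$ contains no edge at $v$, the orientation $\vec{G}' - v$ contains $S'$, so the induction hypothesis gives $\vec{G}' - v = \vec G - v$; hence $\vec{G}'$ agrees with $\vec G$ on every edge not incident to $v$. To handle the edges at $v$, fix a neighbour $u$ of $v$ with $u \to v$ in $\vec G$ (the case $v \to u$ being symmetric, via $I^+$). Choose a directed path $u = w_0 \to w_1 \to \dots \to w_k$ in $\vec G[N^-]$ with $w_k \in I^-$. All of its edges lie in $\vec G$ and avoid $v$, so $\vec{G}'$ orients them identically, and $w_k \to v$ holds in $\vec{G}'$ because that edge lies in $S$. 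Now I would cascade downwards: assuming $w_i \to v$ in $\vec{G}'$, the triangle $\{w_{i-1}, w_i, v\}$ carries $w_{i-1} \to w_i$ and $w_i \to v$ in $\vec{G}'$, so orienting its third edge as $v \to w_{i-1}$ would create a directed triangle, impossible since $\vec{G}'$ is $\circ_3$-free; hence $w_{i-1} \to v$ in $\vec{G}'$. After $k$ steps this yields $u = w_0 \to v$ in $\vec{G}'$. So $\vec{G}'$ agrees with $\vec G$ at every edge incident to $v$ as well, whence $\vec{G}' = \vec G$, closing the induction.

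The one genuinely delicate point — and where I expect the main difficulty to lie — is the choice of $I^-$ and $I^+$. The naive choice would be the set of sinks of $\vec G[N^-]$, but a $\circ_3$-free orientation of an induced subgraph can be, say, a directed $5$-cycle, and thus have no sink at all, in which case the cascade above would have nothing to start from. Passing to one vertex per terminal strongly connected component repairs this while keeping the set independent — hence of size at most $\alpha(G)$ — and this is precisely what makes the inductive count come out to $2n\,\alpha(G)$. Everything else (the base case, the bookkeeping in the size estimate, and the fact that the cascade only ever invokes edges that have already been determined by $S'$ or lie in $S$) is routine.
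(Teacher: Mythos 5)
Your proof is correct, and it reaches the bound by a route that differs from the paper's in an instructive way, though the core mechanism (triangle propagation: if two edges of a triangle at $v$ are forced, the third is forced) is shared. The paper argues non-constructively: it takes $T$ to be a \emph{minimal} set of edges at $v$ that, together with $S'$, pins down $\vec{G}$, and then observes that if two out-endpoints $u,w$ of $T$ were adjacent, say $u\to w$, then $(v,w)$ could be dropped since $v\to u$ and $u\to w$ already force $v\to w$; hence the out-endpoints (and symmetrically the in-endpoints) form an independent set, giving $|T|\le 2\alpha(G)$. You instead exhibit a concrete seed --- one representative per terminal strongly connected component of $\vec{G}[N^-(v)]$ and one per source component of $\vec{G}[N^+(v)]$ --- and verify by a cascade along directed paths in the neighbourhoods that this seed propagates to every edge at $v$; independence holds because no edges run between distinct terminal (respectively, source) components. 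Your cascade is essentially the contrapositive of the paper's pruning step: you use triangle propagation to confirm that a candidate seed suffices, whereas the paper uses it to shrink an oversized one. The constructive version gives more structural information (exactly which edges to store), at the modest cost of invoking the SCC decomposition; the paper's version is a few lines shorter. Both are entirely valid, and your remark about the directed $5$-cycle correctly identifies the pitfall (no sinks in $\vec{G}[N^-(v)]$) that passing to terminal SCCs circumvents.
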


\begin{proof}
We prove the lemma by induction on $n$. For $n = 1$ it is trivial, so let $n \ge 2$ and assume that the statement is true for $n - 1$. Pick a vertex $v \in V(G)$, and let $S'$ be the set given by the lemma applied to $\vec{G'} = \vec{G} - v$. Note that $|S'| \le 2 (n-1) \alpha(G)$, since $\alpha(G') \le \alpha(G)$, and that $\vec{G'}$ is the unique $\circ_{3}$-free orientation of $G'$ containing $S'$.

Now, let $T \subset E(\vec{G}) \setminus E(\vec{G'})$ be minimal such that $\vec{G}$ is the unique $\circ_{3}$-free orientation of $G$ containing $S' \cup T$.
 (Note that such a set exists, since $E(\vec{G}) \setminus E(\vec{G'})$ has this property.) Set $T^+ := \{(v,w) : (v,w) \in T \}$ be the edges of $T$ oriented away from $v$; we claim that
\begin{equation}\label{eq:Tplus:bound}
|T^+| \le \alpha(G).
\end{equation}
Indeed, suppose that there exists an edge $(u,w) \in E(\vec{G'})$ such
that $(v,u),(v,w) \in T^+$. Recall that only one orientation of the
edge $uw$ can appear together with $S'$ in a $\circ_3$-free graph, and
therefore every $\circ_3$-free graph containing
$T \setminus \{(v,w)\}$ contains the edge $(v,w)$, as we can deduce
the orientation of $vw$ from those of $uv$ and $uw$. Thus, setting
$T' := T \setminus \{(v,w)\}$, it follows that $\vec{G}$ is the unique
$\circ_{3}$-free orientation of $G$ containing $S' \cup T'$,
contradicting the minimality of $T$. Hence the set
$\{w : (v,w) \in T^+\}$ must in fact be independent, and therefore we
have $|T^+| \le \alpha(G)$, as claimed.

To complete the proof, simply note that the same bound holds for $T^- := T \setminus T^+$, by symmetry, and hence, setting $S := S' \cup T$, we have $|S| \le 2 n \cdot \alpha(G)$, as required.
\end{proof}

We remark that Lemma~\ref{lemma:triangles:upper} can be stated as an extremal result about a deterministic process that resembles graph bootstrap percolation (also known as weak saturation), see e.g.~\cites{bollobas1968weakly,BBM}.

The following corollary is an immediate consequence of Lemma~\ref{lemma:triangles:upper}.

\begin{cor}\label{cor:upper}
 A graph $G$ on $n$ vertices admits at most
 \[\sum_{i=0}^{2n\alpha(G)} \binom{e(G)}{i} 2^i\]
 $\circ_{3}$-free orientations.
 \end{cor}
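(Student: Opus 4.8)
The goal is to bound the number of $\circ_3$-free orientations of $G$ by counting over all possible ``certificate'' sets $S$. By Lemma~\ref{lemma:triangles:upper}, every $\circ_3$-free orientation $\vec{G}$ of $G$ is determined by some set $S\subseteq E(\vec G)$ of oriented edges with $|S|\le 2n\cdot\alpha(G)$: the orientation $\vec G$ is the unique $\circ_3$-free orientation of $G$ extending $S$. Hence the map sending each $\circ_3$-free orientation to such a certificate set is injective, and it suffices to count the certificates.

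A certificate consists of two pieces of data: first, the underlying set $F\subseteq E(G)$ of at most $2n\cdot\alpha(G)$ edges on which $S$ lives; second, an orientation of each edge in $F$. For a fixed size $i$ with $0\le i\le 2n\alpha(G)$, there are $\binom{e(G)}{i}$ ways to choose the edge set and $2^i$ ways to orient those $i$ edges, giving $\binom{e(G)}{i}2^i$ certificates of size exactly $i$. Summing over all admissible sizes $i$ yields the bound
\[
\sum_{i=0}^{2n\alpha(G)}\binom{e(G)}{i}2^i.
\]
(One can take $\binom{e(G)}{i}=0$ for $i>e(G)$, so there is no issue if $2n\alpha(G)$ exceeds $e(G)$.)

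There is essentially no obstacle here; the only thing to be slightly careful about is that distinct $\circ_3$-free orientations yield distinct certificates. This is immediate from the uniqueness clause in Lemma~\ref{lemma:triangles:upper}: if two $\circ_3$-free orientations shared a certificate set $S$, then each would be \emph{the} unique $\circ_3$-free orientation of $G$ containing $S$, forcing them to coincide. Thus the count of $\circ_3$-free orientations is at most the count of certificates, which is the displayed sum, completing the proof of the corollary. (In the next section this crude sum will be estimated, using $\alpha(G(n,p))=O(p^{-1}\log n)$ and $e(G(n,p))=O(pn^2)$ with high probability, to deduce Theorem~\ref{thm:triangles:upper}.)
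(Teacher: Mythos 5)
Your proof is correct and is precisely the standard counting argument that the paper intends when it calls the corollary an ``immediate consequence'' of Lemma~\ref{lemma:triangles:upper}: each $\circ_3$-free orientation is encoded injectively by a certificate set of at most $2n\alpha(G)$ oriented edges, and the displayed sum counts all such certificates. Nothing is missing.
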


We can now prove Theorem~\ref{thm:triangles:upper}.

\begin{proof}[Proof of Theorem~\ref{thm:triangles:upper}]
Note that if $p^2 n \ll {(\log n)}^2$ then the trivial bound $D( G, \circ_{3} )
\le 2^{e(G)}$ is sufficient, so we may assume this is not the case. It follows
that, with high probability, we have $e\big(G(n,p) \big) = \big(1 + o(1)\big) p
\binom{n}{2}$ and $\alpha\big(G(n,p) \big) \le \frac{3 \log n}{p}$ (by the first
moment method, since the expected number of independent sets of this size is
$o(1)$). Therefore, by Corollary~\ref{cor:upper},
\begin{align*}
D\big(G(n,p), \circ_{3} \big) & \le \sum_{i=0}^{2n \,\cdot\, \alpha( G(n,p) )} \binom{e\big(G(n,p)\big)}{i} 2^i \\
& \le \frac{6n\log n}{p} \cdot \binom{pn^2}{\frac{6n\log n}{p}} \cdot n^{6n/p} \le \exp\bigg( \frac{6n{(\log n)}^2}{p} \bigg)
\end{align*}
with high probability, as required.
\end{proof}

\section{Avoiding non-transitive tournaments}\label{sec:upper:transitive}

Recall that $T_r(n,p)$ denotes the number of orientations of $G(n,p)$ in which every copy of $K_r$ is transitive. In this section we prove the following two theorems, which (together with Theorem~\ref{thm:triangles:upper}) imply the upper bounds in Theorem~\ref{thm:transitive}.

\begin{thm}\label{thm:cliques:upper:top}
Let $r \ge 4$. If $p > n^{-2/(r+2)}$, then
\[T_r(n,p) \le \exp\bigg( \frac{O\big(n{(\log n)}^2 \big)}{p} \bigg)\]
with high probability as $n \to \infty$.
 \end{thm}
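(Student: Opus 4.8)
The plan is to follow the same strategy that worked for triangles in Theorem~\ref{thm:triangles:upper}, namely to show that in any orientation of $G(n,p)$ in which every $K_r$ is transitive, a small ``seed'' set of edges already determines the orientation of all remaining edges, and then to count the number of such orientations by counting the choices of seed. Concretely, I would first prove a deterministic lemma analogous to Lemma~\ref{lemma:triangles:upper}: if $\vec G$ is an orientation of an $n$-vertex graph $G$ in which every copy of $K_r$ is transitive, then there is a set $S \subseteq E(\vec G)$ with $|S| = O\big(n \cdot \ex(\alpha(G), K_{r-1})\big)$, or more simply $|S| = O\big(n \cdot \alpha(G)^{r-2}\big)$, such that $\vec G$ is the unique orientation of $G$ extending $S$ in which every $K_r$ is transitive. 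The proof is again by induction on $n$: remove a vertex $v$, take the seed $S'$ for $\vec G - v$, and then bound the number of edges at $v$ that must be added to $S'$ to force the orientation of all edges incident to $v$.

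The heart of the induction step is the bound on the ``forced'' edges at $v$. Split them into edges oriented out of $v$ and edges oriented into $v$; by symmetry it suffices to bound one side, say $T^+$, the set of forced out-edges $(v,w)$. The key observation is that if $w_1,\dots,w_{r-1}$ are endpoints of forced out-edges $(v,w_i)\in T^+$ and $\{w_1,\dots,w_{r-1}\}$ spans a clique in $G$, then — since the orientation of that $K_{r-1}$ on $\{w_1,\dots,w_{r-1}\}$ is already determined by $S'$ — the orientation of one of the edges $v w_i$ can be deduced from transitivity of the resulting $K_r$ (the out-edges to a clique force a transitive extension, and the ``last'' vertex in the transitive order on the $w_i$ cannot receive an edge from $v$ without creating a non-transitive $K_r$, so that edge is redundant). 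This contradicts minimality of the seed at $v$, exactly as in Lemma~\ref{lemma:triangles:upper}. Hence $\{w : (v,w)\in T^+\}$ contains no $K_{r-1}$, so $|T^+| \le \ex\big(\alpha\text{-many or }n\text{-many vertices}, K_{r-1}\big)$; using the trivial bound $\ex(m,K_{r-1}) \le m^2/2$ together with $m \le n$ would only give $O(n^2)$, so instead I want to restrict attention to the neighbourhood structure and use the independence number: the point is that the out-neighbourhood of $v$ (intersected with the relevant vertices) has clique number bounded via $K_{r-1}$-freeness of the forced set, and one argues $|T^+| = O\big(\alpha(G)^{r-2}\big)$ by a Ramsey/Turán-type estimate. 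I would set up the bookkeeping so that the total seed size is $O\big(n\,\alpha(G)^{r-2}\big)$.

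Given the deterministic lemma, the probabilistic part mirrors the proof of Theorem~\ref{thm:triangles:upper} exactly. With high probability $e(G(n,p)) = (1+o(1))p\binom n2$ and $\alpha(G(n,p)) \le 3(\log n)/p$. If $p \le$ some threshold making $p^2 n$ small relative to a power of $\log n$ the trivial bound $2^{e(G)}$ suffices, so we may assume $p > n^{-2/(r+2)}$ as hypothesised. The number of transitive-$K_r$ orientations is then at most
\[
\sum_{i=0}^{O(n\alpha^{r-2})} \binom{e(G)}{i} 2^i
\le \left(\frac{O(pn^2)}{1}\right)^{O(n\alpha^{r-2})}
= \exp\!\left(O\big(n\,\alpha^{r-2}\log n\big)\right),
\]
and substituting $\alpha \le 3(\log n)/p$ gives $\exp\big(O(n(\log n)^{r-1}/p^{r-2})\big)$. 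To land the claimed bound $\exp(O(n(\log n)^2)/p)$ one needs $\alpha^{r-2} = O(\alpha) \cdot$ (polylog), i.e. effectively that only $O(\alpha)$ forced edges appear at each vertex rather than $O(\alpha^{r-2})$; so the real target for the deterministic lemma should be the sharper $|S| = O(n\,\alpha(G))$, with the $K_{r-1}$-free forced set being handled by observing it must in fact be \emph{independent} once one also uses that $S'$ already fixes the orientations inside it — pushing the argument exactly as in the $r=3$ case but peeling off one transitive clique at a time. I expect this sharpening of the deterministic lemma — getting down to a linear-in-$\alpha$ seed at each vertex rather than the naive $\alpha^{r-2}$ — to be the main obstacle, and the natural fix is an iterated version of the redundancy argument (repeatedly removing the top vertex of a forced transitive sub-configuration) combined with the fact that transitivity of $K_r$ propagates orientations along chains of overlapping cliques.
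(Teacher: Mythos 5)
Your setup is on the right track, and you correctly diagnose the problem: a $K_{r-1}$-free bound on $T^+$ gives only $|T^+| = O(\alpha(G)^{r-2})$, which lands at $\exp\big(O(n(\log n)^{r-1}/p^{r-2})\big)$ rather than the claimed $\exp\big(O(n(\log n)^2/p)\big)$. But the proposed fix --- arguing that $T^+$ must ``in fact be independent'' by iterating the redundancy argument --- does not work. For $r=3$, two adjacent vertices of $T^+$ are automatically in a triangle with $v$ that must be transitive, hence the independence; for $r \geq 4$, two adjacent vertices $u,w\in T^+$ need not lie in \emph{any} copy of $K_r$, so the triangle $uwv$ is under no transitivity constraint and an edge inside $T^+$ creates no contradiction. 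No amount of ``peeling off transitive subcliques'' changes this: the constraint you derived is genuinely only $K_{r-1}$-freeness, and in an arbitrary graph with independence number $\alpha$ that permits $\Theta(\alpha^{r-2})$ vertices.

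What you are missing is that the deterministic claim should be conditioned on a \emph{random} property of $G(n,p)$ rather than phrased purely in terms of $\alpha(G)$. The paper's Lemma~\ref{lemma:useJanson} (a Janson-inequality estimate) shows that w.h.p.\ for every $v$ and every $T\subset N(v)$ with $|T|>t_r(n,p)$ there exist two vertices $u,w\in T$ lying in a common $K_r$ with $v$ --- where the other $r-3$ vertices of that $K_r$ may lie \emph{anywhere} in $G(n,p)$, not inside $T$. Since all copies of $K_r$ are transitive, the triangle $uwv$ is then transitive, and the redundancy argument applies exactly as in the $r=3$ case. So the correct condition on $T^+$ is: no two vertices of $T^+$ lie in a common $K_r$ with $v$. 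This is a far stronger constraint than $K_{r-1}$-freeness, and in $G(n,p)$ with $p>n^{-2/(r+2)}$ it forces $|T^+|\le t_r(n,p)=C(\log n)/p$, which is of the same order as $\alpha(G(n,p))$ and gives the stated exponent. The key missing idea in your proposal is to use the ambient random graph to supply the remaining $r-3$ vertices of the $K_r$, rather than insisting the entire clique structure live inside $T^+$.
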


\begin{thm}\label{thm:cliques:upper:middle}
  Let $r \ge 4$. If $p \le n^{-2/(r+2)}$, then
  \[T_r(n,p) \le \exp\bigg( \frac{O\big(n^{4-r} {(\log n)}^2 \big)}{p^{\binom{r}{2} - 2}} \bigg)\]
with high probability as $n \to \infty$.
\end{thm}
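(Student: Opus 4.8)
The plan is to mimic the proof of Theorem~\ref{thm:triangles:upper}, replacing the role of the independence number $\alpha(G)$ by the right parameter for the ``non-transitive $K_r$'' process. Recall the key point exploited in Lemma~\ref{lemma:triangles:upper}: if the orientations of all but one edge of a dangerous triangle are already forced (together with the set $S'$), then the orientation of the last edge is forced too, because there is a unique consistent completion. The same holds here: in a transitive tournament on $r$ vertices, once $\binom{r}{2}-1$ of the edges are oriented the last one is determined (any orientation of the missing edge either keeps the tournament transitive or creates a cyclic triangle, hence a non-transitive $K_r$; and there is exactly one transitive completion). So I would run the induction on $n$: delete a vertex $v$, apply the inductive hypothesis to $\vec G-v$ to obtain a forcing set $S'$, take $T\subseteq E(\vec G)\setminus E(\vec G-v)$ minimal with $S'\cup T$ forcing $\vec G$, and bound $|T^+|$ and $|T^-|$ as before. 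The bound on $|T^+|$ now comes from the observation that the set $W=\{w:(v,w)\in T^+\}$ cannot contain $r-1$ vertices spanning a copy of $K_{r-1}$ in $G-v$ all of whose internal edges are ``determined by $S'$'': if it did, then after deleting any one edge $(v,w)$ from $T$, the orientation of $vw$ would still be forced by the transitivity requirement on that $K_r$, contradicting minimality. Hence $W$ is $K_{r-1}$-free (more precisely, induces no $K_{r-1}$ in $G$), so $|T^+|\le \ex(n,K_{r-1})$ is too weak — instead we get $|W|\le$ the size of the largest $K_{r-1}$-free subset, which for $G(n,p)$ is governed by the $K_{r-1}$-free independence-type number.

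Concretely, let $\alpha_{r-1}(G)$ denote the largest size of a vertex set inducing no $K_{r-1}$ in $G$; then the induction yields a forcing set of size $|S|\le 2n\cdot\alpha_{r-1}(G)$, and the analogue of Corollary~\ref{cor:upper} gives
\[
T_r(n,p)\le \sum_{i=0}^{2n\alpha_{r-1}(G)}\binom{e(G)}{i}2^i.
\]
So the entire theorem reduces to a Ramsey-type estimate: with high probability, for $p\le n^{-2/(r+2)}$,
\[
\alpha_{r-1}\big(G(n,p)\big)=O\!\left(\frac{n^{3-r}(\log n)}{p^{\binom{r}{2}-\binom{r-1}{2}-1}}\right)
=O\!\left(\frac{n^{3-r}\log n}{p^{\,r-2}}\right),
\]
since then $2n\alpha_{r-1}=O\big(n^{4-r}(\log n)\,p^{-(r-2)}\big)$ and plugging into the sum (using $e(G)=O(pn^2)$ and $\binom{m}{k}2^k\le (2em/k)^k$) produces exactly $\exp\big(O(n^{4-r}(\log n)^2)/p^{\binom{r}{2}-2}\big)$ after checking that the exponent of $p$ works out, i.e.\ that $(r-2)\cdot(\text{number of terms})$ matches $\binom r2-2$ up to the $n$-powers; this is a routine exponent bookkeeping of the same flavour as in the proof of Theorem~\ref{thm:triangles:upper}.

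The main obstacle is the Ramsey estimate on $\alpha_{r-1}(G(n,p))$: one needs the largest $K_{r-1}$-free induced subgraph to have the stated size with high probability, which is a union-bound-plus-counting argument showing that a vertex set $U$ of the threshold size typically contains a copy of $K_{r-1}$ (so that \emph{no} such $U$ is $K_{r-1}$-free). Here the regime $p\le n^{-2/(r+2)}$ is exactly what is needed to make the first-moment bound over all $\binom nu$ choices of $U$ go through — this is where the precise exponent $2/(r+2)$ enters, and getting the threshold for $u=|U|$ right (it should be roughly $u$ such that $u^{r-1}p^{\binom{r-1}{2}}\gg u^2\log n$ or similar, ensuring both existence of a $K_{r-1}$ in $U$ and the union bound over $U$'s) is the delicate part. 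Everything else — the deterministic forcing lemma, its corollary, and the final substitution — is a direct transcription of Section~\ref{sec:upper:triangle} with ``triangle'' replaced by ``$K_{r-1}$ inside a $K_r$'', and I expect it to be, as the authors remark, no harder than the $r=3$ case once the correct Ramsey parameter is identified.
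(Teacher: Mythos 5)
Your deterministic forcing scheme is the right framework, but the parameter you plug in is both misestimated and, even when estimated correctly, too coarse to give the stated bound.

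First, the estimate on $\alpha_{r-1}$ is wrong. The largest $K_{r-1}$-free vertex set in $G(n,p)$ has size $\widetilde\Theta\big(p^{-(r-1)/2}\big)$ — this is precisely the quantity $s_{r-1}(n,p) = O\big((\log n)^{1/(r-2)}/p^{(r-1)/2}\big)$ from Lemma~\ref{lemma:Kr:everywhere} — and it carries no polynomial dependence on $n$. Your claimed bound $\alpha_{r-1}(G(n,p)) = O\big(n^{3-r}(\log n)\,p^{-(r-2)}\big)$ cannot be right; for instance when $r=4$ and $p$ is near $n^{-1/3}$ it is $o(1)$, while the true value is $n^{\Theta(1)}$. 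Second, even with the correct estimate, bounding $|T^+|$ by $\alpha_{r-1}(G)$ only yields $T_r(n,p)\le \exp\!\big(O\big(n(\log n)^{1+1/(r-2)}\big)/p^{(r-1)/2}\big)$, which is exactly the bound Theorem~\ref{thm:strongly:upper} gives for the much larger quantity $S_r(n,p)$. In the regime $p\le n^{-2/(r+2)}$ this exponent is strictly bigger than the target $n^{4-r}(\log n)^2/p^{\binom r2-2}$ (for $r=4$, $p=n^{-1/3}$: compare $n^{1}p^{-3/2}\approx n^{3/2}$ with $p^{-4}\approx n^{4/3}$). So your route proves a weaker theorem.

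The idea you are missing is that you do not need $r-1$ vertices of $T^+$ to span a $K_{r-1}$ — you only need \emph{two} vertices $u,w\in T^+$ such that $\{v,u,w\}$ extends to a copy of $K_r$ using $r-3$ \emph{arbitrary} vertices of $G$. That $K_r$ must be transitive, hence so is the triangle $vuw$; since the orientation of $uw$ is already known from $S'$, exactly one of $(v,u),(v,w)$ is redundant in $T$ (if $u\to w$ then $v\to u\to w$ forces $v\to w$, and symmetrically). Because the other $r-3$ vertices of the forcing $K_r$ may be chosen anywhere in $[n]$ rather than inside $T^+$, the relevant threshold $t_r(n,p)$ — for every $T\subseteq N(v)$ of that size to contain such a pair — is the much smaller quantity $C p^{2-\binom r2}n^{3-r}\log n$ computed via Janson's inequality in Lemma~\ref{lemma:useJanson}, and it is this threshold that produces the $n^{4-r}/p^{\binom r2-2}$ exponent in the theorem. (One small side remark: your opening claim that a transitive $r$-tournament is determined by any $\binom r2-1$ of its edges is false — two consecutive vertices in the transitive order can be swapped — but your induction only needs the weaker fact that \emph{some} edge from $v$ to the clique is redundant, which is true; this is a minor blemish, not the source of the gap.)
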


Before proving these theorems, let us first note that a slightly weaker version of Theorem~\ref{thm:cliques:upper:top} follows easily from Theorem~\ref{thm:triangles:upper}. Indeed, if
\[p \gg n^{-2/(r+2)} {(\log n)}^{2/(r+2)(r - 3)},\]
then with high probability every triangle in $G(n,p)$ is contained in
a copy of $K_r$ (see~\cite{Sp90}), and hence every orientation of $G(n,p)$ in which every $K_r$ is transitive is also $\circ_{3}$-free.

In order to remove this polylogarithmic factor, and to prove
Theorem~\ref{thm:cliques:upper:middle}, we will use the following
slightly technical lemma, which follows easily from the Janson
inequalities (see, e.g.,~\cites{AlSp16,JaLuRu00}). For completeness,
we provide a proof in the appendix.

% Is the log factor tight? Can we construct a counterexample for
% smaller values of p?

\begin{lemma}\label{lemma:useJanson}
For each $r \geq 4$, there exists $C > 0$ such that if
\[p \gg n^{-2/(r+1)}{(\log n)}^{4/(r+1)(r-2)},\]
then the following holds with high probability as $n \to \infty$. Set
\begin{equation}
t_r(n,p) :=
 \left\{\begin{array}{cl}
Cp^{2 - \binom{r}{2}} n^{3-r} \log n & \text{ if } p \le n^{-2/(r+2)}\\
\frac{C \log n}{p} & \text{ if } p > n^{-2/(r+2)}.
\end{array}\right.
\end{equation}
For every $v \in V\big(G(n,p) \big)$ and every $T \subset N(v)$ of size at least $t_r(n,p)$, there exists a copy of $K_r$ in $G(n,p)$ containing $v$ and at least two vertices of $T$.
\end{lemma}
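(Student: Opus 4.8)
The plan is to fix a vertex $v$ and a set $T$ of $t:=t_r(n,p)$ vertices distinct from $v$, apply Janson's inequality to a suitable count of copies of $K_r$, and then take a union bound over all such pairs $(v,T)$. Concretely, let $X_{v,T}$ count the copies of $K_r$ in $G(n,p)$ whose vertex set has the form $\{v,u_1,u_2\}\cup B$ with $u_1,u_2\in T$ and $B\subseteq V\setminus(T\cup\{v\})$, $|B|=r-3$. If the lemma fails, witnessed by some $v$ and $T_0\subseteq N(v)$ with $|T_0|\ge t$, then for any $t$-subset $T\subseteq T_0$ we have $X_{v,T}=0$; hence it suffices to prove that $\Pr[X_{v,T}=0]=o\!\big(n^{-1}\binom{n}{t}^{-1}\big)$ uniformly over the (at most $n\binom{n}{t}$) choices of $(v,T)$. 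First I would reveal the edges incident to $v$, so that $N(v)$ is determined; we may assume $T\subseteq N(v)$ (otherwise there is nothing to prove) and, since $pn\gg\log n$, that $|N(v)|=(1+o(1))pn$ for every $v$. Conditioned on these edges, each copy counted by $X_{v,T}$ is a copy of $K_{r-1}$ in $G[V\setminus\{v\}]\sim G(n-1,p)$ on a vertex set $\{u_1,u_2\}\cup B$ as above, so
\[
  \mu:=\Ex[X_{v,T}]=\binom{|T|}{2}\binom{|N(v)\setminus T|}{r-3}\,p^{\binom{r-1}{2}}=\Theta_r\!\big(t^2(pn)^{r-3}p^{\binom{r-1}{2}}\big)=\Theta_r\!\big(t^2n^{r-3}p^{\binom{r}{2}-2}\big).
\]
A short computation substituting the definition of $t_r(n,p)$ (in either of its two ranges) gives $\mu=\Omega_r\!\big(C\,t\log n\big)$; in particular $\mu\ge 10\,t\log n$ once $C=C(r)$ is large enough.

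The heart of the argument is the dependency term $\overline{\Delta}=\sum\Pr[A_\sigma\cap A_{\sigma'}]$ of Janson's inequality, the sum running over ordered pairs of distinct copies $\sigma,\sigma'$ counted by $X_{v,T}$ that share an edge; since both contain $v$, this occurs exactly when their vertex sets meet in at least two vertices other than $v$. I would split this sum according to $i:=|\{u_1,u_2\}\cap\{u_1',u_2'\}|$ and $j:=|B\cap B'|$ (so $i+j\ge 2$), the contribution of type $(i,j)$ being $\Theta_r\!\big(t^{4-i}(pn)^{2(r-3)-j}\,p^{2\binom{r-1}{2}-\binom{i+j}{2}}\big)$. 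The dominant term is $(i,j)=(2,0)$ — two copies meeting in exactly the pair $\{u_1,u_2\}\subseteq T$ — which contributes $\Theta_r\!\big(\mu^2/(t^2p)\big)$; bounding the remaining $O_r(1)$ types against this one uses $t\ll pn$, and this is precisely where the polylogarithmic factor in the hypothesis $p\gg n^{-2/(r+1)}(\log n)^{4/(r+1)(r-2)}$ is used. (That factor is in any case necessary for the statement to be non-vacuous, being equivalent, up to the constant $C$, to $t_r(n,p)\le|N(v)|$.) Thus $\overline{\Delta}=O_r\!\big(\mu^2/(t^2p)\big)$, whence $\mu^2/\overline{\Delta}=\Omega_r(t^2p)$, and one checks directly from the definition of $t_r(n,p)$ that $t^2p=\Omega_r\!\big(C\,t\log n\big)$ in both ranges of $p$. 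I expect this $\overline{\Delta}$ estimate — identifying the dominant overlap type and checking that every other type is negligible — to be the main obstacle: it is a routine but somewhat lengthy clique-overlap calculation.

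To finish, apply Janson's inequality in the form $\Pr[X_{v,T}=0]\le\exp\!\big(-c\min\{\mu,\,\mu^2/\overline{\Delta}\}\big)$ for an absolute constant $c>0$. Combining the two estimates above, $\min\{\mu,\,\mu^2/\overline{\Delta}\}=\Omega_r\!\big(C\,t\log n\big)$, so taking $C=C(r)$ large enough gives $\Pr[X_{v,T}=0]\le\exp(-3t\log n)$, which comfortably beats $n\binom{n}{t}\le\exp\!\big((t+1)\log n\big)$. A union bound over all pairs $(v,T)$ then completes the proof, with the low-probability event that $|N(v)|\ne(1+o(1))pn$ for some $v$ absorbed into the high-probability statement.
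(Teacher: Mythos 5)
Your proposal is correct and follows essentially the same route as the paper's: fix $(v,T)$, apply Janson's inequality to the family of potential copies of $K_r$ through $v$ and exactly two vertices of $T$, and union bound over the $\le n\binom{n}{t}$ pairs. Your $\mu$ matches the paper's ($\Theta(t^2 n^{r-3}p^{\binom{r}{2}-2})$), and your overlap decomposition by $(i,j)$ corresponds to the paper's $(b,c)=(4-i,\,i+j+1)$, with the per-type contributions agreeing after the translation. The one genuine (small) simplification you introduce is conditioning on $N(v)$ and running Janson on copies of $K_{r-1}$ in $G[V\setminus\{v\}]$: this makes pairs of cliques that share only one non-$v$ vertex conditionally independent, so the overlap type $(i,j)=(0,1)$ — the paper's $(b,c)=(4,2)$ — simply does not appear in $\overline{\Delta}$, whereas the paper has to bound it separately via $\mu^2/\Delta(4,2)=\Theta(pn)$ together with $t\ll pn/\log n$. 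The paper instead shows $\mathcal{K}(2,2)$ and $\mathcal{K}(3,2)$ are empty and uses a monotonicity-in-$b$ claim to reduce to $b\in\{2,4\}$; your conditioning sidesteps this. Two remarks: first, verifying that all $O(r^2)$ overlap types are dominated by $(i,j)=(2,0)$ is exactly the ``routine but lengthy'' work you flag, and it does require more than ``$t\ll pn$'' — specifically $n^{c-3}p^{\binom{c}{2}-3}\ge1$ for $3\le c\le r-1$, which is where $p\gg n^{-2/(r+1)}$ enters, with the $(\log n)$-factor covering the boundary case $c=r-1$; second, your parenthetical claim that the $(\log n)^{4/((r+1)(r-2))}$ factor is ``equivalent'' to $t_r(n,p)\le|N(v)|$ is off by a square: $t_r\lesssim pn$ only needs $(\log n)^{2/((r+1)(r-2))}$, so the hypothesis is stronger than mere non-vacuity — but this does not affect the argument.
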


To prove Theorems~\ref{thm:cliques:upper:top}
and~\ref{thm:cliques:upper:middle}, we now simply repeat the proof of
Theorem~\ref{thm:triangles:upper}, replacing $\alpha(G)$ by
$t_r(n,p)$.
\begin{proof}[Proof of Theorems~\ref{thm:cliques:upper:top} and~\ref{thm:cliques:upper:middle}]
We begin with a deterministic claim corresponding to Lemma~\ref{lemma:triangles:upper}. Given $n \in \N$ and $p \in (0,1)$, let $G$ be a graph on $n$ vertices that satisfies the conclusion of Lemma~\ref{lemma:useJanson}, that is, for every $v \in V(G)$ and every $T \subset N(v)$ of size at least $t_r(n,p)$, there exists a copy of $K_r$ in $G$ containing $v$ and at least two vertices of $T$.

\begin{claim}\label{claim:trans}
Let $\vec{G}$ be an orientation of $G$ in which every $K_r$ is transitive. There exists a set $S \subset E(\vec{G})$ with
\[|S| \le 2 n \cdot t_r(n,p)\]
such that $\vec{G}$ is the unique orientation of $G$ containing $S$ in which every $K_r$ is transitive.
\end{claim}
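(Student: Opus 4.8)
The plan is to prove Claim~\ref{claim:trans} by induction on $n$, following the proof of Lemma~\ref{lemma:triangles:upper} essentially verbatim but with $\alpha(G)$ replaced by $t_r(n,p)$, and with the elementary independence argument replaced by an appeal to the hypothesis on $G$ (i.e.\ the conclusion of Lemma~\ref{lemma:useJanson}). For $n \le r$ there is nothing to prove, so I would fix $n > r$, assume the statement for graphs on fewer vertices, pick a vertex $v \in V(G)$, set $G' = G - v$ and $\vec{G}' = \vec{G} - v$, and let $S' \subseteq E(\vec{G}')$ be the set supplied by the induction hypothesis, so that $|S'| \le 2(n-1)\,t_r(n,p)$ and $\vec{G}'$ is the unique orientation of $G'$ containing $S'$ in which every $K_r$ is transitive. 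Exactly as in Lemma~\ref{lemma:triangles:upper}, I would then take $T \subseteq E(\vec{G}) \setminus E(\vec{G}')$ minimal such that $\vec{G}$ is the unique orientation of $G$ containing $S' \cup T$ with every $K_r$ transitive, write $T^+ = \{(v,w) \in T\}$ and $T^- = T \setminus T^+$, and set $S = S' \cup T$; it then suffices to show $|T^+| \le t_r(n,p)$, as the bound $|T^-| \le t_r(n,p)$ follows by reversing all orientations, and then $|S| \le 2n\,t_r(n,p)$.

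The heart of the argument is the bound on $|T^+|$. Suppose for contradiction that $\{w : (v,w) \in T^+\}$ has size at least $t_r(n,p)$. Since this set lies in $N(v)$, the hypothesis on $G$ yields a copy of $K_r$ on a vertex set $\{v\} \cup Q$, with $|Q| = r-1$ and two distinct vertices $u, w \in Q$ for which $(v,u),(v,w) \in T^+$; relabelling, I may assume $\vec{G}$ orients $uw$ from $u$ to $w$. Now let $\vec{H}$ be any orientation of $G$ that contains $S' \cup (T \setminus \{(v,w)\})$ and in which every $K_r$ is transitive. Its restriction to $G'$ is an orientation of $G'$ containing $S'$ in which every $K_r$ is transitive (every $K_r$ of $G'$ is a $K_r$ of $G$), so by the induction hypothesis $\vec{H}$ agrees with $\vec{G}$ on every edge of $G'$; in particular it orients all the edges inside $Q$ as $\vec{G}$ does, so $\vec{H}$ orients $uw$ from $u$ to $w$. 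Also $(v,u) \in T^+ \subseteq T$ and $(v,u) \ne (v,w)$, so $\vec{H}$ contains the edge $(v,u)$. Since the clique on $\{v\}\cup Q$ is transitively oriented in $\vec{H}$ its orientation is a total order, and $v \to u \to w$ forces $v \to w$; hence $\vec{H}$ contains $(v,w)$, so it contains $S' \cup T$ and therefore equals $\vec{G}$. Thus $\vec{G}$ is already the unique orientation of $G$ containing the smaller set $S' \cup (T \setminus \{(v,w)\})$ in which every $K_r$ is transitive, contradicting the minimality of $T$. This gives $|T^+| \le t_r(n,p)$, and the symmetric argument (now removing from $T$ the edge $(u,v)$, where $u \to w$ in $\vec{G}$, and deducing it from $u \to w$ and $w \to v$) gives $|T^-| \le t_r(n,p)$.

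The part that needs care, and that differs from the triangle case, is precisely the use of the $K_r$ supplied by the hypothesis: here it has $r-2$ extra vertices besides $u$ and $w$, and one must make sure all of its edges not incident to $v$ are already determined by $S'$ — which is exactly why the $K_r$ is taken with $u, w \in N(v)$, so that its other $r-1$ vertices lie in $G'$ — and then exploit that a transitive tournament is a total order to read off the orientation of $vw$ from those of $vu$ and $uw$ alone (no control over the remaining $\binom{r-1}{2} - 1$ edges is needed). The only remaining, purely routine, point is that the induced subgraphs $G - v$, $G - v - v'$, \dots\ arising along the induction inherit the hypothesis with the relevant parameter; for $G = G(n,p)$ this follows from the proof of Lemma~\ref{lemma:useJanson} together with a union bound over the deleted vertices, and is the only additional probabilistic input. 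Once Claim~\ref{claim:trans} is in hand, Theorems~\ref{thm:cliques:upper:top} and~\ref{thm:cliques:upper:middle} follow exactly as Theorem~\ref{thm:triangles:upper} did from Lemma~\ref{lemma:triangles:upper}, by bounding the number of orientations by $\sum_{i \le 2n\,t_r(n,p)} \binom{e(G)}{i} 2^i$ and substituting the typical values of $e(G(n,p))$ and $t_r(n,p)$.
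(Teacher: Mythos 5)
Your proof has the right skeleton --- remove one vertex at a time, take a minimal determining set $T$, and derive a contradiction from Lemma~\ref{lemma:useJanson} together with the transitivity of a relevant triangle --- and the step that bounds $|T^+|$ is exactly the paper's. However, the point you flag as ``routine'' is the genuine obstacle, and it is precisely where the paper's formulation differs from yours. Your induction hypothesis requires $G' = G - v$ to itself satisfy the conclusion of Lemma~\ref{lemma:useJanson}: that for every $v'$ and every $T' \subseteq N_{G'}(v')$ of size at least $t_r(n,p)$ there is a copy of $K_r$ \emph{inside $G'$} through $v'$ and two vertices of $T'$. This is not inherited from $G$, since deleting $v$ may destroy every witnessing $K_r$. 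The probabilistic patch you suggest --- rerun Janson on the nested induced subgraphs and union-bound --- also does not go through once the subgraph has size $o(n)$: the Janson mean $\mu$ scales roughly as $(|W|/n)^{r-3}$, so for $|W|$ of size, say, $n/\mathrm{polylog}(n)$ one no longer has $\mu \gg t_r(n,p)\log n$, which is what was needed to beat the union bound over the $n^{t}$ choices of $T'$; and in the range $t_r(n,p) \le |W| \ll n$ the hypothesis is neither vacuous nor recoverable, so the induction stalls.

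The paper avoids this with a small but essential reformulation of the inductive invariant: after $k$ vertices have been processed, $S_k$ is chosen so that $\vec{G}[\{v_1,\dots,v_k\}]$ is the unique orientation of $G[\{v_1,\dots,v_k\}]$ containing $S_k$ in which every triangle that is contained in a copy of $K_r$ \emph{in the ambient graph $G$} is transitive. This condition only ever refers to copies of $K_r$ in the full graph $G$, so the only probabilistic input needed at every stage of the induction is the conclusion of Lemma~\ref{lemma:useJanson} for $G$ itself: the $K_r$ it supplies through $v_{k+1}$ and two vertices $u,w \in T^+$ may well use vertices not yet added, but all you need is that the triangle $\{v_{k+1},u,w\}$ --- being contained in some $K_r$ of $G$ --- is forced to be transitive, which together with the already-determined orientation of $uw$ pins down one of the two edges at $v_{k+1}$. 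At the final step the invariant coincides with ``every $K_r$ of $G$ is transitive'' (a tournament is transitive iff every one of its triangles is), giving the claim. Replacing your ``every $K_r$ of the induced subgraph is transitive'' by ``every triangle contained in a $K_r$ of $G$ is transitive'' as the inductive statement turns your argument into the paper's and closes the gap.
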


\begin{proof}[Proof of Claim~\ref{claim:trans}]
Observe first that, since every copy of $K_r$ in $G$ is transitive in $\vec{G}$,
every triangle in $G$ that is contained in a copy of $K_r$ is also transitive.
Fix an ordering $v_1, \ldots, v_n$ of $V(G)$, let
$k \in [n]$, and suppose that we have already found a set $S_k \subset E\big(\vec{G}\big[\{v_1,\ldots,v_k\} \big] \big)$ such that
\[|S_k| \le 2k \cdot t_r(n,p),\]
and $\vec{G}\big[ \{v_1,\ldots,v_k\} \big]$ is the unique orientation of $G\big[ \{v_1,\ldots,v_k\} \big]$ containing $S_k$ in which every triangle that is contained in a copy of $K_r$ in $G$ is transitive.

Now, let $S_{k+1} \subset E\big(\vec{G}\big[\{v_1,\ldots,v_{k+1}\} \big] \big)$ be minimal such that $S_{k+1} \supset S_k$, and such that $\vec{G}\big[ \{v_1,\ldots,v_{k+1}\} \big]$ is the unique orientation of $G\big[ \{v_1,\ldots,v_{k+1}\} \big]$ containing $S_{k+1}$ in which every triangle that is contained in a copy of $K_r$ in $G$ is transitive. Setting $T^+ := \{w : (v_{k+1},w) \in S_{k+1}\}$, we claim that
\begin{equation}\label{eq:Tplus:bound:trnp}
|T^+| \le t_r(n,p).
\end{equation}
Indeed, if $|T^+| > t_r(n,p)$ then there exists a copy of $K_r$ in $G$
containing $v_{k+1}$ and at least two vertices $u,w \in T^+$, and hence the
triangle $uwv_{k+1}$ must be transitive in $\vec{G}$. But this means that (as in
the proof of Lemma~\ref{lemma:triangles:upper}) we can deduce the orientation of
either $uv_{k+1}$ or $wv_{k+1}$ from that of the other, together with that of
$uw$, and hence $S_{k+1}$ is not minimal. This contradiction
proves~\eqref{eq:Tplus:bound:trnp}. Similarly, defining $T^- := \{w :
(w,v_{k+1}) \in S_{k+1}\}$, an analogous argument shows that $|T^-| \leq
t_r(n,p)$. Proceeding inductively, we obtain a set $S = S_n$ as claimed.
\end{proof}

We now prove Theorem~\ref{thm:cliques:upper:top}.  Suppose
$p > n^{-2/(r+2)}$. It follows from Claim~\ref{claim:trans} (and
Lemma~\ref{lemma:useJanson}) that, with high probability,
\[T_r(n,p) \le \sum_{i=0}^{2n \,\cdot\, t_r(n,p)} \binom{pn^2}{i} 2^i
  \le n^{3Cn/p} \cdot \binom{pn^2}{\frac{2Cn\log n}{p}} \le \exp\bigg( \frac{2Cn{(\log n)}^2}{p} \bigg),\]
as required.

To prove Theorem~\ref{thm:cliques:upper:middle}, we suppose from now
on that $p \le n^{-2/(r+2)}$. Note that if
$p^{\binom{r}{2} - 1} n^{r-2} \le {(\log n)}^2$ then the
trivial bound $2^{e(G)}$ is sufficient, so we may assume this is not
the case. It
follows that, with high probability, we have
$e\big(G(n,p) \big) = \big(1 + o(1)\big) p \binom{n}{2}$ and $G(n,p)$
satisfies the conclusion of Lemma~\ref{lemma:useJanson}.  Hence, by
Claim~\ref{claim:trans},
\[T_r(n,p) \le \sum_{i=0}^{2n \,\cdot\, t_r(n,p)} \binom{pn^2}{i} 2^i \le \exp\Big(O\big(t_r(n,p) \cdot n \log n \big) \Big)\]
with high probability, as required.
\end{proof}

\section{Avoiding strongly connected tournaments}\label{sec:strongly}

Recall that $S_r(n,p)$ denotes the number of orientations of $G(n,p)$ in which no copy of $K_r$ is strongly connected. In this section we prove the following theorems, which (together with Theorem~\ref{thm:triangles:upper}) imply the upper bound in Theorem~\ref{thm:strongly}.

\begin{thm}\label{thm:strongly:upper}
Let $r \ge 4$. If $n^{-2/(r+1)} \ll p \le {(\log n)}^{-2/(r-2)}$, then
\begin{equation}\label{eq:strongly:upper:general}
S_r(n,p) \le \exp\bigg( \frac{O\big(n {(\log n)}^{1 + 1/(r-2)} \big)}{p^{(r-1)/2}} \bigg)
\end{equation}
with high probability as $n \to \infty$.
\end{thm}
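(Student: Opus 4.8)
The plan is to follow the scheme of Theorem~\ref{thm:triangles:upper} and Claim~\ref{claim:trans}. For every orientation $\vec G$ of $G = G(n,p)$ with no strongly connected $K_r$ I would produce a certifying set $S \subseteq E(\vec G)$ with $|S| \le 2n \cdot t_r(n,p)$, where $t_r(n,p) := C(\log n)^{1/(r-2)} p^{-(r-1)/2}$ for a large constant $C$, such that $\vec G$ is the unique orientation of $G$ extending $S$ with no strongly connected $K_r$. Distinct orientations then have distinct certifying sets, so
\[
  S_r(n,p) \;\le\; \sum_{i=0}^{2n\, t_r(n,p)} \binom{e(G)}{i} 2^i \;=\; \exp\!\Big(O\big(n\, t_r(n,p) \log n\big)\Big) \;=\; \exp\!\bigg( \frac{O\big(n (\log n)^{1+1/(r-2)}\big)}{p^{(r-1)/2}} \bigg),
\]
using $e(G) = O(n^2)$; this is the claimed bound. (If $p$ is so close to $n^{-2/(r+1)}$ that the trivial bound $2^{e(G)}$ already beats the right-hand side there is nothing to prove, so I assume otherwise; note also that $p \gg n^{-2/(r+1)}$ ensures $t_r(n,p) \le n$.)

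The new ingredient is the deterministic deduction step, which must replace the observation used in Claim~\ref{claim:trans} that every triangle inside a copy of $K_r$ is transitive --- no such local constraint survives when one merely forbids \emph{strongly connected} copies of $K_r$. Fix an ordering $v_1, \ldots, v_n$ of $V(G)$, and inductively take an inclusion-minimal $S_{k+1} \supseteq S_k$ certifying $\vec G$ on $\{v_1, \ldots, v_{k+1}\}$, given a certifying set $S_k$ for $\vec G$ on $\{v_1, \ldots, v_k\}$ with $|S_k| \le 2k\, t_r$. By minimality $S_{k+1} \setminus S_k$ consists of edges at $v_{k+1}$, as in Claim~\ref{claim:trans}; set $T^+ := \{w : (v_{k+1}, w) \in S_{k+1}\}$, so $\vec G$ has $v_{k+1} \to w$ for every $w \in T^+$. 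I claim $|T^+| \le t_r$. If not, the probabilistic lemma below gives a copy of $K_{r-1}$ in $G[T^+]$, on a vertex set $W$; then $\{v_{k+1}\} \cup W$ is a copy of $K_r$ in $G$. Choose $w^* \in W$ in a sink strong component of the tournament $\vec G[W]$, so that every vertex of $W$ reaches $w^*$ within $\vec G[W]$ --- such $w^*$ exists because the condensation of a tournament is a transitive tournament. Then re-orienting $v_{k+1} w^*$ to $w^* \to v_{k+1}$ makes $\vec G[\{v_{k+1}\} \cup W]$ strongly connected: every vertex reaches $v_{k+1}$ through $w^*$, and $v_{k+1}$ reaches every vertex via $v_{k+1} \to u \to \cdots \to w^*$ for any $u \in W \setminus \{w^*\}$. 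Since the orientation on $\{v_1, \ldots, v_k\} \supseteq W$ is already pinned down by $S_k$ (inductive hypothesis), and the edges $v_{k+1} \to w$ for $w \in W \setminus \{w^*\}$ lie in $S_{k+1} \setminus \{(v_{k+1}, w^*)\}$, the edge $v_{k+1} w^*$ is forced to point away from $v_{k+1}$ in every orientation extending $S_{k+1} \setminus \{(v_{k+1}, w^*)\}$ with no strongly connected $K_r$. Hence that smaller set still certifies $\vec G$, contradicting minimality. Applying the same argument to the reversal of $\vec G$ bounds $T^- := \{w : (w, v_{k+1}) \in S_{k+1}\}$ in the same way, so $|S_{k+1} \setminus S_k| \le 2t_r$, and the induction yields $|S| \le 2n\, t_r$.

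For the probabilistic step I would prove, via Janson's inequality exactly as in Lemma~\ref{lemma:useJanson}: if $n^{-2/(r+1)} \ll p \le (\log n)^{-2/(r-2)}$, then with high probability every $v \in V(G)$ and every $T \subseteq N(v)$ with $|T| \ge t_r(n,p)$ satisfy $K_{r-1} \subseteq G[T]$. For a fixed $v$ and a fixed $T$ of size $t_r$, the expected number of copies of $K_{r-1}$ in $G[T]$ is $\Theta\big(t_r^{r-1} p^{\binom{r-1}{2}}\big) = \Theta\big(C^{r-1}(\log n)^{(r-1)/(r-2)} p^{-(r-1)/2}\big)$, which is a factor $\Omega(C^{r-2})$ larger than $t_r \log n$ and so, for $C$ a large constant, is $\gg t_r \log n$; Janson then bounds the probability that no such copy appears by $\exp(-\Omega(t_r \log n))$, and since $\Pr[T \subseteq N(v)] = p^{t_r}$ a union bound over the at most $n \binom{n}{t_r}$ pairs $(v, T)$ closes. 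The one estimate needing care is that the clustering term $\Delta$ in Janson's inequality is $O$ of the above expectation; inspecting the overlaps of two $(r-1)$-cliques sharing $j \in \{2, \ldots, r-2\}$ vertices shows this holds precisely when $p \lesssim (\log n)^{-2/(r-2)}$, which is exactly the upper bound on $p$ in the theorem.

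I expect the deterministic deduction to be the main obstacle: recognising that a clique $K_{r-1}$ among the already-certified out-neighbours of the current vertex, together with a vertex in a sink strong component of its induced tournament, is precisely the gadget that converts ``no strongly connected $K_r$'' into a forced orientation of a single edge. Once that is identified, both the induction and the Janson calculation are routine adaptations of arguments already present in the paper.
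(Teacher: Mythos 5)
Your proposal is correct and follows essentially the same strategy as the paper: a greedy vertex-by-vertex certificate construction, bounded via a Janson-type lemma guaranteeing copies of $K_{r-1}$ inside large vertex sets, combined with the standard entropy count $\sum_i \binom{e(G)}{i} 2^i$. The paper's Claim~\ref{claim:strongly} uses the observation that every tournament on $r-1$ vertices has a Hamiltonian path $a \to \cdots \to b$, so that $b \to v_{k+1}$ together with the path and $v_{k+1} \to a$ would yield a Hamiltonian (hence strongly connected) $K_r$; your choice of a vertex $w^*$ in the sink strong component of $\vec G[W]$ is exactly the same gadget in different clothing --- the last vertex of a Hamiltonian path always lies in the sink component, and in both cases the point is that $w^*$ (resp.\ $b$) is reachable from every vertex of $W$ inside $\vec G[W]$. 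Your probabilistic lemma is stated slightly more weakly than the paper's Lemma~\ref{lemma:Kr:everywhere} (you restrict to $T \subseteq N(v)$; the paper proves the statement for all vertex sets of the given size and invokes it with $r-1$ in place of $r$), but your bound $t_r(n,p) = C(\log n)^{1/(r-2)}p^{-(r-1)/2}$ coincides with the paper's $s_{r-1}(n,p)$ in the range $p \le (\log n)^{-2/(r-2)}$, the Janson computation is the same, and the union bound closes in either formulation; the extra $p^{t_r}$ factor from conditioning on $T\subseteq N(v)$ is harmless but unnecessary. All in all, a faithful reconstruction of the paper's argument.
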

\begin{thm}\label{thm:strongly:upper2}
If $p > {(\log n)}^{-2/(r-2)}$, then
\begin{equation}\label{eq:strongly:upper:sharper}
S_r(n,p) \le \exp\bigg( \frac{O\big(n {(\log n)}^2 \big)}{p} \bigg)
\end{equation}
with high probability as $n \to \infty$.
\end{thm}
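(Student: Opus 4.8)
The plan is to reduce the statement to the triangle case already handled by Theorem~\ref{thm:triangles:upper}. Concretely, I would prove that if $p > (\log n)^{-2/(r-2)}$ then with high probability \emph{every} orientation of $G(n,p)$ containing no strongly connected $K_r$ is in fact $\circ_3$-free. Granting this, Theorem~\ref{thm:triangles:upper} immediately gives $S_r(n,p) \le D\big(G(n,p),\circ_3\big) \le \exp\big(6n(\log n)^2/p\big)$, which is the bound claimed.

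The deterministic ingredient is a small tournament fact: if $\{a,b,c\}$ is a cyclic triangle and $x_1,\dots,x_{r-3}$ are vertices such that $\{a,b,c,x_1,\dots,x_{r-3}\}$ induces a clique and each $x_i$ has both an in-neighbour and an out-neighbour inside $\{a,b,c\}$, then the resulting $K_r$ is strongly connected --- one adds the $x_i$ one at a time, using that attaching to a strongly connected tournament a vertex with both an in- and an out-neighbour in it preserves strong connectivity. Call a vertex $x$ \emph{mixed} with respect to a triangle $T$ if it has both an in- and an out-neighbour in $T$. Then to show that a $\circ_3$-containing orientation must contain a strongly connected $K_r$, it suffices to exhibit a cyclic triangle $T$ whose common neighbourhood, restricted to vertices mixed with respect to $T$, contains a copy of $K_{r-3}$.

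To produce such a configuration I would run the vertex-by-vertex strategy of Claim~\ref{claim:trans}: process $v_1,\dots,v_n$, maintain a minimal set $S$ of edges forcing the orientation seen so far among all orientations with no strongly connected $K_r$, and bound the number of edges added at each step. Write $T^+$ for the set of forced out-edges at the current vertex $v$. A ``flip one edge'' argument shows that if $G[T^+]$ contains an $(r-1)$-clique $X$, then flipping $vw$ for $w$ in the terminal strongly connected component of $\vec{G}[X]$ turns $\{v\}\cup X$ into a strongly connected $K_r$, contradicting minimality; so $\omega(G[T^+]) \le r-2$. This alone only yields $|T^+| = O\big((\log n/p)^{r-2}\big)$, which is not enough. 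The sharp bound needs the finer observation that, for every edge $uw$ of $G[T^+]$ (say with $u\to w$), flipping $vw$ turns the transitive triangle $uwv$ into the cyclic triangle $w\to v\to u\to w$ while changing nothing else, so if the common neighbourhood of $\{u,w,v\}$ restricted to its mixed vertices contains a $K_{r-3}$ we again obtain a strongly connected $K_r$ and contradict minimality. Since $\alpha(G(n,p)) = O(\log n/p)$ with high probability, if one can show that this last situation is forced whenever $|T^+| > C\log n/p$, then $|S| = O(n\log n/p)$ and $S_r(n,p)\le\sum_{i\le|S|}\binom{pn^2}{i}2^i\le\exp\big(O(n(\log n)^2/p)\big)$, as required.

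The crux --- and the step I expect to be the main obstacle --- is exactly this probabilistic input: one must show that with high probability $G(n,p)$ is such that, for every orientation with no strongly connected $K_r$ and every vertex $v$ with $|T^+|$ larger than $C\log n/p$, some edge $uw$ of $G[T^+]$ has a mixed common neighbourhood rich enough to span a $K_{r-3}$. The difficulty is that the set of mixed vertices is orientation-dependent and can be shrunk by the adversary --- unlike the independence number or the clique counts that governed the earlier bounds --- so the statement has to be recast as one about $G$ alone --- roughly, that for every $v$, every sufficiently large $A\subseteq N(v)$, every $u$, and every sufficiently large $B\subseteq A\cap N(u)$, the induced graph $G[B]$ contains many copies of $K_{r-3}$ --- which can then be controlled by a single union bound, after which one argues that the relevant orientation-dependent sets (the out-neighbourhood $N^+(v)$, the set of out-neighbours of $v$ that beat a given $u\in T^+$, and so on) cannot all be small at once. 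The hypothesis $p > (\log n)^{-2/(r-2)}$ enters precisely here: it is the threshold at which the expected number of copies of $K_{r-3}$ in the relevant induced subgraphs becomes super-polylogarithmic, so the concentration is strong enough to hold uniformly, and it is also what forces the adversary's mixed neighbourhoods to be too small to evade the argument. A little extra bookkeeping --- keeping only genuinely unavoidable edges in $S$ --- should then land the exponent at $O(n(\log n)^2/p)$ rather than a larger power of $\log n$.
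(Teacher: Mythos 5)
Your initial reduction plan --- showing that for this range of $p$ every orientation with no strongly connected $K_r$ is already $\circ_3$-free, then invoking Theorem~\ref{thm:triangles:upper} --- does not work: nothing prevents a $\circ_3$ from appearing, provided every $K_r$ containing it fails to be strongly connected. You recognise this, pivot to the vertex-by-vertex encoding of Claim~\ref{claim:strongly}, and your ``flip one edge'' observation (that $G[T^+]$ cannot contain a copy of $K_{r-1}$, since flipping the forced out-edge to the end of a Hamiltonian path, or to the terminal strongly connected component, of such a clique would create a strongly connected $K_r$, contradicting minimality) is precisely the paper's deterministic step. You then, however, under-sell this observation by estimating $|T^+|$ via Ramsey's theorem and $\alpha(G(n,p))$, obtaining $O\big((\log n/p)^{r-2}\big)$, and conclude that a much more elaborate orientation-dependent ``mixed common neighbourhood'' argument is needed --- which you correctly flag as the main obstacle and never carry out.

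The missing ingredient is Lemma~\ref{lemma:Kr:everywhere} applied with $r-1$ in place of $r$: a direct Janson-inequality calculation shows that when $p > (\log n)^{-2/(r-2)}$, with high probability \emph{every} set of $C\log n/p$ vertices of $G(n,p)$ already spans a copy of $K_{r-1}$. This is much stronger than the Ramsey bound you derived from the independence number; the local density of $G(n,p)$ forces bounded cliques in sets barely larger than a maximum independent set. Combined with your correct claim that $G[T^+]$ is $K_{r-1}$-free, this gives $|T^+| \le C\log n/p$ directly, and the proof closes exactly as you outline with no mixed-vertex analysis. (A side note: the threshold $(\log n)^{-2/(r-2)}$ is the value of $p$ at which $(\log n)/p$-subsets of $G(n,p)$ reliably span $K_{r-1}$, not $K_{r-3}$ as your last paragraph suggests.)
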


The proofs of Theorems~\ref{thm:strongly:upper}~and~\ref{thm:strongly:upper2} are
similar to the proofs of Theorems~\ref{thm:cliques:upper:top}
and~\ref{thm:cliques:upper:middle}. Instead of
Lemma~\ref{lemma:useJanson}, we will use the following straightforward
fact, which also follows easily from the Janson inequalities (see the
appendix).

\begin{lemma}\label{lemma:Kr:everywhere}
  For every $r \ge 3$, there exists $C > 0$ such that the following
  holds with high probability as $n \to \infty$. Set
\[ s_r(n, p) =  \left\{\begin{array}{cl}
\frac{C(\log n)^{1/(r-1)}}{p^{r/2}}&\text{if } p \leq (\log
n)^{-2/(r-1)}\\
\frac{C\log n}{p}&\text{if } p > (\log n)^{-2/(r-1)}.
  \end{array}\right.
\]
Then every set $S \subset V\big(G(n,p) \big)$ with $|S| \ge s_r(n,p)$ contains a copy of $K_r$.
\end{lemma}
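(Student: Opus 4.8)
The plan is to prove the lemma by a union bound over vertex sets, with Janson's inequality controlling each set; throughout, $r$ is fixed and all implied constants in $O(\cdot)$ and $\Theta(\cdot)$ may depend on it. Write $m := \lceil s_r(n,p)\rceil$. If $m > n$ the statement is vacuous, so assume $m \le n$; since $s_r(n,p) \ge C(\log n)/p \ge C\log n$, we have $m \to \infty$, and in particular $m \ge 2r$ for large $n$. Fix a set $S$ of exactly $m$ vertices and let $X_S$ denote the number of copies of $K_r$ inside $G(n,p)[S]$. The idea is to show $\Pr[X_S = 0] \le n^{-3m}$, and then to take a union bound over the $\binom{n}{m} \le n^m$ choices of $S$; since any set of size at least $s_r(n,p)$ contains a subset of size $m$, it then follows that with high probability every such set spans a copy of $K_r$.

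For the per-set estimate I would apply Janson's inequality (see, e.g.,~\cites{AlSp16,JaLuRu00}) to the increasing events ``$A$ spans a $K_r$'', $A \in \binom{S}{r}$. Here $\mu := \Ex[X_S] = \binom{m}{r}p^{\binom{r}{2}} = \Theta\big(m^r p^{\binom{r}{2}}\big)$, and the Janson correction term is $\Delta = \sum \Pr[A \wedge B]$ over ordered pairs of distinct $r$-subsets of $S$ whose $K_r$'s share an edge, i.e.\ with $|A \cap B| \ge 2$; grouping by $j := |A \cap B| \in \{2,\dots,r-1\}$ and noting that there are $\Theta(m^{2r-j})$ such pairs, each contributing $p^{2\binom{r}{2}-\binom{j}{2}}$, gives $\Delta = \Theta\big(\sum_{j=2}^{r-1} m^{2r-j}p^{2\binom{r}{2}-\binom{j}{2}}\big)$. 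Dividing, $\mu^2/\Delta = \Theta\big(\min_{2 \le j \le r-1} m^j p^{\binom{j}{2}}\big)$, and together with the ``$j = r$'' term $\mu$ this yields
\[
\min\!\big(\mu,\; \mu^2/\Delta\big) \;=\; \Theta\Big(\min_{2 \le j \le r}\, m^j p^{\binom{j}{2}}\Big),
\]
so that Janson's inequality bounds $\Pr[X_S = 0] \le \exp\!\big(-c_0 \min_{2\le j\le r} m^j p^{\binom{j}{2}}\big)$ for an absolute constant $c_0 > 0$ (e.g.\ $c_0 = 1/4$, using the extended form $\Pr[X_S=0]\le\exp(-\mu^2/2(\mu+\Delta))$).

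The key point is that $j \mapsto j\log m + \binom{j}{2}\log p$ is concave (its second derivative is $\log p < 0$), so its minimum over $[2,r]$ is attained at an endpoint; hence $\min_{2\le j\le r} m^j p^{\binom{j}{2}} = \min\big(m^2 p,\; m^r p^{\binom{r}{2}}\big)$. The two cases in the definition of $s_r(n,p)$ are calibrated precisely so that this minimum is at least $(3/c_0)\,m\log n$ once $C$ is large enough (depending only on $r$): from $m \ge s_r \ge C(\log n)/p$ one gets $m^2 p \ge C\, m\log n$, while from $m \ge s_r \ge C(\log n)^{1/(r-1)}/p^{r/2}$ together with $\binom{r}{2}/(r-1) = r/2$ one gets $m^r p^{\binom{r}{2}} \ge C^{r-1} m\log n$; and $p = (\log n)^{-2/(r-1)}$ is exactly the value at which $(\log n)/p$ and $(\log n)^{1/(r-1)}/p^{r/2}$ coincide, so that $s_r(n,p) = C\max\big((\log n)/p,\ (\log n)^{1/(r-1)}/p^{r/2}\big)$ in all cases. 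With $C$ so chosen, $\Pr[X_S = 0] \le n^{-3m}$, and the union bound gives $\binom{n}{m} n^{-3m} \le n^{-2m} = o(1)$, as required.

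This is a routine ``every large vertex set spans a clique'' estimate, and I do not expect a serious obstacle. The only mildly delicate steps are the bookkeeping in the estimate for $\Delta$ (counting the overlapping pairs of $r$-sets and the sizes of their edge overlaps, and converting $1/\sum_j x_j$ into $\min_j 1/x_j$), and the verification that the concavity in $j$ forces $j \in \{2,r\}$ to be the binding constraints — which is exactly what makes the two regimes of $s_r(n,p)$, and the threshold $(\log n)^{-2/(r-1)}$ separating them, come out as stated.
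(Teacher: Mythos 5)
Your proof is correct and takes essentially the same approach as the paper: a union bound over $m$-element vertex sets, with Janson's inequality controlling each set, and the same computation of $\mu$ and $\Delta$. The concavity observation (that $j \mapsto j\log m + \binom{j}{2}\log p$ is concave, so $\min_{2\le j\le r} m^j p^{\binom{j}{2}}$ is attained at an endpoint $j \in \{2,r\}$) is a slightly cleaner way to organize the final estimate than the paper's explicit two-case analysis depending on whether $p \le (\log n)^{-2/(r-1)}$, but the underlying calculations are the same.
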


We can now easily deduce Theorem~\ref{thm:strongly:upper}, using the method of the previous two sections.

\begin{proof}[Proof of Theorems~\ref{thm:strongly:upper}~and~\ref{thm:strongly:upper2}]
Once again, we begin with a deterministic claim (cf. Lemma~\ref{lemma:triangles:upper} and Claim~\ref{claim:trans}). Given $n \in \N$ and $p \in (0,1)$, let $G$ be a graph on $n$ vertices that satisfies the conclusion of Lemma~\ref{lemma:Kr:everywhere} for $r - 1$, that is, every set of vertices of $G$ of size at least $s_{r-1}(n,p)$ contains a copy of $K_{r-1}$.

\begin{claim}\label{claim:strongly}
Let $\vec{G}$ be an orientation of $G$ in which no copy of $K_r$ is strongly connected. There exists a set $S \subset E(\vec{G})$ with
\[|S| \le 2 n \cdot s_{r-1}(n,p)\]
such that $\vec{G}$ is the unique orientation of $G$ containing $S$ with no strongly connected $K_r$.
\end{claim}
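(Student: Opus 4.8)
The plan is to mimic the proof of Lemma~\ref{lemma:triangles:upper} (and Claim~\ref{claim:trans}) almost verbatim, using $s_{r-1}(n,p)$ in place of $\alpha(G)$ or $t_r(n,p)$, and exploiting the key structural fact: \emph{in a tournament on a vertex set $W$ with $|W|=r$, if the tournament is not strongly connected then it has a ``dominant'' vertex $u$ (one from which every other vertex is reachable, equivalently a vertex of out-degree $\ge 1$ into every strongly connected component other than its own) — more usefully, a non-strongly-connected tournament on $r$ vertices has a vertex $u$ such that either $u$ beats all of $W\setminus\{u\}$ or is beaten by all of $W\setminus\{u\}$.} Actually the cleanest statement: a tournament fails to be strongly connected iff the vertex set can be split as $A\dcup B$ with all edges going from $A$ to $B$; hence if a copy of $K_r$ on $W$ is not strongly connected and $w_1,\dots,w_r$ is \emph{any} enumeration, then for the last vertex $v$ added, either $v\in B$ for the corresponding partition (so $v$ is a ``sink side'' vertex and the edge orientations among $W\setminus\{v\}$ together with knowing that $v$ lies on the sink side force... ) — this is where I need to be slightly careful.

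First I would set up the inductive skeleton exactly as in Claim~\ref{claim:trans}: fix an enumeration $v_1,\dots,v_n$ of $V(G)$, and maintain a set $S_k\subset E(\vec G[\{v_1,\dots,v_k\}])$ of size at most $2k\cdot s_{r-1}(n,p)$ that uniquely determines $\vec G[\{v_1,\dots,v_k\}]$ among all orientations of $G[\{v_1,\dots,v_k\}]$ with no strongly connected $K_r$. Passing from $k$ to $k+1$, let $S_{k+1}\supseteq S_k$ be minimal so that $\vec G[\{v_1,\dots,v_{k+1}\}]$ is the unique such orientation extending $S_{k+1}$; I must bound $|S_{k+1}\setminus S_k|$, i.e.\ the number of newly-oriented edges at $v_{k+1}$, by $2 s_{r-1}(n,p)$. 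Write $N^+:=\{w:(v_{k+1},w)\in S_{k+1}\setminus S_k\}$ and $N^-:=\{w:(w,v_{k+1})\in S_{k+1}\setminus S_k\}$; by symmetry it suffices to show $|N^+|\le s_{r-1}(n,p)$. Suppose not; then $N^+$ is a set of $\ge s_{r-1}(n,p)$ vertices of $G$, so by the defining property of $G$ (conclusion of Lemma~\ref{lemma:Kr:everywhere} for $r-1$) it contains a copy of $K_{r-1}$, on a vertex set $U\subset N^+$ with $|U|=r-1$; together with $v_{k+1}$ this gives a $K_r$ on $U\cup\{v_{k+1}\}$ in $G$, in which $v_{k+1}$ beats every vertex of $U$.

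The crux is then: because $v_{k+1}$ dominates all of $U$ in $\vec G$, the copy of $K_r$ on $U\cup\{v_{k+1}\}$ can only fail to be strongly connected via its restriction to $U$; more precisely, the orientation of the edges inside $U$ is already determined by $S_k$ (all of $U$ lies in $\{v_1,\dots,v_k\}$ and $\vec G[\{v_1,\dots,v_k\}]$ is the unique extension of $S_k$ avoiding strongly connected $K_r$\,—\,wait, uniqueness was only for avoiding $K_r$, and $K_{r-1}\subset K_r$ on $\le k$ vertices need not pin down $U$). To avoid this subtlety I would instead argue as in Claim~\ref{claim:trans}: fix \emph{one} edge $(v_{k+1},w)\in S_{k+1}\setminus S_k$ with $w\in U$ and show it is redundant. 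Since $v_{k+1}$ beats all of $U\setminus\{w\}$ (these edges are in $S_{k+1}$) and the tournament on $U$ is some fixed orientation, consider any orientation $\vec G'$ of $G[\{v_1,\dots,v_{k+1}\}]$ extending $(S_{k+1}\setminus\{(v_{k+1},w)\})$ with no strongly connected $K_r$: if it oriented $wv_{k+1}$ as $(w,v_{k+1})$, then on $U\cup\{v_{k+1}\}$, $v_{k+1}$ dominates $U\setminus\{w\}$, is beaten by $w$, and $w$ (inside the $K_{r-1}$ on $U$) has an out-neighbour in $U$\,—\,one checks this forces a strongly connected $K_r$ (every vertex reaches $v_{k+1}$: via $w$; and $v_{k+1}$ reaches every vertex of $U\setminus\{w\}$ directly and reaches $w$ through $U$, using that $K_{r-1}$ is strongly connected for... no, $K_{r-1}$ need not be strongly connected). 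Here is the genuine obstacle, and I expect it to be the main one: I cannot assume the copy of $K_{r-1}$ inside $U$ is strongly connected, so ``$v_{k+1}$ reaches $w$ through $U$'' can fail. The fix is to take, inside $N^+$, not merely a $K_{r-1}$ but a $K_{r-1}$ together with enough structure — or, better, to observe that it is harmless to strengthen Lemma~\ref{lemma:Kr:everywhere} so that large sets contain a $K_{r-1}$ which is, say, a subset of a $K_r$ in $G$ hitting... In practice the paper's intended argument is surely: $v_{k+1}$ dominates $U$, $w\in U$ has some in-neighbour $w'\in U$ inside the $K_{r-1}$ (as $|U|\ge 2$ and any tournament on $\ge 2$ vertices has an edge), and if $wv_{k+1}$ were oriented toward $v_{k+1}$ then $\{w',w,v_{k+1}\}$ plus the rest... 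Rather than gamble, I would state the needed combinatorial lemma cleanly (``a $K_r$ with a dominating vertex $v$ is strongly connected iff $K_r-v$ is strongly connected''), apply Lemma~\ref{lemma:Kr:everywhere} for parameter $r-1$ \emph{replacing the notion ``contains $K_{r-1}$'' by ``contains a strongly connected $K_{r-1}$''}\,—\,but a $K_{r-1}$ in $G$ is a clique, and \emph{in $\vec G$} it has some orientation that may not be strongly connected, so this route also needs care. Given the excerpt explicitly tells us the proof is ``similar'', I will write it in the Claim~\ref{claim:trans} style: extract from $N^+$ a $K_{r-1}$ on $U$; use that on $U\cup\{v_{k+1}\}$ the vertex $v_{k+1}$ is a source, so strong connectivity of this $K_r$ is equivalent to a cyclic structure returning to $v_{k+1}$, which in turn forces that at most one edge from $v_{k+1}$ into $U$ can be ``redundant-free''; picking any second such edge $(v_{k+1},w)$ and showing its orientation is deducible from $S_{k+1}\setminus\{(v_{k+1},w)\}$ (because the alternative orientation, combined with the forced orientation on $U$ and on $v_{k+1}\to U\setminus\{w\}$, creates a strongly connected $K_r$), contradicting minimality\,—\,hence $|N^+|\le s_{r-1}(n,p)$, and similarly $|N^-|\le s_{r-1}(n,p)$. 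Summing over $k$ gives $|S|=|S_n|\le 2n\, s_{r-1}(n,p)$, completing the claim; the probabilistic deduction of Theorems~\ref{thm:strongly:upper} and~\ref{thm:strongly:upper2} is then identical to the previous sections, bounding $S_r(n,p)\le\sum_{i\le 2n s_{r-1}(n,p)}\binom{pn^2}{i}2^i$. The one step I genuinely expect to have to think hardest about is the deterministic claim that a source vertex $v$ together with a fixed tournament on the other $r-1$ vertices admits at most one ``free'' choice of an edge at $v$\,—\,i.e.\ pinning down $r-2$ of the $r-1$ edges at $v$ forces the last\,—\,without knowing the sub-tournament on $U$ is strongly connected; resolving this cleanly (likely by choosing $w$ to be a vertex of $U$ that is \emph{not} a ``global source'' of the sub-tournament on $U$, which exists since $|U|=r-1\ge 3$ and a tournament has at most one such vertex, actually at most one vertex beating everyone) is where the real work lies.
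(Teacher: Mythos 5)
Your inductive framework is exactly the paper's (and is correct as far as it goes), but you have correctly identified a real gap that you never close, and your proposed fix does not work. The missing ingredient is R\'edei's theorem: \emph{every} tournament contains a Hamiltonian (directed) path. Having extracted a clique on $U\subset T^+$ with $|U|=r-1$, the orientation of $\vec G[U]$ is already fixed by $S_k$ (all of $U$ lies in $\{v_1,\dots,v_k\}$), so it has a Hamiltonian path $a\to\cdots\to b$. Take $w:=b$. Any orientation $\vec G'$ of $G[\{v_1,\dots,v_{k+1}\}]$ extending $S_{k+1}\setminus\{(v_{k+1},b)\}$ agrees with $\vec G$ on $U$ and has $v_{k+1}\to u$ for all $u\in U\setminus\{b\}$; if it also had $b\to v_{k+1}$, then $v_{k+1}\to a\to\cdots\to b\to v_{k+1}$ would be a Hamiltonian cycle in the $K_r$ on $U\cup\{v_{k+1}\}$, which forces that $K_r$ to be strongly connected. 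Hence $(v_{k+1},b)$ is deducible, contradicting the minimality of $S_{k+1}$. This is precisely the paper's argument.

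Your alternative suggestion — choose $w\in U$ that is not a ``global source'' of $\vec G[U]$ — is not sufficient. For example, with $r=4$ and $U=\{x,y,z\}$ transitively oriented $x\to y\to z$, $x\to z$, take $w=y$: then $v_{k+1}\to x$, $v_{k+1}\to z$, $y\to v_{k+1}$, and the $K_4$ on $\{v_{k+1},x,y,z\}$ is \emph{not} strongly connected (the sink $z$ reaches nothing), so the orientation of $yv_{k+1}$ is not forced. The endpoint of a Hamiltonian path (here $z$) is what is needed, and this choice always exists by R\'edei. Everything else in your write-up (the bound $|S_{k+1}\setminus S_k|\le 2\,s_{r-1}(n,p)$, the symmetry between $T^+$ and $T^-$, the summation over $k$, and the final probabilistic deduction via $\sum_{i\le 2n\,s_{r-1}(n,p)}\binom{pn^2}{i}2^i$) matches the paper once this lemma is inserted.
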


\begin{proof}[Proof of Claim~\ref{claim:strongly}]
We will use the simple observation that every orientation of $K_{r-1}$ contains
a Hamiltonian path. Now, we can simply choose the set $S$ greedily, vertex by
vertex, as before. To be precise, fix an ordering $v_1, \ldots, v_n$ of $V(G)$, let $k \in [n]$, and suppose that we have already found a set $S_k \subset E\big(\vec{G}\big[\{v_1,\ldots,v_k\} \big] \big)$ such that
\[|S_k| \le 2k \cdot s_{r-1}(n,p),\]
and $\vec{G}\big[ \{v_1,\ldots,v_k\} \big]$ is the unique orientation of $G\big[ \{v_1,\ldots,v_k\} \big]$ containing $S_k$ in which no copy of $K_r$ is strongly connected.

Now, let $S_{k+1} \subset E\big(\vec{G}\big[\{v_1,\ldots,v_{k+1}\} \big] \big)$ be minimal such that $S_{k+1} \supset S_k$, and such that $\vec{G}\big[ \{v_1,\ldots,v_{k+1}\} \big]$ is the unique orientation of $G\big[ \{v_1,\ldots,v_{k+1}\} \big]$ containing $S_{k+1}$ in which no copy of $K_r$ is strongly connected. Setting $T^+ := \{w: (v_{k+1},w) \in S_{k+1}\}$, we claim that
\begin{equation}\label{eq:Tplus:bound:trnp.2}
|T^+| \le s_{r-1}(n,p).
\end{equation}
Indeed, if $T^+$ were larger than this, then there would exist a copy of
$K_{r-1}$ in $G[T^+]$, and this copy of $K_{r-1}$ contains a Hamiltonian path in
$\vec{G}$, from $a$ to $b$, say. Moreover, the orientations of the edges in this
path are determined by $S_k$, and we can therefore deduce the orientation of the
edge $bv_{k+1}$ from $S_{k+1} \setminus \{(v_{k+1},b)\}$. This contradicts the
minimality of $S_{k+1}$, and hence proves~\eqref{eq:Tplus:bound:trnp.2}.
Defining $T^- := \{w: (w,v_{k+1}) \in S_{k+1}\}$, a similar argument shows that
$|T^-| \le s_{r-1}(n,p)$. Taking $S = S_n$ finishes the proof.
\end{proof}

Suppose now that $n^{-2/(r+1)} \ll p \le {(\log n)}^{-2/(r-2)}$. It
follows from Claim~\ref{claim:strongly} (and the first case of Lemma~\ref{lemma:Kr:everywhere}) that, with high probability,
\[S_r(n,p) \le \sum_{i=0}^{2 n \, \cdot \, s_{r-1}(n,p)}
  \binom{pn^2}{i} 2^i \le \exp\left( \frac{O\big(n {(\log
      n)}^{1+1/(r-2)})}{p^{(r-1)/2}} \right),\] and
Theorem~\ref{thm:strongly:upper} is proved.  To prove
Theorem~\ref{thm:strongly:upper2}, note that
if $p > {(\log n)}^{-2/(r-2)}$, then a similar
calculation using Lemma~\ref{lemma:Kr:everywhere} shows that
\[S_r(n,p) \le \sum_{i=0}^{2 n \, \cdot \, s_{r-1}(n,p)}
  \binom{pn^2}{i} 2^i \le \exp\bigg( \frac{O\big(n {(\log
      n)}^2\big)}{p} \bigg)\]
with high probability, as claimed.
\end{proof}

\section{Avoiding longer cycles}\label{sec:upper:cycles}

In this section, we show an upper bound for the number of orientations avoiding oriented cycles of length $r$ (denoted by $\circ_r$). As stated in Conjecture~\ref{conj:cycles}, we believe substantially better upper bounds are possible for $r \geq 4$.

\begin{thm}\label{thm:cycles:upper}
Let $r \geq 3$. Then
\begin{equation*}
\log D\big(G(n,p), \circ_{r} \big) = \widetilde{O}\bigg( \frac{n}{p} \bigg)
\end{equation*}
with high probability as $n \to \infty$.
\end{thm}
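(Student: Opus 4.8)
The plan is to adapt the argument used for triangles (Theorem~\ref{thm:triangles:upper}) essentially verbatim, the only new ingredient being a replacement for Lemma~\ref{lemma:triangles:upper} that accounts for the forbidden cycle $\circ_r$ rather than $\circ_3$. The key observation is that a $\circ_3$ is precisely a non-transitive triangle, and the proof of Lemma~\ref{lemma:triangles:upper} only used the following feature: given a fixed $\circ_3$-free orientation $\vec{G}$, once the orientation of \emph{all but one} edge of a triangle is known (and is consistent with being $\circ_3$-free), the last edge is forced. For general $r$ we will want the analogous propagation statement: if $\vec{G}$ is $\circ_r$-free and we know the orientation of all but one edge of some copy of $C_r$ in $G$, is the last edge forced? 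This is \emph{not} true for a single copy of $C_r$ (both orientations of the last edge may avoid a directed $C_r$ on that vertex set), so the naive translation fails and we instead go through triangles again.

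Concretely, first I would reduce to the case $p \geq n^{-1/2}(\log n)^{c}$ for a suitable constant $c$, since for smaller $p$ the trivial bound $2^{e(G)} \leq \exp(O(pn^2)) = \exp(\widetilde O(n/p))$ already suffices (here we use $pn^2 \leq n^{3/2}\mathrm{polylog} \leq n/p \cdot \mathrm{polylog}$ when $p \leq n^{-1/2}\mathrm{polylog}$; one checks the exponents line up). In the remaining range, a short first-moment or Janson computation shows that with high probability \emph{every} edge of $G(n,p)$ lies in a triangle, and in fact every pair of adjacent edges that could be "completed" to a triangle through a common neighbour — more precisely, for the argument we want the statement that for every vertex $v$ and every set $T \subseteq N(v)$ of size at least $\widetilde O(1/p)$, there is a triangle through $v$ meeting $T$ in at least two vertices; this is exactly the $r=3$ instance already implicit in Lemma~\ref{lemma:useJanson} (or can be proved directly as in Proposition~\ref{lower:triangles}'s dual). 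The point is that a $\circ_r$-free orientation need not be $\circ_3$-free, so we cannot force triangle edges directly; instead we must keep track of which triangles are transitive.

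So the main step is a deterministic lemma in the spirit of Claim~\ref{claim:trans}: let $\vec{G}$ be a $\circ_r$-free orientation of a graph $G$ with the property that every large-enough neighbourhood subset spans a triangle with $v$ (as above). I claim there is a set $S \subseteq E(\vec{G})$ with $|S| \leq \widetilde O(n/p)$ determining $\vec{G}$ uniquely among $\circ_r$-free orientations. To build $S$ we again add vertices $v_1, v_2, \dots$ one at a time; when adding $v_{k+1}$ we take a minimal set of its incident edges whose orientations, together with what we already know, force the rest. The outgoing part $T^+$ of this set cannot contain two vertices $u, w$ forming a triangle with $v_{k+1}$ that is transitive in $\vec{G}$, by the same local-propagation argument as before — \emph{provided} we can argue that the orientation of $uw$ is already determined. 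That is the subtle point: unlike in the $\circ_3$-free case, transitivity of the triangle $uwv_{k+1}$ is not automatic. The fix is to observe that the construction should maintain, as an invariant, that enough triangles among $\{v_1,\dots,v_k\}$ have their orientation pinned down; alternatively, and more robustly, one weakens the target and only forces the "sign pattern" on triangles, paying an extra polylog. I expect the cleanest route is: run the argument so that $S$ forces the orientation of every edge lying in \emph{any} triangle, then note that in the relevant density range every edge lies in a triangle, so $\vec G$ is fully determined; the cost is still $\widetilde O(n/p)$ because each vertex contributes $\widetilde O(1/p)$ forced edges by the neighbourhood-triangle property. Summing the usual binomial bound $\sum_{i \le \widetilde O(n/p)} \binom{pn^2}{i} 2^i \leq \exp(\widetilde O(n/p))$ then gives the theorem.

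The hard part will be getting the propagation invariant exactly right so that the minimality argument for $|T^+| \leq \widetilde O(1/p)$ genuinely goes through in the $\circ_r$-free (not $\circ_3$-free) setting — i.e.\ ensuring that when $u,w \in T^+$ lie in a common triangle with $v_{k+1}$, the orientation of $uw$ really is determined by the previously chosen edges, so that knowing two of the three triangle edges forces the third and contradicts minimality. Once that bookkeeping is set up correctly, everything else (the high-probability structural facts about $G(n,p)$, the reduction for small $p$, and the final counting) is routine and parallels Sections~\ref{sec:upper:triangle}--\ref{sec:strongly}.
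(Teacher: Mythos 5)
Your plan correctly identifies the obstruction but does not overcome it, and the fix you sketch does not close the gap. The obstruction is real: a $\circ_r$-free orientation with $r \geq 4$ may orient every triangle cyclically, so knowing two edges of a triangle never forces the third, and the triangle-based propagation from Lemma~\ref{lemma:triangles:upper} and Claim~\ref{claim:trans} is simply unavailable. Your proposed repair --- ``run the argument so that $S$ forces the orientation of every edge lying in any triangle,'' or ``force the sign pattern on triangles'' --- is circular in the first formulation (to use a triangle $uwv$ as a forcing gadget you must already know whether it is transitive, which is exactly the information you are trying to avoid recording) and too expensive in the second (the number of triangles in $G(n,p)$ is $\Theta(p^3 n^3)$, which for $p$ bounded away from $0$ is $\Theta(n^3) \gg n/p$, so recording a bit per triangle blows the budget). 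The invariant ``enough triangles among $\{v_1,\dots,v_k\}$ are pinned down'' is never made precise, and I do not see how to make it work; you yourself flag it as ``the hard part,'' and it is.

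The paper's proof uses a completely different forcing mechanism, which makes no reference to triangles. The key observation (Lemma~\ref{lemma:cycles:path}) is that in a $\circ_r$-free orientation, if $P = (w_1,\dots,w_k)$ is a directed path inside $N(v)$ whose first $r-2$ vertices lie in $N^+(v)$, then the \emph{entire} path lies in $N^+(v)$ --- otherwise a minimal backward vertex closes a directed $r$-cycle through $v$. The Gallai--Milgram theorem then partitions $N^+(v)$ (and symmetrically $N^-(v)$) into at most $\alpha(G)$ directed paths, and it suffices to record the first $r-2$ out-edges (resp.\ last $r-2$ in-edges) of each path, giving $|S| \leq 2n(r-2)\alpha(G) = \widetilde O(n/p)$. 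This is the replacement for Lemma~\ref{lemma:triangles:upper} that your proposal was searching for; it is genuinely a different idea, and the triangle route does not appear to lead there. Your reduction to $p \gtrsim n^{-1/2}\mathrm{polylog}$ and the final binomial-sum estimate are fine, but they were never the bottleneck.
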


The proof of Theorem~\ref{thm:cycles:upper} follows directly from Lemma~\ref{lemma:cycles:upper} below,
 which is a generalisation of Lemma~\ref{lemma:triangles:upper} to
 longer cycles. To prove Lemma~\ref{lemma:cycles:upper}, we start with
 the following simple observation.

\begin{lemma}\label{lemma:cycles:path}
  Let $\vec{G}$ be a $\circ_r$-free graph, $v$ a vertex of $\vec{G}$ and
  $P = (w_1, \ldots, w_k)$ a directed path in $\vec{G}[N(v)]$ such
  that $w_i \in N^+(v)$ for $1 \leq i \leq r-2$. Then
  $P \subset N^+(v)$.
\end{lemma}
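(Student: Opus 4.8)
The plan is to prove the statement by induction on $k$, the number of vertices of the path $P$. When $k \le r-2$ there is nothing to prove, since the hypothesis already asserts $w_i \in N^+(v)$ for all $1 \le i \le r-2$. So assume $k \ge r-1$. First I would apply the induction hypothesis to the sub-path $P' = (w_1,\dots,w_{k-1})$, which again satisfies the hypotheses (it is still a directed path in $\vec G[N(v)]$, and $k-1 \ge r-2$), to conclude that $w_1,\dots,w_{k-1} \in N^+(v)$. It then only remains to show $w_k \in N^+(v)$.

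Suppose for contradiction that $w_k \notin N^+(v)$; since $w_k \in N(v)$ and $\vec G$ is an orientation, the edge between $v$ and $w_k$ must be oriented $w_k \to v$, i.e.\ $w_k \in N^-(v)$. Now consider the closed walk
\[
C \,=\, v \,\to\, w_{k-r+2} \,\to\, w_{k-r+3} \,\to\, \cdots \,\to\, w_{k-1} \,\to\, w_k \,\to\, v .
\]
The index $k-r+2$ lies in the range $[1,k-1]$ because $k \ge r-1$, so $w_{k-r+2} \in N^+(v)$ by the previous paragraph, giving the first edge $v \to w_{k-r+2}$; the middle edges $w_i \to w_{i+1}$ for $k-r+2 \le i \le k-1$ are present because $P$ is a directed path; and the closing edge $w_k \to v$ is exactly our assumption. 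The vertices $v, w_{k-r+2},\dots,w_k$ are pairwise distinct ($v \ne w_i$ as $w_i \in N(v)$, and the $w_i$ are distinct since $P$ is a path), and there are exactly $r$ of them, so $C$ is a directed cycle of length $r$, i.e.\ a copy of $\circ_r$ in $\vec G$. This contradicts the $\circ_r$-freeness of $\vec G$, so $w_k \in N^+(v)$, completing the induction.

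I do not expect a serious obstacle here: the argument is essentially a single observation. The only points requiring a little care are the index bookkeeping — checking that $k-r+2 \ge 1$ precisely when $k \ge r-1$, and that the walk $C$ visits exactly $r$ distinct vertices so that it is genuinely a $\circ_r$ rather than a shorter cycle or a non-simple closed walk — and making sure the induction is set up so that the base case $k \le r-2$ is covered directly by the hypothesis.
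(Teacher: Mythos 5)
Your proof is correct and essentially the same as the paper's: the paper argues directly by taking the minimal $i$ with $w_i\in N^-(v)$ and closing up the $r$-cycle $(v,w_{i-(r-2)},\ldots,w_i,v)$, which is exactly the cycle you construct in the inductive step (with $i=k$). The induction-on-$k$ wrapping versus the minimal-counterexample wrapping are cosmetically different presentations of the same one-line observation.
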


\begin{proof}
  If the conclusion were not true, there would exist a minimal $i$
  such that $w_i \in N^-(v)$. By hypothesis, we would have $i > r-2$. But
  then $(v, w_{i-(r-2)}, \ldots, w_i, v)$ would be a directed cycle of
  length $r$, a contradiction.
\end{proof}

By reversing the orientation of all edges of $G$, we can deduce from
Lemma~\ref{lemma:cycles:path} that if the last $r-2$ vertices
of a directed path contained in $N(v)$ are in $N^-(v)$, then the whole
path is in $N^-(v)$.

The main additional ingredient in the proof of Lemma~\ref{lemma:cycles:upper} is the Gallai--Milgram theorem~\cite{GaMi60}; the proof of Theorem~\ref{thm:cycles:upper} was inspired by a similar application in~\cite{ATT}.

\begin{thm}[Gallai--Milgram~\cite{GaMi60}]\label{thm:Gallai-Milgram}
The vertex set of every directed graph $\vec{G}$ can be partitioned into at most $\alpha(G)$ vertex-disjoint directed paths.
\end{thm}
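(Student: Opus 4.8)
The plan is to derive the theorem from the standard strengthening: \emph{every digraph $\vec{G}$ admits a path partition $\mathcal{P}$ together with an independent set $I \subseteq V(\vec{G})$ meeting each path of $\mathcal{P}$ in exactly one vertex.} This suffices, since then any minimum path partition of $\vec{G}$ uses at most $|\mathcal{P}| = |I| \leq \alpha(G)$ paths (indeed $\mathcal{P}$ itself already exhibits such a partition). So the whole problem reduces to the strengthened statement.

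To prove the strengthened statement I would take a path partition $\mathcal{P} = \{P_1, \dots, P_k\}$ of $\vec{G}$ that is minimum (fewest paths) and, among minimum path partitions, extremal for a suitable auxiliary quantity — for instance one maximising $\sum_i |V(P_i)|^2$, or one whose sorted vector of path-lengths is lexicographically largest. Let $t_i$ be the terminal (last) vertex of $P_i$; the claim is that $I := \{t_1, \dots, t_k\}$ is independent, which finishes the proof. Suppose not, so there is an arc $t_i \to t_j$ with $i \neq j$. First, $P_j$ cannot be a single vertex: otherwise appending $t_j$ to the end of $P_i$ merges two paths into one, contradicting minimality of $k$. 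Hence $P_j$ has a second-to-last vertex $s_j$, and we may \emph{reroute}: replace $P_i$ by $P_i$ followed by $t_j$, and replace $P_j$ by $P_j$ with $t_j$ deleted. This is again a path partition of $\vec{G}$ with $k$ paths, hence again minimum, but $P_i$ has grown and $P_j$ has shrunk; comparing the auxiliary quantity shows that the only way this fails to contradict extremality is $|V(P_j)| = |V(P_i)| + 1$. One then wants to iterate this ``rotation'' among the extremal minimum path partitions and reach a contradiction.

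The step I expect to be the main obstacle is precisely closing this last argument: a single reroute merely swaps the roles of two nearly equal-length paths and need not improve any obvious potential, so one must choose the auxiliary quantity (or a secondary tiebreak) carefully enough to force the rotation process to terminate with an independent terminal set. An essentially equivalent alternative, which localises the same difficulty, is to induct on $|V(\vec{G})|$: delete the terminal vertex $v$ of one path of a minimum path partition, apply the hypothesis to $\vec{G} - v$ to obtain a pair $(\mathcal{P}', I')$, and re-insert $v$; if $v$ is nonadjacent to $I'$ one simply adds $(v)$ as a new path, and otherwise one must reroute to absorb $v$ and repair the transversal, using that $v$ has few neighbours in $I'$. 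Once the exchange argument is pinned down, the remainder is routine bookkeeping.
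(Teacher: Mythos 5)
The paper does not prove Theorem~\ref{thm:Gallai-Milgram}; it cites it from the literature and uses it as a black box. So the only question is whether your argument is correct, and it is not: there is a genuine gap, and moreover your primary line of attack is based on a claim that is simply false.

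You reduce to the strengthened statement (path partition plus independent transversal meeting each path once), which is fine, and then claim that for a \emph{minimum} path partition $\mathcal{P}$ chosen extremal for some auxiliary quantity, the set of terminal vertices $\{t_1,\dots,t_k\}$ is independent. No choice of auxiliary quantity can make this work. Consider the digraph on $\{a,b,c,d\}$ with arcs $a\to b$, $c\to d$, $b\to d$. Its only two minimum path partitions are $\{a\to b\to d,\ (c)\}$, with terminal set $\{d,c\}$ and arc $c\to d$, and $\{a\to b,\ c\to d\}$, with terminal set $\{b,d\}$ and arc $b\to d$. Both terminal sets fail to be independent, so no tiebreak among minimum path partitions can rescue the claim, and the rotation process you describe has nothing to terminate at. (The strengthened statement is of course still true here: for $\{a\to b,\ c\to d\}$ the transversal $I=\{a,c\}$ works --- but it is not the terminal set.) Two things go wrong in your framing at once: the right extremality criterion is not minimum number of paths but \emph{inclusion-minimality of the terminal set}, and the independent transversal produced by the standard inductive proof is in general \emph{not} the set of terminals. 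Your second sketch (delete a terminal, induct on $|V|$, reinsert) is closer in spirit to the textbook proof, but as written it also has the gap: after applying the induction hypothesis to $\vec{G}-v$ you get some pair $(\mathcal{P}',I')$ with no control over how $v$ relates to $I'$ --- $v$ may well be adjacent to every vertex of $I'$, and the phrase ``using that $v$ has few neighbours in $I'$'' is not something you are entitled to. The fix is to prove by induction on $|V|$ the statement ``every path partition with inclusion-minimal terminal set admits an independent transversal,'' where the induction step deletes the terminal $t_1$ of one path and carefully compares terminal sets of path covers of $\vec G - t_1$ against $T(\mathcal{P})\setminus\{t_1\}$; the transversal then comes for free from the smaller digraph. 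You have correctly localised the difficulty, but the argument as proposed does not close it and the first version of it cannot be closed.
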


We are now ready to prove the main lemma of this section.

\begin{lemma}\label{lemma:cycles:upper}
  Let $\vec{G}$ be a $\circ_{r}$-free orientation of a graph $G$ on $n$ vertices. There exists a set $S \subset E(\vec{G})$ with
  \[ |S| \leq 2n \cdot (r-2)\alpha(G) \]
  such that $\vec{G}$ is the unique $\circ_{r}$-free orientation of
  $G$ containing $S$.
\end{lemma}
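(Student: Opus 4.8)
The plan is to imitate the proof of Lemma~\ref{lemma:triangles:upper}, arguing by induction on $n$ but replacing the one-line triangle argument by a combination of Lemma~\ref{lemma:cycles:path} and the Gallai--Milgram theorem (Theorem~\ref{thm:Gallai-Milgram}). The case $n=1$ is trivial. For $n\ge 2$, fix a vertex $v\in V(G)$, set $\vec{G'}:=\vec{G}-v$, and let $S'$ be the set supplied by the induction hypothesis applied to $\vec{G'}$, so that $|S'|\le 2(n-1)(r-2)\alpha(G)$ and $\vec{G'}$ is the unique $\circ_r$-free orientation of $G':=G-v$ containing $S'$. Exactly as in the triangle case, let $T\subseteq E(\vec{G})\setminus E(\vec{G'})$ (a set of edges all incident to $v$) be minimal such that $\vec{G}$ is the unique $\circ_r$-free orientation of $G$ containing $S'\cup T$; such a $T$ exists because $E(\vec{G})\setminus E(\vec{G'})$ has this property. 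Since $|S|:=|S'\cup T|\le 2(n-1)(r-2)\alpha(G)+|T|$, it suffices to prove that $|T|\le 2(r-2)\alpha(G)$.

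Write $T=T^+\cup T^-$, where $T^+$ (resp.\ $T^-$) consists of the edges of $T$ oriented away from (resp.\ towards) $v$, and set $W^+:=\{w:(v,w)\in T^+\}\subseteq N^+(v)$. The key claim is that every directed path in $\vec{G'}[W^+]$ has at most $r-2$ vertices. Granting this, Theorem~\ref{thm:Gallai-Milgram} partitions $W^+$ into at most $\alpha(\vec{G'}[W^+])\le\alpha(G)$ directed paths, each with at most $r-2$ vertices, so $|T^+|=|W^+|\le(r-2)\alpha(G)$. Applying the same argument to the reversal of $\vec{G}$ (using the reversed form of Lemma~\ref{lemma:cycles:path} noted after its proof) yields $|T^-|\le(r-2)\alpha(G)$, and hence $|T|\le 2(r-2)\alpha(G)$, as required.

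To prove the claim, suppose $Q=(w_1,\ldots,w_k)$ is a directed path in $\vec{G'}[W^+]$ with $k\ge r-1$, and put $T':=T\setminus\{(v,w_k)\}$ (note $(v,w_k)\in T^+$ since $w_k\in W^+$). I will show that $\vec{G}$ is still the unique $\circ_r$-free orientation of $G$ containing $S'\cup T'$, contradicting the minimality of $T$. Let $\vec{H}$ be any $\circ_r$-free orientation of $G$ with $S'\cup T'\subseteq E(\vec{H})$. Then $\vec{H}-v$ is a $\circ_r$-free orientation of $G'$ containing $S'$, so by the induction hypothesis $\vec{H}-v=\vec{G'}$; in particular $Q$ is a directed path in $\vec{H}[N(v)]$. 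Moreover, since all of $w_1,\ldots,w_k$ lie in $W^+$, the edges $(v,w_1),\ldots,(v,w_{k-1})$ all belong to $T'\subseteq E(\vec{H})$, so $w_1,\ldots,w_{k-1}\in N^+_{\vec{H}}(v)$; as $k-1\ge r-2$ this includes $w_1,\ldots,w_{r-2}$. Lemma~\ref{lemma:cycles:path}, applied inside $\vec{H}$, now forces $Q\subseteq N^+_{\vec{H}}(v)$, so in particular $(v,w_k)\in E(\vec{H})$ and therefore $S'\cup T\subseteq E(\vec{H})$, which gives $\vec{H}=\vec{G}$. This proves the claim, and with it the lemma.

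The only step that goes beyond bookkeeping is the pairing of Gallai--Milgram with Lemma~\ref{lemma:cycles:path}: a directed path inside $W^+$ cannot have $r-1$ or more vertices without forcing a copy of $\circ_r$ in every rival orientation (which is what makes the deleted edge $(v,w_k)$ redundant), while $W^+$ is covered by at most $\alpha(G)$ such short paths --- this is what produces the $(r-2)\alpha(G)$ bound in place of the $\alpha(G)$ bound of Lemma~\ref{lemma:triangles:upper}. The two things to be careful about are that Lemma~\ref{lemma:cycles:path} must be applied inside the competitor $\vec{H}$ rather than in $\vec{G}$, and that $(v,w_1),\ldots,(v,w_{k-1})$ genuinely survive the deletion, which holds precisely because all of $w_1,\ldots,w_k$ were chosen in $W^+$.
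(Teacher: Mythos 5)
Your proof is correct, and it rests on the same two ingredients as the paper's: the Gallai--Milgram theorem and Lemma~\ref{lemma:cycles:path}, combined with induction on $n$. The organization, however, is inverted. The paper \emph{constructs} the set $T$ at $v$ explicitly: it first applies Gallai--Milgram to decompose all of $N^+(v)$ and $N^-(v)$ into at most $\alpha(G)$ directed paths each, takes the first (resp.\ last) $r-2$ vertices of each path, and then invokes Lemma~\ref{lemma:cycles:path} to verify that this $T$ pins down the orientation of every remaining edge at $v$. You instead take a \emph{minimal} $T$ (closely mirroring the structure of the proof of Lemma~\ref{lemma:triangles:upper}), use Lemma~\ref{lemma:cycles:path} to show that $\vec{G'}[W^+]$ contains no directed path on $r-1$ vertices (else minimality would be violated by deleting $(v,w_k)$), and only then apply Gallai--Milgram --- to $W^+$ rather than to $N^+(v)$ --- to obtain $|T^+|\le(r-2)\alpha(G)$. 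Both routes yield the same bound; yours makes the analogy with the triangle case more transparent, while the paper's direct construction avoids the minimality bookkeeping and exhibits an explicit certificate set.
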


\begin{proof}
  We proceed by induction on $n$. Let $v \in V(G)$ be any vertex of
  $G$, and let $G' = G \setminus v$. By induction, there exists a set
  $S' \subset E(\vec{G}')$ of size $2(n-1) \cdot (r-2)\alpha(G)$ such that
  $\vec{G'} = \vec{G} \setminus v$ is the unique $\circ_{r}$-free
  orientation of $G'$ containing $S'$. Our aim is to find a set of
  edges $T$ of size $2(r-2)\alpha(G)$ such that $\vec{G}$ is the
  unique $\circ_{r}$-free orientation of $G$ containing $S' \cup T$.

  We start by applying Theorem~\ref{thm:Gallai-Milgram} to partition
  the graph $\vec{G}[N^+(v)]$ into a collection $\mathcal{P}^+$ of at most
  $\alpha(G)$ oriented paths, and define $T^+$ to be the set of edges
  given by
  \[ T^+ = \left\{(v, w) : w \text{ is one of the first } r-2 \text{
      vertices in some } P \in \mathcal{P}^+ \right\}. \] We define $T^-$
  similarly by decomposing $\vec{G}[N^-(v)]$ into at most $\alpha(G)$ oriented
  paths and taking the last $r-2$ vertices of each path. We claim that
  $T = T^+ \cup T^-$ has the desired property.

  To check the claim, we must show that any $\circ_r$-free orientation
  $\vec{H}$ of $G$ containing $S' \cup T$ equals $\vec{G}$. By the
  induction hypothesis, $\vec{G}[V(G')] = \vec{H}[V(G')]$, so it
  suffices to show that $N_G^+(v) \subset N_H^+(v)$ and
  $N_G^-(v) \subset N_H^-(v)$. Since $\vec{H}$ contains $T^+ \subset T$, we know
  $N_H^+(v)$ contains the first $r-2$ vertices in each path of
  $\mathcal{P}^+$. By Lemma~\ref{lemma:cycles:path}, $N_H^+(v)$
  contains all paths in $\mathcal{P}^+$, and since $\mathcal{P}^+$ was
  a partition of $N_G^+(v)$, we conclude that
  $N_G^+(v) \subset N_H^+(v)$. A similar argument works for $T^-$, and
  therefore we have checked the claim. Taking $S = S' \cup T$ finishes
  the proof.
\end{proof}

\begin{proof}[Proof of Theorem~\ref{thm:cycles:upper}]
  This proof closely mirrors that of
  Theorem~\ref{thm:triangles:upper}, with
  Lemma~\ref{lemma:cycles:upper} replacing
  Lemma~\ref{lemma:triangles:upper}. We obtain
  \[
D\big(G(n,p), \circ_{r} \big) \le \sum_{i=0}^{2n (r-2)\,\cdot\,
  \alpha( G(n,p) )} \binom{e\big(G(n,p)\big)}{i} 2^i  \le \exp\bigg(
\frac{6n(r-2){(\log n)}^2}{p} \bigg) \]
with high probability, as required.
\end{proof}

\section{Open problems}\label{sec:open}

In this section we will mention some further open problems and
possible directions for future research; in particular, we will
discuss the problem of removing the polylogarithmic factors that
separate the upper and lower bounds in
Theorems~\ref{thm:triangles},~\ref{thm:transitive}
and~\ref{thm:strongly}, and the problem of determining the behaviour
of $D\big(G(n,p), \vec H \big)$, the number of $\vec H$-free
orientations of~$G(n,p)$, for an arbitrary oriented graph $\vec H$.

First, we remark that if $\vec{H}$ is contained in a transitive tournament
then the situation is different; more precisely, the following theorem is an easy consequence of a well-known theorem of R\"odl and Ruci\'nski~\cite{RR95}.

\begin{thm}\label{thm:RR}
Let $r \geq 3$ and $p \gg n^{-2/(r+1)}$. With high probability, every orientation of $G(n,p)$ contains a transitive copy of $K_r$.
\end{thm}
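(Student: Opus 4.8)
The plan is to deduce Theorem~\ref{thm:RR} from the threshold version of the Ramsey-type statement due to Rödl and Ruciński~\cite{RR95}. Recall that a colouring-based formulation of their result states that if $p \gg n^{-1/m_2(K_r)}$, where $m_2(K_r) = \frac{\binom{r}{2} - 1}{r - 2} = \frac{r+1}{2}$, then with high probability $G(n,p)$ is \emph{Ramsey} for $K_r$ in the following strong sense: every $2$-colouring of $E(G(n,p))$ contains a monochromatic copy of $K_r$. Since $\frac{1}{m_2(K_r)} = \frac{2}{r+1}$, the hypothesis $p \gg n^{-2/(r+1)}$ is exactly the Rödl--Ruciński threshold, so we may assume that $G = G(n,p)$ has this Ramsey property.

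The key observation that turns this into a statement about orientations is the following: in any orientation $\vec G$ of a graph $G$, fix a linear order on $V(G)$ and colour each edge $\{u,v\}$ with $u < v$ by colour \textbf{red} if the edge is oriented forward (from $u$ to $v$) and \textbf{blue} if it is oriented backward (from $v$ to $u$). Apply the Ramsey property to this $2$-colouring: we obtain a copy of $K_r$ all of whose edges have the same colour. If the monochromatic $K_r$ is entirely red, then every edge within it goes from the smaller to the larger vertex (in the fixed order), so the induced orientation on this $K_r$ is precisely the transitive tournament given by the order. If it is entirely blue, the same reasoning shows the induced orientation is the transitive tournament given by the \emph{reverse} order. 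In either case $\vec G$ contains a transitive copy of $K_r$, which is what we wanted.

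So the proof is short: invoke~\cite{RR95} to get the Ramsey property with high probability, then argue deterministically that any orientation of a $K_r$-Ramsey graph contains a transitive $K_r$ via the forward/backward $2$-colouring. I do not anticipate a genuine obstacle here; the only point requiring a little care is citing the correct form of the Rödl--Ruciński theorem (the ``$1$-statement'' for $K_r$, with the exponent $1/m_2(K_r) = 2/(r+1)$ matching the hypothesis) and noting that monochromaticity in the edge-colouring corresponds exactly to transitivity-with-respect-to-the-fixed-order in the orientation. One should also remark that the hypothesis $r \ge 3$ ensures $m_2(K_r) > 1$ so that the threshold is meaningful, and that for $r = 3$ this recovers the familiar statement that dense-enough random graphs are Ramsey for triangles.
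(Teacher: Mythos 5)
Your argument is correct and is essentially the paper's own proof: both fix a linear order, encode the orientation as a $2$-colouring by forward/backward edges, invoke the Rödl--Ruciński $1$-statement at the threshold $n^{-1/m_2(K_r)} = n^{-2/(r+1)}$ to get a monochromatic $K_r$, and note that monochromatic corresponds to transitive. The only (immaterial) difference is the choice of which colour encodes forward edges.
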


To deduce Theorem~\ref{thm:RR} from the main theorem of~\cite{RR95}, simply fix a linear order of the vertices, and define an edge-colouring from an orientation by colouring forward-pointing edges blue and backwards edges red. In this setting, a monochromatic copy of $K_r$
corresponds to a transitively-oriented $K_r$ (but not vice-versa). If $p \gg n^{-2/(r+1)}$, then~\cite[Theorem~1]{RR95} ensures (with high probability) the existence of a monochromatic copy of $K_r$ in any $2$-colouring of $G(n,p)$, and therefore for this range of $p$ it is impossible to avoid a transitively-oriented copy of $K_r$.

We remark that Theorem~\ref{thm:RR} does \emph{not} give the correct threshold for the event ``every orientation of $G(n,p)$ contains a transitive triangle'', since every orientation of $K_4$ contains a transitive triangle, and the event $\{ K_4 \subset G(n,p) \}$ has a threshold at $\Theta(n^{-2/3})$. Nevertheless, we suspect that $n^{-2/(r+1)}$ is the correct threshold for the event ``every orientation of $G(n,p)$ contains a transitive copy of $K_r$'' for every $r \geq 4$.

Therefore, we will assume throughout this section that $\vec{H}$
contains a cycle. For this case, we state (somewhat imprecisely) the central question that is suggested by the work in this paper.

\begin{qu}\label{basic:qu}
Is the lower bound construction described in Section~\ref{sec:lower} always sharp?
\end{qu}

The results proved in this paper provide some evidence in favour of a
positive answer to this question (at least in a weak sense). It is
moreover plausible that it is true in a much stronger sense: that
(with high probability) almost all $\vec H$-free orientations of
$G(n,p)$ are ``close'' to one of the orientations given by the
construction described in Section~\ref{sec:lower}.\footnote{Of course, one can ask the same questions for a family $\vec{\cH}$ of forbidden oriented graphs, such as the family of non-transitive tournaments (or the family of strongly-connected tournaments) of a given size.}

\begin{prob}\label{prob:structure}
Determine the typical structure of an $\vec H$-free orientation of $G(n,p)$.
\end{prob}

For example, in the case $\vec H = \circ_{3}$ one might hope to prove that if $p \gg n^{-1/2}$, then the following holds with high probability: for almost all $\circ_{3}$-free orientations of $G(n,p)$, there exists an ordering of the vertices such that $\Theta(n/p)$ edges are oriented backwards, and all but $o(n/p)$ of those edges have length $O(1/p^2)$.

\subsection{Removing the polylogarithmic terms}

An important (and probably very challenging) step in the direction of Problem~\ref{prob:structure} would be to remove the polylogarithmic factor between our upper and lower bounds on $\log T_r(n,p)$ and $\log S_r(n,p)$.

\begin{prob}
Determine the typical values of $\log T_r(n,p)$ and $\log S_r(n,p)$ up to a constant factor for each $r \ge 3$ and every function $p \gg n^{-2/(r+1)}$.
\end{prob}

Note that some polylogarithmic factor is necessary, at least when $p$ is large, since
\[T_r(n,1) = S_r(n,1) = n!\]
for every $r \ge 3$. In the case $r = 3$, we conjecture that a
combination of the lower bounds given by
Propositions~\ref{lower:triangles}~and~\ref{lower:acyclic} is sharp up to the implicit constant factor in the exponent.

\begin{conj}\label{conj:triangles:sharp}
$D\big(G(n,p), \circ_{3} \big) \leq 2^{O(n/p)} \cdot n!$ for every $p \ge n^{-1/2}$.
\end{conj}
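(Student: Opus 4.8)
\textbf{Proof proposal for Conjecture~\ref{conj:triangles:sharp}.}

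The plan is to combine the deterministic skeleton of Lemma~\ref{lemma:triangles:upper} with a more careful accounting of the number of possible orientations, exploiting the fact that the $n!$ factor should come from choosing a linear order while the $2^{O(n/p)}$ factor comes from the ``free'' backward edges within that order. The key realisation is that Corollary~\ref{cor:upper} already gives $D\big(G(n,p),\circ_3\big) \le \sum_{i \le 2n\alpha(G)} \binom{e(G)}{i} 2^i$, and with $\alpha(G(n,p)) = \Theta\big((\log n)/p\big)$ this yields $\exp\big(O(n(\log n)^2/p)\big)$; to reach $2^{O(n/p)} n!$ one must save a factor of $(\log n)^2$ in the exponent. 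Since $n! = \exp\big(n\log n - n + O(\log n)\big) = \exp\big(\Theta(n\log n)\big)$, the target bound $2^{O(n/p)} n!$ is itself $\exp\big(O(n/p) + \Theta(n\log n)\big)$, and when $p \ge n^{-1/2}$ we have $n/p \ge n^{1/2} \gg \log n$, so $n\log n \ll n/p \cdot \log n$ but $n\log n$ is \emph{not} dominated by $n/p$ unless $p \ll 1/\log n$. Thus the conjecture is only nontrivial in the regime $n^{-1/2} \le p \lesssim 1/\log n$, and in that regime one has $n/p \gg n\log n$, so the target is simply $\exp\big(O(n/p)\big)$.

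First I would reduce to the range $n^{-1/2} \le p \le 1/\log n$, where the goal becomes $D\big(G(n,p),\circ_3\big) \le \exp(Cn/p)$ for some absolute constant $C$; in the complementary range $p > 1/\log n$ the bound $\exp\big(O(n(\log n)^2/p)\big) = \exp\big(O(n(\log n)^3)\big)$ from Theorem~\ref{thm:triangles:upper} already beats $n!$ only if we are more careful, so actually the full statement including $p$ up to $1$ needs the $n!$ term honestly. Second, I would refine Lemma~\ref{lemma:triangles:upper}: rather than bounding $|S|$ by $2n\alpha(G)$ and then counting all $\binom{e(G)}{|S|}2^{|S|}$ placements, I would count orientations by first fixing a ``reference'' acyclic orientation (equivalently a linear order, contributing the $n!$) and then observing that any $\circ_3$-free orientation is determined by the set of edges on which it \emph{disagrees} with the reference orientation, together with the data of which agreements are ``forced''. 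The crucial structural input should be that in a $\circ_3$-free orientation, once a linear order is fixed, the backward edges form a sparse structure --- specifically, as in Proposition~\ref{lower:triangles}, the backward edges of long length are essentially forbidden, so the number of genuinely free choices is $O(n/p)$ rather than $O(n(\log n)/p)$.

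The main obstacle, I expect, is making precise the claim that \emph{every} $\circ_3$-free orientation (not just those produced by the Section~\ref{sec:lower} construction) admits a linear order with respect to which all but $O(n/p)$ edges are forced. This is essentially asking for the upper-bound half of Problem~\ref{prob:structure}, which the authors flag as ``probably very challenging'': one would need a way to extract, from an arbitrary $\circ_3$-free orientation $\vec G$, an ordering of $V(G)$ such that the set of backward edges is small and geometrically concentrated, and then argue that the orientation of the forward edges is determined (up to $2^{O(n/p)}$ choices) by $\vec G$ being $\circ_3$-free. A natural attempt is to take a topological-type order adapted to $\vec G$ --- e.g.\ order vertices by a potential function derived from distances in the ``strongly connected collapse'' --- but controlling the length distribution of backward edges under such an order, against the randomness of $G(n,p)$, seems to require a genuinely new idea, perhaps a bootstrap-percolation-style argument of the kind the authors allude to after Lemma~\ref{lemma:triangles:upper}. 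Absent that, one could at least try to verify the conjecture for $p$ bounded away from both $n^{-1/2}$ and $1$, or to prove the weaker bound $2^{O(n/p)} \cdot (n!)^{1+o(1)}$.
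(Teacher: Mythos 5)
This statement is a \emph{conjecture} in the paper (Conjecture~\ref{conj:triangles:sharp}); the authors offer no proof, so there is nothing to compare your argument against. Your write-up is honest about this: you sketch a plausible strategy (fix a reference linear order, contributing the $n!$ factor, then argue that compatible $\circ_3$-free orientations are determined up to $2^{O(n/p)}$ choices by a small set of ``free'' backward edges) and then correctly locate the missing ingredient, namely a structural statement of the kind envisaged in Problem~\ref{prob:structure}: that \emph{every} $\circ_3$-free orientation of $G(n,p)$, not just those produced by the construction of Section~\ref{sec:lower}, admits an ordering with respect to which the backward edges are $O(n/p)$ in number and geometrically concentrated. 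Extracting such an ordering from an arbitrary $\circ_3$-free orientation and controlling backward-edge lengths against the randomness of $G(n,p)$ is exactly the open content of the conjecture; none of Lemma~\ref{lemma:triangles:upper}, Corollary~\ref{cor:upper} or Theorem~\ref{thm:triangles:upper} supplies it, and your proposal does not close this gap, as you yourself acknowledge.

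One inaccuracy is worth flagging. You assert that ``the conjecture is only nontrivial in the regime $n^{-1/2} \le p \lesssim 1/\log n$.'' That is not quite right: for $1/\log n \ll p \le 1$ the conjectured bound is $\exp\big(\Theta(n\log n)\big)$ (the $n!$ term dominates), whereas Theorem~\ref{thm:triangles:upper} gives only $\exp\big(O(n(\log n)^2/p)\big)$, which exceeds the target by a factor between $\log n$ (at $p = \Theta(1)$) and $(\log n)^2$ (near $p = \Theta(1/\log n)$) in the exponent, so the conjecture is a genuine strengthening across the entire range $p \ge n^{-1/2}$. You partially notice this in the following sentence, but the initial claim would mislead a reader. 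Also, $p \ge n^{-1/2}$ gives $n/p \le n^{3/2}$, not $n/p \ge n^{1/2}$; the inequality $n/p \gg \log n$ you want follows simply from $p \le 1$, which gives $n/p \ge n$.
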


\subsection{General forbidden structures}\label{sec:general:H}

In this section we will discuss the general lower bound on $D\big(G(n,p), \vec H \big)$ given by the construction described in Section~\ref{sec:lower}, where $\vec H$ is an arbitrary oriented graph that contains a cycle. Let $m_2(\vec H)$ denote the $2$-density of the underlying graph of $\vec H$,
\[m_2(\vec H) = \max\bigg\{ \frac{e(\vec F) - 1}{v(\vec F) - 2} \,:\, \vec F \subset \vec H, \, v(\vec F) \ge 3 \bigg\}.\]
Note that any oriented graph can be decomposed into strongly connected components in a unique way, and let $s(\vec H)$ denote the number of strongly connected components of $\vec H$.

\begin{prop}\label{prop:general:lower}
Suppose that $\vec H$ contains a cycle. If $p \gg n^{-1/m_2(\vec H)}$ and $\omega \gg 1$, then
\[\log D\big(G(n,p), \vec H \big) \ge \, \frac{pn}{\omega} \cdot \max\bigg\{ {\Big(p^{e(\vec F) - 1} n^{s(\vec F) - 1} \Big)}^{\frac{-1}{v(\vec F) - s(\vec F) - 1}} : \vec F \subset \vec H, \, v(\vec F) \ge s(\vec F) + 2 \bigg\}\]
with high probability as $n \to \infty$.
\end{prop}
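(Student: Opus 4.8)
The plan is to follow the template used throughout Section~\ref{sec:lower}: fix the linear order $[n]$, pick a critical length $a$, orient every $a$-long edge forward, and count the $a$-short edges that are guaranteed to be "safe". The new feature is that the optimal choice of $a$ now depends on which subgraph $\vec F \subset \vec H$ is the binding constraint, so I would first reduce to a single $\vec F$: since any copy of $\vec H$ in an orientation of $G(n,p)$ contains a copy of every $\vec F \subset \vec H$, it suffices to prove, for each fixed $\vec F$ with $v(\vec F) \ge s(\vec F)+2$, that there is an orientation scheme yielding at least $\frac{pn}{\omega}\big(p^{e(\vec F)-1} n^{s(\vec F)-1}\big)^{-1/(v(\vec F)-s(\vec F)-1)}$ free edges; taking the best such $\vec F$ gives the max in the statement. (The hypothesis $p \gg n^{-1/m_2(\vec H)}$ is exactly what makes the chosen $a$ below satisfy $a = o(n)$, for every relevant $\vec F$.)

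Fix such an $\vec F$ and set $a := \Theta\big(n^{s(\vec F)-1} p^{e(\vec F)-1}\big)^{-1/(v(\vec F)-s(\vec F)-1)} \cdot \omega^{-1}$, chosen so that $pan/\omega$ equals (up to constants) the target quantity. The key structural observation is the one already used in Propositions~\ref{lower:cycles} and~\ref{lower:strongly}: if all $a$-long edges point forward, then in any copy of $\vec H$ the backward edges all lie inside strongly connected components of $\vec H$ (an edge between two distinct components is consistent with the forward order), so each strongly connected component of a "dangerous" copy of $\vec F$ is confined to an interval of length $O(a)$, while the $s(\vec F)$ components may be spread arbitrarily across $[n]$. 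Hence the number of copies of $\vec F$ in $K_n$ that could possibly become a copy of $\vec F$ under some orientation of the $a$-short edges is $O\big(n^{s(\vec F)} a^{v(\vec F)-s(\vec F)}\big)$: choose the $s(\vec F)$ "leftmost-vertices-of-components" freely ($n^{s(\vec F)}$ ways) and the remaining $v(\vec F)-s(\vec F)$ vertices within distance $O(a)$ of their component anchor. Therefore the expected number of such copies present in $G(n,p)$ is
\[
O\big(n^{s(\vec F)} a^{v(\vec F)-s(\vec F)} p^{e(\vec F)}\big) = O\big(pan \cdot n^{s(\vec F)-1} a^{v(\vec F)-s(\vec F)-1} p^{e(\vec F)-1}\big) = O\big(pan/\omega^{\,v(\vec F)-s(\vec F)-1}\big) \ll pan
\]
by the choice of $a$. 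Meanwhile, by Chernoff's inequality, with high probability $G(n,p)$ has $(1+o(1))pan$ many $a$-short edges. By Markov's inequality, with high probability all but $o(pan)$ of the $a$-short edges lie in no dangerous copy of $\vec F$, hence in no copy of $\vec H$; these edges may be oriented arbitrarily, giving at least $\frac{pan}{2} \ge \frac{pn}{\omega}\big(p^{e(\vec F)-1}n^{s(\vec F)-1}\big)^{-1/(v(\vec F)-s(\vec F)-1)}$ free edges and the claimed lower bound on $\log D(G(n,p),\vec H)$.

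The main obstacle is making the counting step airtight: one must verify that "dangerous copy of $\vec F$ in $K_n$" is correctly characterised by the component-interval picture, i.e.\ that it is genuinely the exponents $s(\vec F)$ and $v(\vec F)-s(\vec F)$ (and not some finer invariant) that govern the count, and that the first moment of dangerous copies really is $O\big(n^{s(\vec F)}a^{v(\vec F)-s(\vec F)}p^{e(\vec F)}\big)$ uniformly — this is where one has to be slightly careful about overlapping or degenerate embeddings, although since we only need a first-moment (Markov) bound these contribute only lower-order terms. The verification that $a = o(n)$ for every $\vec F$ under the hypothesis $p \gg n^{-1/m_2(\vec H)}$ is a short computation: $a = o(n)$ is equivalent to $p^{e(\vec F)-1} n^{v(\vec F)-2} \gg \omega^{-(v(\vec F)-s(\vec F)-1)}$, which holds because $p \gg n^{-(v(\vec F)-2)/(e(\vec F)-1)} \ge n^{-1/m_2(\vec H)}$ for the worst $\vec F$. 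Everything else is a routine repetition of the arguments in Propositions~\ref{lower:triangles}–\ref{lower:strongly}.
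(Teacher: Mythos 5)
Your proof is correct and follows the same approach as the paper's: for each fixed $\vec F \subset \vec H$ with $v(\vec F) \ge s(\vec F)+2$, choose the critical length $a$ so that the expected number of potential copies of $\vec F$ (each strong component confined to an interval of length $O(a)$) is $o(pan)$, and then apply Markov's inequality and take the maximum over $\vec F$. Your spelled-out justification of the $O\big(n^{s(\vec F)} a^{v(\vec F)-s(\vec F)}\big)$ count via the component--interval picture is exactly the observation the paper states in one line, and the only quibble is the direction of the inequality in your $a=o(n)$ verification: with the standard (max) definition of the $2$-density one has $n^{-1/m_2(\vec H)} \ge n^{-(v(\vec F)-2)/(e(\vec F)-1)}$, which is what is needed to deduce $p \gg n^{-(v(\vec F)-2)/(e(\vec F)-1)}$ from the hypothesis (note that the paper writes $\min$ in its definition of $m_2$, which appears to be a typo).
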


\begin{proof}
When $p = \Omega(1)$, the claimed lower bound is of order $o(n)$, and therefore follows from Proposition~\ref{lower:acyclic}. When $p = o(1)$, we first need to observe that, since $\vec H$ contains a cycle, there is a strongly connected component $\vec F \subset \vec H$ with $v(\vec F) \ge 3$, and this implies that there exists $\vec F \subset \vec H$ with $v(\vec F) \ge s(\vec F) + 2$ and $p^{e(\vec F) - 1} n^{s(\vec F) - 1} = o(1)$. Given any such $\vec F$, consider the construction of Section~\ref{sec:lower} with
\[a = \frac{1}{\omega} \cdot {\Big(p^{e(\vec F) - 1} n^{s(\vec F) - 1} \Big)}^{\frac{-1}{v(\vec F) - s(\vec F) - 1}},\]
that is, orient all edges of length at least $a$ from left to right. Note that $a \gg 1$, by our choice of $\vec F$, and that $p \gg n^{-1/m_2(\vec H)}$ implies that $a = o(n)$. Now, observe that the expected number of potential copies of $\vec F$ in $G(n,p)$ is
\[O\Big(n^{s(\vec F)} a^{v(\vec F) - s(\vec F)} p^{e(\vec F)} \Big) \ll pan,\]
since any two vertices in the same strongly connected component of a copy of $\vec F$ must lie within distance $O(a)$ of one another, and since $v(\vec F) \ge s(\vec F) + 2$. By Markov's inequality, it follows that with high probability there exists a set of at least $pan/2$ edges that can be oriented freely without creating a copy of $\vec F$, and hence of $\vec H$. Since this holds for each $\vec F \subset \vec H$ with $v(\vec F) \ge s(\vec F) + 2$ and $p^{e(\vec F) - 1} n^{s(\vec F) - 1} = o(1)$, the claimed bound follows.
\end{proof}

We can now rephrase Question~\ref{basic:qu} more precisely in this setting.

\begin{qu}\label{qu:general:upper}
Suppose that $\vec H$ contains a cycle, and let $p \gg n^{-1/m_2(\vec H)}$. Is it true that
\[\log D\big(G(n,p), \vec H \big) = \widetilde{\Theta}\bigg( \max_{\vec F \subset \vec H, \, v(\vec F) \ge s(\vec F) + 2} \bigg\{ pn \cdot {\Big(p^{e(\vec F) - 1} n^{s(\vec F) - 1} \Big)}^{\frac{-1}{v(\vec F) - s(\vec F) - 1}} \bigg\} \bigg)\]
with high probability as $n \to \infty$?
\end{qu}

Observe that the oriented subgraph $\vec F \subset \vec H$ which corresponds to the maximum in Proposition~\ref{prop:general:lower} depends on $p$, and in general it can change arbitrarily many times as $p$ increases. A positive answer to Question~\ref{qu:general:upper} would therefore imply the existence of $\vec H$ for which $D\big(G(n,p), \vec H \big)$ exhibits arbitrarily many thresholds between $n^{-1/m_2(\vec H)}$ and $1$.

\section*{Acknowledgements}

The authors would like to thank the referees for their careful reading and
valuable comments.

\appendix
\renewcommand\thesection{A}

\section*{Appendix}
\numberwithin{equation}{section}
\setcounter{thm}{0}

In this appendix, we will prove Propositions~\ref{lower:transitive}
and~\ref{lower:acyclic} and Lemmas~\ref{lemma:useJanson}
and~\ref{lemma:Kr:everywhere}. We start out by recalling Janson's
inequality, which will be used in two of the proofs (see, e.g., \cites{AlSp16,JaLuRu00}).

  \begin{lemma}[Janson's inequality]\label{lemma:janson}
  Suppose that ${\{B_i\}}_{i\in I}$ is a family of subsets of a finite set $X$ and let $0\leq p\leq 1$.
  Let
  \begin{equation}\label{eq:defn_mu_delta}
   \mu = \sum_{i\in I}p^{|B_i|} \qquad \text{and} \qquad \Delta=\sum_{i\sim j} p^{|B_i\cup B_j|},
  \end{equation}
  where the sum is over ordered pairs and $i\sim j$ means that $i\neq j$ and $B_i\cap B_j\neq \emptyset$.
  Then,
  \begin{equation*}
  \mathbb{P}(B_i \not\subset X_p\text{ for all }i\in I)\leq
  \exp\left(- \min\left\{\frac{\mu}{2}, \frac{\mu^2}{2\Delta}\right\}\right).
\end{equation*}
\end{lemma}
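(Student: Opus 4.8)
The plan is to prove the lemma in the standard two-step fashion: first establish the ``basic'' Janson bound
\[
\Pr\Big(\bigcap_{i\in I}\overline{A_i}\Big)\le e^{-\mu+\Delta/2},
\]
where $A_i$ denotes the increasing event $\{B_i\subseteq X_p\}$, and then boost it to the stated inequality by applying the basic bound to a random sub-family of ${\{B_i\}}_{i\in I}$ and optimising a parameter. Throughout I would use the elementary identities $\Pr(A_i)=p^{|B_i|}$ and $\Pr(A_i\cap A_j)=p^{|B_i\cup B_j|}$, so that $\mu=\sum_{i}\Pr(A_i)$ and $\Delta=\sum_{i\sim j}\Pr(A_i\cap A_j)$.

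For the basic bound I would fix an arbitrary ordering $I=\{1,\dots,m\}$ and write
\[
\Pr\Big(\bigcap_{i=1}^{m}\overline{A_i}\Big)=\prod_{i=1}^{m}\Pr\Big(\overline{A_i}\,\Big|\,\bigcap_{j<i}\overline{A_j}\Big).
\]
For each $i$ I would split $\{1,\dots,i-1\}$ into the ``near'' indices $D=\{j<i:j\sim i\}$ and the ``far'' indices $B=\{1,\dots,i-1\}\setminus D$. Since $A_i$ depends only on the coordinates in $B_i$, which are disjoint from $B_j$ for every $j\in B$, the event $A_i$ is independent of $E_B:=\bigcap_{j\in B}\overline{A_j}$; and since $A_i\cap A_j$ is increasing while $E_B$ is decreasing, Harris's inequality gives $\Pr(A_i\cap A_j\mid E_B)\le\Pr(A_i\cap A_j)$ for each $j\in D$. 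A short manipulation of conditional probabilities then yields $\Pr\big(A_i\mid\bigcap_{j<i}\overline{A_j}\big)\ge\Pr(A_i)-\sum_{j\in D}\Pr(A_i\cap A_j)$, and hence
\[
\Pr\Big(\overline{A_i}\,\Big|\,\bigcap_{j<i}\overline{A_j}\Big)\le 1-\Pr(A_i)+\sum_{j\in D}\Pr(A_i\cap A_j)\le\exp\Big(-\Pr(A_i)+\sum_{j\in D}\Pr(A_i\cap A_j)\Big).
\]
Multiplying over $i$ and observing that each unordered dependent pair is counted exactly once then gives $\Pr\big(\bigcap_{i}\overline{A_i}\big)\le e^{-\mu+\Delta/2}$.

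To obtain the stated bound I would fix $q\in[0,1]$ and let $S\subseteq I$ retain each index independently with probability $q$. For every outcome of $S$ we have $\bigcap_{i\in I}\overline{A_i}\subseteq\bigcap_{i\in S}\overline{A_i}$, so applying the basic bound to the sub-family indexed by $S$ gives $-\ln\Pr\big(\bigcap_{i\in I}\overline{A_i}\big)\ge\mu_S-\Delta_S/2$ pointwise in $S$, where $\mu_S$ and $\Delta_S$ are the restrictions of $\mu$ and $\Delta$ to $S$. Taking expectations over $S$ (the left-hand side being constant) and using $\Ex[\mu_S]=q\mu$ and $\Ex[\Delta_S]=q^2\Delta$ yields $-\ln\Pr\big(\bigcap_{i\in I}\overline{A_i}\big)\ge q\mu-q^2\Delta/2$. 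Optimising over $q$ --- taking $q=1$ when $\mu\ge\Delta$, which gives $\mu-\Delta/2\ge\mu/2$, and $q=\mu/\Delta$ when $\mu<\Delta$, which gives $\mu^2/(2\Delta)$ --- shows that $-\ln\Pr\big(\bigcap_i\overline{A_i}\big)\ge\min\{\mu/2,\mu^2/(2\Delta)\}$ in every case, which is exactly the asserted inequality.

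The step I expect to need the most care is the correlation argument in the second paragraph: one must set up the conditioning so that the Harris/FKG inequality is applied to a genuinely increasing event ($A_i\cap A_j$) against a genuinely decreasing event ($E_B$), and so that the ``far'' events $\{\overline{A_j}:j\in B\}$ genuinely decouple from $A_i$ by coordinate-disjointness. Everything else --- the telescoping product, the estimate $1+t\le e^t$, linearity of expectation for $\mu_S$ and $\Delta_S$, and the one-variable optimisation --- is routine bookkeeping.
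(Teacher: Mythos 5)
The paper does not prove this lemma: it simply \emph{recalls} Janson's inequality, with citations to Alon--Spencer and Janson--\L uczak--Ruci\'nski, and then uses it as a black box in the proofs of Lemmas~\ref{lemma:useJanson} and~\ref{lemma:Kr:everywhere}. So there is no ``paper's own proof'' to compare against. Your argument is a correct, self-contained proof of the cited result, and it is essentially the standard textbook derivation of the extended (or ``generalized'') Janson inequality: first the basic bound $\Pr\bigl(\bigcap_i \overline{A_i}\bigr)\le e^{-\mu+\Delta/2}$ via the telescoping product of conditional probabilities, the split into coordinate-disjoint ``far'' indices and dependent ``near'' indices, independence for the far part, and Harris/FKG for the near part; then the boosting step via independent $q$-thinning of the index set, linearity of expectation giving $q\mu - q^2\Delta/2$, and the one-variable optimisation ($q=1$ versus $q=\mu/\Delta$) producing the $\min\{\mu/2,\,\mu^2/2\Delta\}$ in the exponent. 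The details check out: $A_i$ is genuinely independent of $E_B$ because the coordinate sets are disjoint; $A_i\cap A_j$ is increasing and $E_B$ decreasing so Harris applies; the sum over $j\in D$ across all $i$ hits each unordered dependent pair exactly once, matching the paper's convention that $\Delta$ is an ordered sum so that the total is $\Delta/2$; and $\Ex[\mu_S]=q\mu$, $\Ex[\Delta_S]=q^2\Delta$ are immediate since membership of $i$ and $j$ in $S$ is independent. This matches the argument in the cited references, so while you have supplied more than the paper does, your route is the canonical one rather than a genuinely different approach.
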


\subsection{Proofs}
We now prove Propositions~\ref{lower:transitive}
and~\ref{lower:acyclic} and Lemmas~\ref{lemma:useJanson}
and~\ref{lemma:Kr:everywhere}.

\begin{proof}[Proof of Proposition~\ref{lower:transitive}]
  We will basically repeat the proof strategy of
  Proposition~\ref{lower:triangles}; namely, we will count the number
  of edges which are contained in a ``dangerous $K_r$'', that is, a
  $K_r$ containing two $a$-short edges incident to a single
  vertex. This time, we will replace Markov's
  inequality by an application of the second moment method.

  Set $a = n^{3-r} p^{1 - \binom{r}{2}}/4r^2$, and observe that the
  bounds on $p$ imply $n \gg a \geq p^{-2}/4r^2$. Note that any
  non-transitive copy of $K_r$ contains a cyclic triangle, and
  therefore contains two $a$-short edges sharing a vertex. Let
  $X$ be a random variable counting the number of such $K_r$s in
  $G(n,p)$. By first choosing the vertex to which the two $a$-short edges
  are incident, then choosing the other endpoints of those edges, then
  choosing the remaining $n-3$ vertices arbitrarily, we obtain
  \[ \mathbb{E}(X) \leq n \binom{2a}{2} \binom{n-3}{r-3} p^{\binom{r}{2}}
    \leq \frac{pan}{2r^2}. \]
  The above estimate overcounts by considering three of the vertices
  as special. Compensating for this, we obtain
  \[ \mathbb{E}(X) \geq n^{r-2}a^2p^{\binom{r}{2}}/r^r \geq \frac{pan}{4r^{r+2}}. \]
  We are done if we show that $X \leq pan/r^2$
  with high probability, because forcing every edge from each of the
  $X$ copies of $K_r$ to be oriented forward leaves at least $pan/2$
  free edges.

  All that is left is a routine application of the second moment
  method, the details of which we include for completeness. Note first
  that $\mathbb{E}(X) = \Omega(pan) = \Omega(n/p) \gg 1$. In order to
  compute $\Var(X)$, we consider pairs $(R_1, R_2)$ of
  edge-intersecting copies of $K_r$ in $K_n$, each of which having two
  $a$-short edges sharing a vertex. We split into two cases:
  \begin{enumerate}
  \item To count pairs with $|R_1 \cap R_2| = 2$, we choose vertices
    one by one and observe that the restriction on $a$-short edges
    implies that three elements of $R_1 \cup R_2$ must be at distance
    at most $a$ from previous vertices. Therefore, the contribution to
    the variance from those pairs is
    $\Theta\left(n^{2r-5} a^3 p^{2\binom{r}{2}-1}\right) =
    \Theta\left({\left(\mathbb{E}(X)\right)}^2/pan\right)$.
  \item If $|R_1 \cap R_2| = r'$ for $3 \leq r' \leq r-1$, the
    contribution to the variance, which we will denote by $V_{r'}$, is
    $\Theta\left(n^{2r-r'-2} a^2
      p^{2\binom{r}{2}-\binom{r'}{2}}\right)$. Since $V_{r'+1}/V_{r'}
    = \Theta\left(n^{-1}p^{-r'}\right)$, we can conclude that $V_{r'}$
    is unimodal for this range of $p$. It thus suffices to bound
    the cases $r' = 3$ or $r' = r-1$. For those values,
    ${\left(\mathbb{E}(X)\right)}^2/V_3 = \Theta(na^2p^3) \gg pan \gg
    1$ and
    ${\left(\mathbb{E}(X)\right)}^2/V_{r-1} = \Theta(a p^{2-r}) \gg
    1$.
  \end{enumerate}
  Therefore, the variance is a sum of $r-2$ terms, each of which being
  $o\left({\left(\mathbb{E}(X)\right)}^2\right)$. By Chebyshev's
  inequality, $X = \Theta\left(\mathbb{E}(X)\right)$, as desired.
\end{proof}

To prove Proposition~\ref{lower:acyclic} we will need the following lemma of
Goddard, Kenyon, King and Schulman~\cite{GKKS}.

\begin{lemma}[\cite{GKKS}, Theorem 2.5]\label{lemma:acyclic}
Let $G$ be a graph. Then the number of acyclic orientations of $G$ is at least
\[ \prod_{v \in V(G)} {\Big( \big( d(v) + 1 \big)! \Big)}^{1/\left(d(v) + 1\right)}. \]
\end{lemma}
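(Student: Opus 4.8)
The plan is to prove the lemma by a direct counting argument, building up acyclic orientations one vertex at a time and then optimising over the order in which the vertices are added. Write $\mathrm{ac}(G)$ for the number of acyclic orientations of $G$; after taking logarithms, the assertion is that
\[ \log \mathrm{ac}(G) \ \ge\ \sum_{v \in V(G)} \frac{1}{d(v)+1}\log\big((d(v)+1)!\big). \]

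The heart of the argument is the following deterministic claim: if $G'$ is obtained from a graph $H$ by adding one new vertex $v$ adjacent to exactly $k$ vertices of $H$, then \emph{every} acyclic orientation $\vec{H}$ of $H$ extends to at least $k+1$ acyclic orientations of $G'$. To see this, fix a topological ordering $w_1, \dots, w_m$ of $\vec{H}$ (so that every edge points forwards), and for each $t \in \{0, 1, \dots, m\}$ let $\vec{G}'_t$ be the orientation of $G'$ obtained by placing $v$ so that exactly $w_1, \dots, w_t$ precede it and orienting the edges at $v$ forwards; each $\vec{G}'_t$ is acyclic, since it respects a linear ordering, and restricts to $\vec{H}$. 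As $t$ runs from $0$ to $m$ the in-degree of $v$ rises from $0$ to $k$, increasing by $1$ at step $t$ if $w_t$ is a neighbour of $v$ and staying the same otherwise, so it attains every value in $\{0, \dots, k\}$; hence the $\vec{G}'_t$ include at least $k+1$ pairwise distinct extensions of $\vec{H}$. Since every acyclic orientation of $G[\{v_1,\dots,v_i\}]$ restricts to one of $G[\{v_1,\dots,v_{i-1}\}]$, telescoping this claim along any ordering $v_1, \dots, v_n$ of $V(G)$ gives, for every such ordering,
\[ \mathrm{ac}(G) \ \ge\ \prod_{i=1}^n (1 + k_i), \qquad k_i := \big|N(v_i) \cap \{v_1, \dots, v_{i-1}\}\big|. \]

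To conclude, I would take a uniformly random ordering $\sigma = (v_1, \dots, v_n)$ of $V(G)$: the quantity $\log \mathrm{ac}(G)$ is a constant, so it is at least the expectation of the random lower bound $\sum_{i} \log(1+k_i)$. Re-indexing the sum by vertex, $\sum_i \log(1+k_i) = \sum_{v} \log(1 + B_v)$, where $B_v$ denotes the number of neighbours of $v$ that precede $v$ in $\sigma$; conditioning on the relative order in $\sigma$ of the $d(v)+1$ vertices of $\{v\}\cup N(v)$ shows that $B_v$ is uniform on $\{0, 1, \dots, d(v)\}$, so
\[ \mathbb{E}\big[\log(1+B_v)\big] = \frac{1}{d(v)+1}\sum_{m=1}^{d(v)+1}\log m = \frac{1}{d(v)+1}\log\big((d(v)+1)!\big). \]
Summing over $v$ and exponentiating yields the lemma.

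I expect the only real obstacle to be the deterministic extension claim, and within it the verification that the orientations $\vec{G}'_t$ are genuinely distinct: this is where one uses that the in-degree of $v$ is monotone along the sliding construction, so that the whole value set $\{0,\dots,k\}$ is realised and forces at least $k+1$ distinct orientations of the edges at $v$. The remaining ingredients — the telescoping over an arbitrary insertion order, and the ``a constant dominates the mean of a random variable'' step — are routine; in effect, averaging over all insertion orders here plays the role that optimising a threshold parameter plays in the lower-bound constructions elsewhere in the paper, and already produces the geometric mean $\prod_v ((d(v)+1)!)^{1/(d(v)+1)}$.
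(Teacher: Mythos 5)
The paper does not actually prove this lemma---it is quoted without proof from Goddard, Kenyon, King and Schulman, so there is no internal argument to compare against. Your argument is correct and self-contained. The crux, as you note, is the extension claim: inserting the new vertex $v$ into a fixed topological ordering $w_1,\dots,w_m$ of $\vec H$ at each of the $m+1$ slots and orienting the edges at $v$ forwards does produce acyclic orientations restricting to $\vec H$, and since the in-degree of $v$ increases from $0$ to $k$ in unit steps as the insertion position slides right, all values in $\{0,\dots,k\}$ occur and give pairwise distinct orientations. The telescoping over an arbitrary insertion order of $V(G)$, and the passage to a uniformly random ordering---where $B_v$ is uniform on $\{0,\dots,d(v)\}$ because the position of $v$ within $\{v\}\cup N(v)$ is uniform---are both routine and correctly carried out, yielding $\mathbb{E}[\log(1+B_v)] = \frac{1}{d(v)+1}\log\big((d(v)+1)!\big)$ and hence the stated geometric-mean bound. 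This is essentially the argument in the cited source.
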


Since $d(v) = (1+o(1))pn$ with high probability whenever
$p \gg (\log n)/n$, Proposition~\ref{lower:acyclic} follows from
Lemma~\ref{lemma:acyclic} and a standard calculation using
Stirling's approximation.

    \begin{proof}[Proof of Proposition~\ref{lower:acyclic}]
    We will use the following weak version of Stirling's inequality,
    valid for every $n \geq 1$.
    \[ (2\pi n)^{1/2} \left(\frac{n}{e}\right)^n \leq n! \leq e\sqrt{n} \left(\frac{n}{e}\right)^n. \] Let $x_v = d(v) + 1$.
    Lemma~\ref{lemma:acyclic}, together with the fact that $x^{1/x}$
    is monotone decreasing for $x \geq 3$, implies that there are at
    least
    \[ \prod_{v \in V} \left(x_v!\right)^{1/x_v} \geq \prod_{v \in V}
      \frac{x_v(2\pi x_v)^{1/2x_v}}{e} \geq \prod_{v \in V} \frac{x_v
        \cdot (2n)^{1/2n} \pi^{1/2x_v}}{e} \] acyclic orientations of
    a graph $G$. In the $G(n,p)$ case, for $p \gg (\log n)/n$ it
    holds that $d(v) = (1+o(1))pn$ for every $v$ with high
    probability.  Therefore, the number of acyclic orientations of
    $G(n,p)$ is at least
    \[ \frac{ (1+o(1))^n p^n n^n \cdot \sqrt{2n}
        \pi^{1/4p}}{e^n} = p^n n!\,/\,e^{o(n)}, \] as desired.
  \end{proof}

  We remark that a simple argument of Manber and Tompa~\cite{ManberTompa} shows
  that the number of acyclic orientations of any graph $G$ is at most $\prod_{v
    \in V(G)} (d(v)+1)$. For random graphs, Reidys~\cite{Reidys} showed
  stronger upper and lower bounds that hold with high probability, as well as
  concentration of the logarithm of the number of acyclic orientations around
  its expectation.

  We now proceed to the proofs of Lemmas~\ref{lemma:useJanson}
  and~\ref{lemma:Kr:everywhere}.
  
  \begin{proof}[Proof of Lemma~\ref{lemma:useJanson}.]
    Recall that
    \begin{equation*}
t_r(n,p) :=
 \left\{\begin{array}{cl}
Cp^{2 - \binom{r}{2}} n^{3-r} \log n & \text{ if } p \le n^{-2/(r+2)},\\
C (\log n)/p & \text{ if } p > n^{-2/(r+2)}.
\end{array}\right.
\end{equation*}
    Fix $v \in [n]$ and $T \subset N(v)$ with $|T|=t$. Let $\mathcal{S}$ be the family of possible vertex sets for copies of $K_r$ satisfying the conclusion of the lemma, that is,
    \[ \mathcal{S} = \left\{S \in \binom{[n]}{k} : v \in S \text { and
        } |S \cap T| = 2 \right\}. \]
    Since $T \subset N(v)$, we may assume
    $G(n,p)$ contains all edges between $v$ and vertices in
    $T$. Therefore, for a given $S \in \mathcal{S}$, the edges that
    must be in $G(n,p)$ for it to form a copy of $K_r$ are
    \[ F(S) = \binom{S}{2} \setminus \left(\{v\} \times T\right). \]
    We will use Janson's inequality to show that \begin{equation}\label{eq:applied_janson} \mathbb{P}(F(S)
    \not\subset E(G(n,p))\text{ for every }S \in \mathcal{S}) \leq
    \exp(-2t \log n).
  \end{equation}
  This requires bounding the parameters
    $\mu$ and
    $\Delta$ in~\eqref{eq:defn_mu_delta}.
    We start with bounding $\mu$ to obtain
    \begin{equation}\label{eq:bound_mu}
      \mu = \sum_{S \in \mathcal{S}} p^{|F(S)|} = \binom{n-1-t}{r-3} \binom{t}{2}
      p^{\binom{r}{2} - 2}\geq \frac{Ct \log n}{r!},
    \end{equation}
    where in the last inequality we used the definition of $t$. For
    bounding $\Delta$, we will need to consider pairs of copies of
    $K_r$ which intersect. To analyze the possible intersections, let
    \[ \mathcal{K}(b,c) = \left\{ (S_1, S_2) \in \mathcal{S}^2 :
        |(S_1 \cup S_2) \cap T| = b, |S_1 \cap S_2| = c \text{ and }
        F(S_1) \sim F(S_2) \right\} \]
    be the family of pairs of copies of $K_r$ which contribute to
    $\Delta$, and note that
      $\mathcal{K}(b,c)$ is always empty unless $2 \leq b \leq
      4$, due to the definition of $\mathcal{S}$, and unless $2 \leq c \leq
      r-1$, due to the definition of the relation
      `$\sim$'. We can thus write
      \begin{equation} \label{defn:delta_bc}
        \Delta(b,c) = \sum_{(S_1, S_2) \in \mathcal{K}(b,c)}
        p^{|F(S_1) \cup F(S_2)|}
      \end{equation}
      so that $\Delta = \sum_{b=2}^4
      \sum_{c=2}^{r-1} \Delta(b, c)$. Having defined $\mu$ and
      $\Delta$, our main technical goal
    will be to bound $\min(\mu/2, \mu^2/2\Delta)$, which will take
    several steps. First, to get a more explicit
    expression for $\Delta(b, c)$, we will use the following simple fact.
      \begin{claim}\label{claim:kbc}
        $|\mathcal{K}(b,c)| \leq \binom{t}{b} \binom{n-1-t}{2r-b-c-1}
        \cdot 3^{2r}.$
      \end{claim}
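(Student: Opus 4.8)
The plan is to prove Claim~\ref{claim:kbc} by a direct counting argument: choose the common vertices of $S_1$ and $S_2$ first, then the ``private'' vertices of each clique, and control the number of ways these choices can fall inside or outside $T$. First I would fix $(S_1,S_2)\in\mathcal{K}(b,c)$. By definition $v\in S_1\cap S_2$, so among the $c$ vertices of $S_1\cap S_2$ one is $v$; I will count the remaining $c-1$ common vertices, the $r-c$ private vertices of $S_1$, and the $r-c$ private vertices of $S_2$, for a total of $(c-1)+2(r-c)=2r-c-1$ vertices to place in $[n]\setminus\{v\}$. Because $|(S_1\cup S_2)\cap T|=b$ and $v\notin T$, exactly $b$ of these $2r-c-1$ vertices lie in $T$; choosing which of them lie in $T$ costs at most $2^{2r-c-1}\le 2^{2r}$, and in fact we can be cruder and just bound the number of ``patterns'' (which of the $2r-c-1$ slots is in $T$, and for each slot whether it is common/private-to-$S_1$/private-to-$S_2$) by $3^{2r}$.

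The main step is then: once the pattern is fixed, the $b$ vertices destined for $T$ are chosen from the $t=|T|$ elements of $T$, giving at most $\binom{t}{b}$ choices, and the remaining $2r-b-c-1$ vertices are chosen from $[n]\setminus(\{v\}\cup T)$, a set of size $n-1-t$, giving at most $\binom{n-1-t}{2r-b-c-1}$ choices. (Here $2r-b-c-1$ could conceivably be negative for small $r$ and large $b,c$, but in that regime $\mathcal{K}(b,c)$ is empty, so the stated bound holds vacuously; I would note this in passing.) Multiplying the per-pattern bound by the number of patterns gives
\[
  |\mathcal{K}(b,c)| \;\le\; 3^{2r}\binom{t}{b}\binom{n-1-t}{2r-b-c-1},
\]
which is exactly the claimed inequality after reordering the factors.

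The only mild obstacle is bookkeeping: making sure that the decomposition of $S_1\cup S_2$ into $\{v\}$, the $c-1$ other common vertices, the $r-c$ private vertices of $S_1$ and the $r-c$ private vertices of $S_2$ is exhaustive and non-overlapping, and that the constraint $F(S_1)\sim F(S_2)$ (i.e.\ the two edge sets share an edge) does not need to be used beyond guaranteeing $c\ge 2$ — it does not, since dropping it only enlarges $\mathcal{K}(b,c)$, so the bound remains valid. I expect this to be a short, routine argument; the generous constant $3^{2r}$ is deliberately wasteful so that no careful optimisation of the pattern count is needed, and it is all that the subsequent estimate of $\min(\mu/2,\mu^2/2\Delta)$ requires.
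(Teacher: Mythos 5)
Your proof is correct and follows essentially the same route as the paper's: decompose $S_1\cup S_2$, note that $|S_1\cup S_2|=2r-c$ with $v$ always present, count the vertex choices by $\binom{t}{b}\binom{n-1-t}{2r-b-c-1}$, and absorb the assignment of each vertex to $S_1$ only, $S_2$ only, or both into a factor $3^{2r}$. One small expository remark: the paper first chooses the \emph{set} $S_1\cup S_2$ (so the $T$-membership of each vertex is already determined by whether it was drawn from $T$ or from $[n]\setminus(T\cup\{v\})$), and only afterwards assigns each of the at most $2r$ vertices one of three roles; your ``pattern'' notion mixes in the $T$-membership of the slots, which is redundant once the vertices are chosen and, taken literally, would give $6$ rather than $3$ choices per slot. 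Ordering the steps as vertices first, roles second (as the paper does) removes that awkwardness and makes the $3^{2r}$ bound immediate; your observations about the vacuous case $2r-b-c-1<0$ and about not needing the full force of $F(S_1)\sim F(S_2)$ are both correct and harmless additions.
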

      \begin{proof}
        To estimate
        $|\mathcal{K}(b,c)|$, we will count the possible ways of
        choosing $S_1 \cup
        S_2$ then choose whether each element goes in $S_1$,
        $S_2$, or both. To form $S_1 \cup S_2$, we include
        $v$, then $b$ elements from
        $T$, then the remaining elements from outside
        $T$. A moment's thought reveals that $|(S_1 \cup S_2)
        \setminus (T \cup \{v\})| = (2r - c) - (b +
        1)$, implying the desired bound.
  \end{proof}
  Note that some simple counting shows that $|F(S_1) \cup F(S_2)| =
  2\binom{r}{2} - \binom{c}{2} - b$ for every $(S_1, S_2) \in
  \mathcal{K}(b,c)$. Applying this and Claim~\ref{claim:kbc} to~\eqref{defn:delta_bc}, we obtain
  \begin{equation}\label{eq:delta_bc}
      \Delta(b, c) \leq \binom{t}{b}\binom{n-1-t}{2r - c - b  - 1}
      3^{2r} p^{2\binom{r}{2} - \binom{c}{2} - b} =: D(b,c)
      \end{equation}
      To understand the behavior of $\Delta(b,c)$, we will make a series of claims.
      \begin{claim}\label{claim:invalid_bc}
        The sets $\mathcal{K}(2,2)$ and $\mathcal{K}(3,2)$ are empty.
      \end{claim}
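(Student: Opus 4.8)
The plan is to argue directly from the definitions of $\mathcal{S}$, $\mathcal{K}(b,c)$, and the relation $\sim$, with essentially no computation. Recall that every $S \in \mathcal{S}$ contains the fixed vertex $v$ and exactly two vertices of $T$, and that $v \notin T$ since $T \subseteq N(v)$. The one elementary fact I would record first is that for any pair $(S_1, S_2) \in \mathcal{S}^2$ with $|(S_1 \cup S_2) \cap T| = b$ we have, by inclusion--exclusion on $T$, $|S_1 \cap S_2 \cap T| = |S_1 \cap T| + |S_2 \cap T| - b = 4 - b$, and hence (since $v \in S_1 \cap S_2$ and $v \notin T$) $|S_1 \cap S_2| \ge 1 + (4 - b) = 5 - b$.

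For $\mathcal{K}(2,2)$ this fact already suffices: with $b = 2$ it gives $|S_1 \cap S_2| \ge 3$, contradicting $c = |S_1 \cap S_2| = 2$, so $\mathcal{K}(2,2) = \emptyset$. For $\mathcal{K}(3,2)$ one has $b = 3$, so $|S_1 \cap S_2 \cap T| = 1$, say $S_1 \cap S_2 \cap T = \{w\}$; together with $v \in S_1 \cap S_2$, $v \ne w$, and $c = 2$ this forces $S_1 \cap S_2 = \{v, w\}$ exactly. Now any edge in $F(S_1) \cap F(S_2)$ has both endpoints in $S_1 \cap S_2$, so it must be the edge $vw$; but $w \in T$, so $vw \in \{v\} \times T$ and therefore $vw \notin F(S_1)$. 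Hence $F(S_1) \cap F(S_2) = \emptyset$, i.e.\ $F(S_1) \not\sim F(S_2)$, so $(S_1, S_2) \notin \mathcal{K}(3,2)$ and $\mathcal{K}(3,2) = \emptyset$.

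There is no genuine obstacle here; the single point one must not overlook is that $F(S)$ deliberately excludes the ``free'' edges joining $v$ to $T$. In the case $\mathcal{K}(3,2)$ the two clique vertex-sets genuinely overlap in an edge, and it is precisely this exclusion that prevents the pair from contributing to $\Delta$. Accordingly, in the write-up I would invoke the definition $F(S) = \binom{S}{2} \setminus (\{v\} \times T)$ explicitly at that step, rather than merely citing $|S_1 \cap S_2| = 2$.
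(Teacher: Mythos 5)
Your proof is correct and follows essentially the same route as the paper's: both compute $|S_1 \cap S_2 \cap T| = 4 - b$ (the paper implicitly, you via inclusion--exclusion), observe that together with $v$ this forces $c \ge 3$ when $b = 2$, and for $(b,c)=(3,2)$ identify the unique candidate shared edge as $vw$ with $w \in T$, which is excluded from $F(S)$ by definition. The only difference is stylistic—you make the inclusion--exclusion count explicit—so there is nothing substantive to add.
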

      \begin{proof}
        Let $(S_1, S_2) \in \mathcal{K}(b,c)$. If $b =
        2$, then $S_1$ and $S_2$ contain the same two elements $x,
        y$ of $T$. Therefore, $\{v, x, y\} \subset S_1 \cap
        S_2$, and so $c \geq 3$.  If $b = 3$, however, then
        $S_1$ and $S_2$ share a single element $x$ of
        $T$; if we also assume $c = 2$, then the copies of
        $K_r$ over $S_1$ and
        $S_2$ share a single edge, which must be $\{v,
        x\}$. But then $F(S_1) \cap F(S_2) =
        \emptyset$, because edges from $v$ to
        $T$ were excluded from consideration.
      \end{proof}

      We now state two observations which will simplify future
      calculations.
      \begin{claim} $t \ll pn/(\log n)$.\label{claim:t_is_small}
      \end{claim}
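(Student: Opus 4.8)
The plan is to unpack the two-case definition of $t_r(n,p)$ and verify $t_r(n,p) \ll pn/(\log n)$ directly in each regime, using the standing hypothesis $p \gg n^{-2/(r+1)}(\log n)^{4/(r+1)(r-2)}$ and $r \ge 4$ (recall that the sets $T$ over which we union have common size $t = t_r(n,p)$, so the claim is really a statement about $t_r(n,p)$).

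First I would dispose of the range $p > n^{-2/(r+2)}$, where $t_r(n,p) = C(\log n)/p$: here the desired bound is equivalent to $p^2 n \gg (\log n)^2$, and since $r \ge 4$ forces $p > n^{-2/(r+2)} \ge n^{-1/3}$, we get $p^2 n > n^{1/3}$, which is comfortably larger than $(\log n)^2$.

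For the range $p \le n^{-2/(r+2)}$, where $t_r(n,p) = Cp^{2-\binom{r}{2}}n^{3-r}\log n$, I would rearrange $t_r(n,p) \ll pn/(\log n)$ into the equivalent form $p^{\binom{r}{2}-1}n^{r-2} \gg (\log n)^2$. The one computational point worth isolating is the identity $\binom{r}{2}-1 = \tfrac{(r-2)(r+1)}{2}$: raising the hypothesis $p \gg n^{-2/(r+1)}(\log n)^{4/(r+1)(r-2)}$ to the power $\binom{r}{2}-1$, this identity makes the power of $n$ cancel against $n^{r-2}$ exactly, while the exponent of $\log n$ comes out to precisely $2$, giving $p^{\binom{r}{2}-1}n^{r-2} \gg (\log n)^2$ as needed (the constant $C$ being absorbed by $\gg$).

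There is no genuine obstacle: the argument is bookkeeping with exponents, and the only thing requiring care is lining up the arithmetic so that the hypothesis on $p$ — which was tailored for exactly this purpose — produces $(\log n)^2$ cleanly.
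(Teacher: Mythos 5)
Your proof is correct, and it is the same argument the paper intends: the paper's proof of this claim is the one-liner ``This follows from the hypothesis $p \gg n^{-2/(r+1)}(\log n)^{4/(r+1)(r-2)}$,'' and your computation simply carries out the bookkeeping that the paper leaves implicit, in both regimes of $t_r(n,p)$, using the identity $\binom{r}{2}-1 = \tfrac{(r-2)(r+1)}{2}$ exactly as the authors tailored the hypothesis.
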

      \begin{proof} This follows from the hypothesis $p \gg
        n^{-2/(r+1)}{(\log n)}^{4/(r+1)(r-2)}$.
      \end{proof}

      \begin{claim}\label{claim:decreasing_in_b}
        For every $b$ and $c$, it holds that
        $D(b+1,c) \leq D(b,c)$.
      \end{claim}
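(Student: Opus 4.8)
The plan is to estimate the ratio $D(b+1,c)/D(b,c)$ directly from the explicit formula for $D(b,c)$ in~\eqref{eq:delta_bc} and to show that it tends to $0$; in particular it will be smaller than $1$ once $n$ is large.

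First I would record the two elementary identities for consecutive binomial coefficients, namely $\binom{t}{b+1}\big/\binom{t}{b}=(t-b)/(b+1)$ and, writing $N=n-1-t$ and $k=2r-c-b-1$, $\binom{N}{k-1}\big/\binom{N}{k}=k/(N-k+1)$. Substituting these into~\eqref{eq:delta_bc}, cancelling the common factors $3^{2r}$ and $p^{2\binom{r}{2}-\binom{c}{2}}$, and noting that the exponent of $p$ in $D(b+1,c)$ is exactly one less than in $D(b,c)$, one obtains
\[
  \frac{D(b+1,c)}{D(b,c)}
  \;=\; \frac{t-b}{b+1}\cdot\frac{2r-c-b-1}{\,n-t-2r+c+b+1\,}\cdot\frac1p.
\]

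Next I would bound this crudely. Only the ranges $2\le b\le 4$ and $2\le c\le r-1$ are relevant (and in any degenerate case outside these ranges both sides of the claimed inequality vanish, since then the lower binomial index is negative), so $2r-c-b-1$ is a constant depending only on $r$; moreover $t=o(n)$ by Claim~\ref{claim:t_is_small}, so the denominator $n-t-2r+c+b+1$ is $(1-o(1))n$ and in particular positive for large $n$. Hence
\[
  \frac{D(b+1,c)}{D(b,c)} \;\le\; \frac{2r\,t}{(1-o(1))\,np} \;=\; O\!\left(\frac{t}{np}\right).
\]
Finally, Claim~\ref{claim:t_is_small} gives $t\ll pn/\log n$, so $t/(np)\to 0$, and therefore $D(b+1,c)\le D(b,c)$ for all sufficiently large $n$, as claimed.

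There is no genuine obstacle here: the statement reduces to the one-line identity above together with the crude bound, and the only point requiring a little care is the index bookkeeping in the binomial-coefficient ratios together with the observation that $n-t-2r+c+b+1>0$ for large $n$, which holds because $2r-c-b-1$ is bounded and $t=o(n)$.
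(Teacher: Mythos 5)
Your proof is correct and takes essentially the same approach as the paper, which simply records that $D(b+1,c)/D(b,c) = \Theta(t/pn) \ll 1$ by Claim~\ref{claim:t_is_small}. You have merely spelled out the binomial-ratio computation and the bounded ranges of $b$ and $c$ that justify that $\Theta$-estimate.
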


      \begin{proof}
        $D(b+1,c)/D(b,c) = \Theta(t/pn) \ll 1$, by Claim~\ref{claim:t_is_small}.
      \end{proof}

      We can now state and prove our main technical claim for proving
      Lemma~\ref{lemma:useJanson}. In the right-hand side of the
      inequality below, the denominator is sub-optimal for simplicity
      of proof. It is shown only to make the dependence on $C$ explicit.

    \begin{claim}\label{claim:mu_delta}
      The following relation holds between $\mu$ and $\Delta$.
      \[ \min\left\{\frac{\mu}{2}, \frac{\mu^2}{2\Delta}\right\} >
        \frac{C}{6r \cdot 3^{2r} \cdot (r!)^2} \cdot t \log n.
      \]
    \end{claim}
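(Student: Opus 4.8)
The plan is to establish the claim by reducing it, via the bound $\mu\ge Ct\log n/r!$ from~\eqref{eq:bound_mu}, to the single inequality $\Delta<\frac{3r\cdot 3^{2r}(r!)^2}{C\,t\log n}\,\mu^2$; the point to watch is that one must keep the factor $\mu^2$ here and \emph{not} replace $\mu$ by its lower bound, since $\mu$ can be far larger than $Ct\log n$ when $p$ is large and the naive substitution loses a factor of~$C$. First I would dispose of the term $\mu/2$ in the minimum: by~\eqref{eq:bound_mu} and the trivial inequality $2\,r!<6r\cdot 3^{2r}(r!)^2$,
\[
\frac{\mu}{2}\;\ge\;\frac{Ct\log n}{2\,r!}\;>\;\frac{C}{6r\cdot 3^{2r}(r!)^2}\,t\log n .
\]
Since $\frac{\mu^2}{2\Delta}>\frac{C}{6r\cdot 3^{2r}(r!)^2}t\log n$ is equivalent to $\Delta<\frac{3r\cdot 3^{2r}(r!)^2}{C\,t\log n}\mu^2$, the claim reduces to this last inequality.

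Next I would bound $\Delta$ using the structural facts already proved. Writing $\Delta=\sum_{b=2}^{4}\sum_{c=2}^{r-1}\Delta(b,c)$ with $\Delta(b,c)\le D(b,c)$ by~\eqref{eq:delta_bc}, Claim~\ref{claim:invalid_bc} gives $\Delta(2,2)=\Delta(3,2)=0$, while Claim~\ref{claim:decreasing_in_b} shows $b\mapsto D(b,c)$ is non-increasing, so $\sum_{b=2}^{4}\Delta(b,c)\le 3D(2,c)$ for each $c\ge 3$. Hence
\[
\Delta\;\le\;D(4,2)\;+\;3\sum_{c=3}^{r-1}D(2,c),
\]
a combination of the quantities $D(4,2)$ and $D(2,3),\dots,D(2,r-1)$ with total weight $3r-8<3r$. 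It therefore suffices to show that each of $D(4,2)$ and $D(2,c)$ for $3\le c\le r-1$ is at most $\frac{3^{2r}(r!)^2}{C\,t\log n}\,\mu^2$.

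For this last step I would expand, using~\eqref{eq:bound_mu} and~\eqref{eq:delta_bc},
\[
\frac{D(b,c)}{\mu^2}\;=\;3^{2r}\cdot\frac{\binom{t}{b}}{\binom{t}{2}^{2}}\cdot\frac{\binom{n-1-t}{2r-b-c-1}}{\binom{n-1-t}{r-3}^{2}}\cdot p^{\,4-\binom{c}{2}-b},
\]
and use $t\gg 1$ together with $t\ll n$ (the latter being Claim~\ref{claim:t_is_small}) to replace each binomial quotient, up to a factor $1+o(1)$, by an explicit $r$-dependent constant times the matching monomial in $t$ and~$n$. For $(b,c)=(4,2)$ this gives $D(4,2)/\mu^2=\Theta_r\big(1/(np)\big)$, so the required inequality amounts to $Ct\log n=O_r(np)$, which follows from Claim~\ref{claim:t_is_small} for $n$ large. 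For $(b,c)=(2,c)$ with $3\le c\le r-1$ it gives $D(2,c)/\mu^2=\Theta_r\big(n^{3-c}p^{\,2-\binom{c}{2}}/t^2\big)$, so the required inequality amounts to $C\,n^{3-c}p^{\,2-\binom{c}{2}}\log n=O_r(t)$; substituting the two branches of $t_r(n,p)$, this reduces to $n^{r-c}p^{\,\binom{r}{2}-\binom{c}{2}}=O_r(1)$ when $p\le n^{-2/(r+2)}$ and to $n^{3-c}p^{\,3-\binom{c}{2}}=O_r(1)$ when $p>n^{-2/(r+2)}$. The exponent of $p$ is positive in the first case and non-positive in the second, so in both the left-hand side is largest at $p=n^{-2/(r+2)}$, where a short computation shows it equals $n^{(r-c)(3-c)/(r+2)}\le 1$. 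For $c>3$ this exponent is negative, so we are done; for $c=3$ the quantity is identically $1$, and one must keep track of the constants — here $D(2,3)/\mu^2<\frac{3^{2r}(r!)^2}{Ct\log n}$ unwinds to $tp>\frac{2((r-3)!)^2}{(2r-6)!\,(r!)^2}\,C\log n$, which holds since $tp\ge C\log n$ and $(2r-6)!\,(r!)^2>2((r-3)!)^2$.

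The main obstacle is precisely this bookkeeping of constants rather than any conceptual difficulty: because the target $\frac{C}{6r\cdot 3^{2r}(r!)^2}t\log n$ is only linear in $C$ while $\mu^2$ is quadratic in $C$, one cannot afford the lossy bound on $\mu$ when estimating $\mu^2/\Delta$ and must instead control $\Delta/\mu^2$ directly; and among all the terms $D(b,c)$ the only genuinely tight one is $D(2,3)$, where every power of $n$ cancels and the inequality survives only by virtue of the $r$-dependent combinatorial constant exceeding $1$, i.e. by the slack already built into Claim~\ref{claim:t_is_small} and~\eqref{eq:bound_mu}.
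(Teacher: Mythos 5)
Your proof is correct and takes essentially the same route as the paper: it uses the same ingredients (Claims~\ref{claim:invalid_bc}, \ref{claim:t_is_small}, \ref{claim:decreasing_in_b}, together with~\eqref{eq:bound_mu} and~\eqref{eq:delta_bc}) and reduces, term by term, to the same arithmetic inequalities $t\log n\lesssim_r np$ for $(b,c)=(4,2)$ and $C n^{3-c}p^{2-\binom{c}{2}}\log n\lesssim_r t$ for $(b,c)=(2,c)$. The only cosmetic difference is that you verify the latter by substituting the two branches of $t_r(n,p)$ separately, whereas the paper shortcuts this with the common bound $t\ge C(\log n)/p$ combined with $n^{c-3}p^{\binom{c}{2}-3}\ge 1$.
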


      \begin{proof}[Proof of Claim~\ref{claim:mu_delta}]
        By \eqref{eq:bound_mu}, it suffices to bound
        $\mu^2/\Delta$. We will show that
      \[ \mu^2 \geq \frac{C}{3^{2r} \cdot (r!)^2} \cdot t \log n \cdot
        \Delta(b, c) \] for every $b$ and $c$ such that $\mathcal{K}(b,c) \neq \emptyset$. We start
      with $b = 2$ (thus $3 \leq c \leq r-1$, by
      Claim~\ref{claim:invalid_bc}). Using
      inequalities~\eqref{eq:bound_mu}~and~\eqref{eq:delta_bc} and the
      fact that $t \geq C(\log n)/p$, we can
      bound
      \[ \frac{\mu^2}{\Delta(2, c)} \geq \frac{1}{3^{2r} \cdot (r!)^2}
        \cdot t^2 n^{c-3} p^{\binom{c}{2} - 2} \geq \frac{C}{3^{2r}
          \cdot (r!)^2} \cdot t \log n, \] where the last inequality
      used that $n^{c-3} p^{\binom{c}{2} - 3} \geq 1$ for this range
      of~$p$ and~$c$ (for $c \geq 4$, this is equivalent to
      $p \geq n^{-2/(c+2)}$, which is true since $p \gg n^{-2/(r+1)}$
      and $c \leq r-1$). By Claim~\ref{claim:decreasing_in_b}, it
      only remains to check the case $(b,c) = (4,2)$, which also
      satisfies the desired inequality since
      \[ \frac{\mu^2}{\Delta(4, 2)} = \Theta(pn) \gg t \log n, \]
      by Claim~\ref{claim:t_is_small}. Summing over valid values of
      $b$ and $c$ (at most $3r$ of them) and rearranging concludes the
      proof of Case~2, thereby proving the claim.
\end{proof}

By choosing $C$ large enough, Claim~\ref{claim:mu_delta} and Janson's
inequality allow us to deduce \eqref{eq:applied_janson}. Since there
are at most $n$ choices for $v$ and at most $n^t$ choices for
$T$, the conclusion of the lemma holds with high
probability for all $v$ and $T$ by the union bound.
\end{proof}

Now we prove Lemma~\ref{lemma:Kr:everywhere}. Since this is a simpler
application of Janson's inequality than the previous one, we will be
somewhat brief.

\begin{proof}[Proof of Lemma~\ref{lemma:Kr:everywhere}.]
  Fix $r\geq 3$ and let $C$ be sufficiently large. Recall that
  \[ s_r(n, p) =  \left\{\begin{array}{cl}
\frac{C(\log n)^{1/(r-1)}}{p^{r/2}}&\text{if } p \leq (\log
n)^{-2/(r-1)}\\
\frac{C\log n}{p}&\text{if } p > (\log n)^{-2/(r-1)}.
  \end{array}\right.
\]
  Denoting
    $s := s_r(n,p)$, by the union bound over all subsets of $[n]$ of size
    $s$ it is enough to show that
    \begin{equation}\label{eq:strongly:union_bound}
      \mathbb{P}(G(s, p) \not\supset K_r) \ll \binom{n}{s}^{-1}.
    \end{equation}
    We define the random variable $X$ to be the number of $K_r$s in
    $G(s, p)$ and compute the parameters $\mu$ and $\Delta$
    in~\eqref{eq:defn_mu_delta}. We have
    \[ \mu = \binom{s}{r} p^{\binom{r}{2}} \geq \frac{C^{r-1}}{2r!}
      \cdot s \log n.\]
    Now we need to consider interactions between two edge-intersecting
    copies of $K_r$. Partitioning according to the number of vertices
    in their intersection, we conclude that
    \[ \Delta = \sum_{a = 2}^{r-1} \Delta(a) = \sum_{a=2}^{r-1}
      \binom{s}{2r-a} p^{2\binom{r}{2} - \binom{a}{2}}. \]
    The following claim will allow us to use Janson's inequality.
    \begin{claim}\label{claim:Kr_everywhere}We have
      \[\min\left\{\frac{\mu}{2}, \frac{\mu^2}{2\Delta}\right\} >
        \frac{1}{2r} \cdot \min\left\{\frac{C^{r-1}}{2r!}, C\right\} \cdot s \log n,
      \]
    \end{claim}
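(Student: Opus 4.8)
The plan is to treat the two terms $\mu/2$ and $\mu^2/(2\Delta)$ in $\min\{\mu/2,\mu^2/(2\Delta)\}$ separately and show that each exceeds $\frac{1}{2r}\min\{C^{r-1}/(2r!),\,C\}\cdot s\log n$. The first term needs nothing new: the displayed bound $\mu=\binom{s}{r}p^{\binom{r}{2}}\ge\frac{C^{r-1}}{2r!}\,s\log n$, proved just before the claim, gives $\mu/2\ge\frac{C^{r-1}}{4r!}\,s\log n$, which beats $\frac{1}{2r}\cdot\frac{C^{r-1}}{2r!}\,s\log n$ for every $r\ge 2$. So the whole task is to lower-bound $\mu^2/(2\Delta)$.

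Since $\Delta=\sum_{a=2}^{r-1}\Delta(a)$ is a sum of at most $r-2<r$ nonnegative terms, it is enough to bound $\mu^2/\Delta(a)$ from below for each $a$ with $2\le a\le r-1$: dividing by the number of terms and by $2$ then yields the bound on $\mu^2/(2\Delta)$. Now $\mu^2/\Delta(a)=\binom{s}{r}^{2}\binom{s}{2r-a}^{-1}p^{\binom{a}{2}}$, and a short computation with binomial coefficients (for instance via $\binom{s}{2r-a}=\binom{s}{r}\binom{s-r}{r-a}\big/\binom{2r-a}{r}$, together with $s=s_r(n,p)\to\infty$) shows $\mu^2/\Delta(a)=\Theta_r\big(s^{a}p^{\binom{a}{2}}\big)=\Theta_r\big(s\cdot s^{a-1}p^{\binom{a}{2}}\big)$, where $\Theta_r$ hides constants depending only on $r$. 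So everything comes down to the single inequality $s^{a-1}p^{\binom{a}{2}}\ge C\log n$, for all $2\le a\le r-1$.

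The crux is therefore to prove this last inequality, and here I would use the algebraic identity $s^{a-1}p^{\binom{a}{2}}=(sp)\cdot\big(sp^{(a+1)/2}\big)^{a-2}$ together with two facts about $s=s_r(n,p)$. First, $sp\ge C\log n$: this is exactly the assertion $s_r(n,p)\ge C(\log n)/p$, which holds with equality when $p>(\log n)^{-2/(r-1)}$, and holds when $p\le(\log n)^{-2/(r-1)}$ because there $C(\log n)^{1/(r-1)}p^{-r/2}\ge C(\log n)p^{-1}$ is \emph{equivalent} to $p\le(\log n)^{-2/(r-1)}$ — in other words $s$ was calibrated precisely so that $\mu=\Omega(s\log n)$. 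Second, $sp^{(a+1)/2}\ge 1$ for every $a\le r-1$: in the regime $p\le(\log n)^{-2/(r-1)}$ one has $sp^{(a+1)/2}=C(\log n)^{1/(r-1)}p^{(a+1-r)/2}\ge C(\log n)^{1/(r-1)}\ge 1$ since $a+1-r\le 0$ and $p\le 1$, and in the regime $p>(\log n)^{-2/(r-1)}$ one has $sp^{(a+1)/2}=C(\log n)\,p^{(a-1)/2}>C(\log n)^{(r-a)/(r-1)}\ge 1$ since $r-a\ge 1$. As $a-2\ge 0$, the factor $\big(sp^{(a+1)/2}\big)^{a-2}$ is $\ge 1$, and hence $s^{a-1}p^{\binom{a}{2}}\ge sp\ge C\log n$.

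Assembling the pieces, $\mu^2/\Delta(a)=\Omega_r(C\,s\log n)$ for every $a$ in the range, so $\mu^2/(2\Delta)=\Omega_r(C\,s\log n/r)$, and keeping track of the purely $r$-dependent constants — which, exactly as in the remark preceding Claim~\ref{claim:mu_delta}, we present in a deliberately non-optimal form only to make the dependence on $C$ explicit — gives the stated lower bound. I do not anticipate any genuine difficulty: there is no extremal or probabilistic subtlety left, only the bookkeeping needed to see that $s^{a-1}p^{\binom{a}{2}}\ge C\log n$ holds uniformly over $a\in\{2,\dots,r-1\}$ and over both ranges of $p$, the binding case being $a=2$ throughout (the unique value $a=2$ when $r=3$), where the inequality is nothing more than the defining property $sp\ge C\log n$ of $s=s_r(n,p)$.
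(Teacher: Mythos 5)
Your proof is correct, and it takes a noticeably different route from the paper's. The paper proves Claim~\ref{claim:Kr_everywhere} by a case split on the two branches of $s_r(n,p)$: in the regime $p\le(\log n)^{-2/(r-1)}$ it shows $\mu/\Delta(a)=\bigl(sp^{(a+r-1)/2}\bigr)^{a-r}\ge1$ (so $\mu\ge\Delta/r$ and $\mu^2/(2\Delta)\ge\mu/(2r)$, at which point the lower bound $\mu\ge\tfrac{C^{r-1}}{2r!}s\log n$ finishes the case), while in the complementary regime $p>(\log n)^{-2/(r-1)}$ it bounds $\mu^2/\Delta(a)=s\bigl(sp^{a/2}\bigr)^{a-1}\ge C^{a-1}s\log n$ directly. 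You instead bound $\mu^2/\Delta(a)$ uniformly, via the factorization $s^{a-1}p^{\binom{a}{2}}=(sp)\cdot\bigl(sp^{(a+1)/2}\bigr)^{a-2}$, reducing the whole thing to the two scalar facts $sp\ge C\log n$ (which is exactly the calibration built into $s_r(n,p)$) and $sp^{(a+1)/2}\ge1$ for $2\le a\le r-1$; the case split on the definition of $s$ then only appears in verifying these two elementary inequalities. This is a cleaner organization: it isolates $a=2$ as the binding case and exposes that the estimate is really just the defining property of $s$. It buys you a single argument in place of two, at the cost of a slightly worse (but equally sufficient) bound in the small-$p$ regime, where the paper's $\mu\ge\Delta(a)$ approach gives $\mu^2/(2\Delta)\gtrsim\tfrac{C^{r-1}}{2r!}s\log n$ rather than $\gtrsim Cs\log n$. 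Both versions defer the $r$-dependent constant bookkeeping (your $\Theta_r$ hides the binomial-coefficient ratios); this is the same deliberate non-optimality the paper flags before Claim~\ref{claim:mu_delta}, so it is in keeping with the intended precision of the statement.
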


    \begin{proof}
    We split into two cases corresponding to the definition of
      $s$.

      \vspace{0.3cm}
\noindent\textbf{Case 1: $p \le (\log n)^{-2/(r-1)}$}. We will show that $\mu \geq
\Delta/r$. We can bound
\begin{align*}
  \frac{\mu}{\Delta(a)} \geq \frac{s^r \cdot p^{\binom{r}{2}}}{s^{2r-a}
                          \cdot p^{2\binom{r}{2} - \binom{a}{2}}} &=
                          {\left(s \cdot p^{(a+r-1)/2}\right)}^{a-r}\\
  &\geq \left(\left(\log n\right)^{1/(r-1)} p^{(a-1)/2}\right)^{a-r} \geq \left(\log
    n\right)^{(2-a)(a-r)/(r-1)} \geq 1,
\end{align*}
where, in the second line, the first inequality follows from the definition of
$s$, the second from the bound on $p$ and the last from $2 \leq a \leq
r-1$. We obtain $\mu \ge \Delta(a)$, and summing over $a$ shows that $\mu
\ge \Delta/r$, concluding this case.

      \vspace{0.3cm}
      \noindent\textbf{Case 2: $p > (\log n)^{-2/(r-1)}$}. In this case, we
      need to bound $\mu^2/\Delta$ as well. We will do so by bounding
      $\mu^2/\Delta(a)$ for every $2 \leq a \leq r-1$. Replacing
      $s = C (\log n)/p$ and using the lower bound on $p$ shows that
      \[ \frac{\mu^2}{\Delta(a)} = s \left(s \cdot p^{a/2}\right)^{a-1} \geq
        C^{a-1} \cdot s \cdot (\log n)^{(a-1)(r+1-a)/(r-1)} \geq
        C^{a-1} \cdot s \cdot \log n.\]
      Therefore, $\mu^2 \geq C \cdot s \log n \cdot \Delta(a)$. We can again
      sum over $a$ and rearrange to obtain $\mu^2/\Delta \geq
      (C/r) \cdot s
      \log n$, which proves this case and therefore the claim.
      \end{proof}

      Choosing $C$ large enough, we can apply Janson's inequality and
      conclude~\eqref{eq:strongly:union_bound}, which finishes the
      proof of Lemma~\ref{lemma:Kr:everywhere}.
    \end{proof}

\bibliography{bibliografia}
\end{document}